\numberwithin{equation}{section}
\theoremstyle{plain}
\newtheorem{theorem}{Theorem}[section]
\newtheorem{lemma}[theorem]{Lemma}
\newtheorem{corollary}[theorem]{Corollary}
  \theoremstyle{remark}
\newtheorem{remark}[theorem]{Remark}
  \theoremstyle{definition}
\def\R{\mathbb{R}}
\def\N{\mathbb{N}}
\DeclareMathOperator{\Ker}{Ker}
\DeclareMathOperator{\Rk}{Range}
\DeclareMathOperator{\spann}{span}
\def\eps{\varepsilon}
\def\a{\alpha}
\def\b{\beta}
\def\l{\lambda}
\def\eps{\varepsilon}
\def\Bcal{\mathcal{B}}
\def\Ccal{\mathcal{C}}
\def\Ncal{\mathcal{N}}
\begin{document}

\title[]{Multiple positive solutions of the stationary Keller-Segel system}

\thanks{D.B. is supported by INRIA - Team MEPHYSTO, MIS F.4508.14 (FNRS), PDR T.1110.14F (FNRS) 
\& ARC AUWB-2012-12/17-ULB1- IAPAS. B. Noris is partially supported by the project ERC Advanced Grant  2013 n. 339958: ``Complex Patterns for Strongly Interacting Dynamical Systems - COMPAT''}

\author[D. Bonheure, J.B. Casteras, B. Noris]{Denis Bonheure, Jean-Baptiste Casteras and Benedetta Noris}
\address{D\'epartement de Math\'ematique, Universit\'e libre de Bruxelles, Campus de la Plaine CP
213, Bd. du Triomphe, 1050 Bruxelles, Belgium}
\email{dbonheure@ulb.ac.be}
%
\email{jeanbaptiste.casteras@gmail.com}
%
\email{benedettanoris@gmail.com\\ }

\date{\today }

\begin{abstract}
We consider the stationary Keller-Segel equation 
\begin{equation*}
\begin{cases}
-\Delta v+v=\lambda e^v, \quad v>0 \quad & \text{in }\Omega,\\ 
\partial_\nu v=0 &\text{on } \partial \Omega,
\end{cases}
\end{equation*}
where $\Omega$ is a ball. In the regime $\lambda\to 0$, we study the radial bifurcations and we construct radial solutions by a gluing variational method. For any given $n\in\N_0$, we build a solution having multiple layers at $r_1,\ldots,r_n$ by which we mean that the solutions concentrate on the spheres of radii $r_i$ as $\lambda\to 0$ (for all $i=1,\ldots,n$). A remarkable fact is that, in opposition to previous known results, the layers of the solutions do not accumulate to the boundary of $\Omega$ as $\lambda\to 0$. Instead they satisfy an optimal partition problem in the limit.  
\end{abstract}

\maketitle

\section{Introduction}
One of the simplest mechanisms for aggregation of biological species is chemotaxis. This term refers to a situation where organisms move toward high concentrations of the chemical which they secrete. Keller and Segel \cite{Keller} introduced a basic model in chemotaxis. It is an advection-diffusion system consisting of two coupled parabolic equations which reads as 
\begin{equation}\label{kssys}
\begin{cases}
\dfrac{\partial u}{\partial t}=\Delta u - \nabla \cdot (u\nabla \phi(v)) \quad &\text{in } \Omega ,\\  
\dfrac{\partial v}{\partial t}=D_1 \Delta v-D_2 v+D_3 u \quad &\text{in } \Omega ,\\ 
\partial_\nu u=\partial_\nu v=0 \quad &\text{on } \partial\Omega, \\
u,v>0 &\text{in } \Omega ,
\end{cases}
\end{equation}
where $\Omega \subset \R^N$ is a smooth bounded domain, $D_i$, $i=1,\ldots ,3$ are positive constants and $\phi$ is a smooth strictly increasing function. In the previous system, $u(x,t)$ represents the concentration of the considered organisms and $v(x,t)$ the one of the chemical released. A very important property of this system is the so-called chemotactic collapse. This term refers to the fact that the whole population of organisms concentrate at  a single point in finite or infinite time. The case $\phi (v)=v$ has been studied in great details in the litterature. It is well-known that the chemotactic collapse depends strongly on the dimension of the space. When $N=1$ and $D_i \neq 0$, finite-time blow-up never occurs, whereas it always occurs when $N\geq 3$. The case $N=2$ is critical: if the initial distribution of organisms exceeds a certain threshold, then the solutions may blow-up in finite-time, whereas, if the initial mass is below this threshold, the solutions exists globally (see \cite{MR1610709}, \cite{MR1657160}).
We refer to the two surveys \cite{MR2013508}, \cite{MR2073515} and the references therein for more details.

Steady states of \eqref{kssys} are of basic importance for the understanding of the global dynamics of the system. An important remark is that the static system can be reduced to a scalar equation depending on the function $\phi$. It is easy to check that the steady states satisfy the relation :
$$\nabla\cdot (u \nabla (\log u - \phi (v))=0, $$
which, together with the boundary conditions, implies that $u=C e^{\phi (v)}$ for some positive constant $C$. Therefore, if $\phi (v)=v$, we see that \eqref{kssys} is equivalent to the so-called Keller-Segel equation
\begin{equation}\label{ks}
\begin{cases}
-\varepsilon^2 \Delta v+v=\lambda e^v, \quad v>0 \quad & \text{in }\Omega,\\ 
\partial_\nu v=0 &\text{on } \partial \Omega,
\end{cases}
\end{equation}
with $u=Ce^v$, whereas, if $\phi (v)=\ln v$, one recovers the Lin-Ni-Takagi equation
\begin{equation}\label{lnt}
\begin{cases}
-\tilde{\varepsilon}^2 \Delta v+v=v^p, \quad v>0 \quad & \text{in }\Omega,\\ 
\partial_\nu v=0 &\text{on } \partial \Omega,
\end{cases}
\end{equation}
with $u=Cv$.
In the two previous equations, the constants $\varepsilon,\tilde{\varepsilon},\lambda$ and $p$ are depending on the parameters $D_i$ of the system. A large amount of literature has been devoted to the Lin-Ni-Takagi equation in the case when $N\geq 3$ and $1<p\leq \dfrac{N+2}{N-2}$. This equation has been first consider in \cite{MR929196}. For $1<p<\dfrac{N+2}{N-2}$, Lin, Ni and Takagi showed that a mountain pass, or least energy solution $v_{\tilde{\varepsilon}}$ of equation \eqref{lnt} for $\tilde{\varepsilon}\rightarrow 0$ must behave like
$$v_{\tilde{\varepsilon}}(x)\approx \tilde{\varepsilon}^{\frac{1}{p-1}} V(\sqrt{\tilde{\varepsilon}} (x-x_{\tilde{\varepsilon}})),$$
where $V$ is the unique radial solution of 
\begin{equation}
\label{groundstate}
-\Delta V+V=V^p\ in \ \R^N,\ \lim_{|x|\rightarrow \infty}V(x)=0,
\end{equation}
and where $x_{\tilde{\varepsilon}}$ converges to a point of $\partial \Omega$ which maximizes the mean curvature. Solutions concentrating to one or several interior or boundary points have been obtained in \cite{MR1721719}, \cite{MR1696122}. When $p$ is critical, namely $p=\dfrac{N+2}{N-2}$, the situation is quite different. In this case, concentrating solutions have the following asymptotical behavior when $\tilde{\varepsilon} \rightarrow 0$,
$$v_{\tilde{\varepsilon}}(x)\approx \tilde{\varepsilon}^{-\frac{N-2}{4}} \tilde{V}(\sqrt{\tilde{\varepsilon}} (x-x_{\tilde{\varepsilon}})),$$
where $\tilde{V}$ is the standard bubble, namely the unique (up to scalings and translations) solution of
$$-\Delta \tilde{V}=\tilde{V}^{\frac{N+2}{N-2}}\ in\ \R^N.$$
In this situation, the existence of concentrating solutions depends strongly on the dimension. We refer to the very recent paper \cite{del2015interior} for more details. In all the results described before, the concentration set is zero dimensional. The question of constructing solutions concentrating on higher dimensional sets has been investigated in this last decade. In this direction, Malchiodi, Ni and Wei \cite{MalchiodiNiWei2005} obtained the existence of radial solutions concentrating on an arbitrary number of spheres $\bigcup_{j=1}^k \{|x|=r_j^{\tilde{\varepsilon}}\}$, with $1>r_1^{\tilde{\varepsilon}} >\ldots > r_k^{\tilde{\varepsilon}}>0$, in the case $\Omega = B_1(0)\subset \R^N$ and $p>1$. These solutions are called multi-layers.
An interesting feature of their result is that the radii where the concentration occurs accumulate to the boundary of the domain 
as $\tilde{\varepsilon}\rightarrow 0$. We refer to \cite{MalchiodiMontenegro2002}, \cite{MR2296306}, \cite{MR3274758} for more general constructions (considering non radial domains and more general concentration sets).

  Very recently, the Lin-Ni-Takagi equation in the case $\tilde{\varepsilon}=1$ and $p\rightarrow \infty$ has been investigated. A bifurcation analysis with respect to the parameter $p$ has been done in \cite{BonheureGrumiauTroestler2015}. We want to mention that, when $\Omega =B_1 (0)\subset \R^N$, radial solutions concentrating on spheres have been constructed in \cite{BonheureGrossiNorisTerracini2015}. We will described these solutions more carefully later, but let us notice that, in contrast to the result of Malchiodi, Ni and Wei \cite{MalchiodiNiWei2005}, the spheres where the concentration takes place do not accumulate to the boundary, but either converge to a limit configuration which satisfies an optimal partition problem.
	
Relatively less is known for the Keller-Segel equation \eqref{ks}. To the authors' knowledge, the case where $\varepsilon \rightarrow 0$ and $\lambda=1$ has been only consider in \cite{MR1644794}, where the authors obtained the same kind of results as the ones of Ni and Takagi \cite{MR929196} i.e. they prove that a mountain pass, or least energy solution of \eqref{ks} has to concentrate at a point on the boundary of $\Omega$.
	
In the case when $\varepsilon=1$ and $N=2$, the first existence result has been obtained by Wang and Wei \cite{WangWei2002}, and independently by Senba and Suzuki \cite{MR1769174}. The authors proved that given any positive number $m \in (0, (1+\lambda_1)|\Omega|)\setminus \{4k\pi\}_{k\in \N}$, where $\lambda_1 $ is the first positive eigenvalue of $-\Delta$ with Neumann boundary condition, there exists a non-constant solution of \eqref{ks} (with $\varepsilon=1$ and $N=2$) whose mass satisfies $\int_\Omega v\,dx=m$. 
The asymptotic behavior of the solutions with finite mass as $\lambda \rightarrow 0$ has been characterized by Senba and Suzuki in \cite{MR1769174}. The authors proved that if $v_\lambda$ is a family of solutions of \eqref{ks} (with $\varepsilon=1$ and $N=2$) such that
$$\lim_{\lambda \rightarrow 0} \lambda \int_\Omega e^{v_\lambda}=C_0>0,$$
then $C_0=4\pi (2k+l)$ for some positive integers $k$ and $l$. More precisely, they showed that there exist points $\xi_i \in \Omega$, $i\leq k$ and $\xi_i \in \partial \Omega$, $k<i\leq n$ for which
 $$
 v_\lambda (x)\rightarrow \sum_{i=1}^k 8\pi \mathcal{G} (x,\xi_i)+\sum_{j=k+1}^n 4\pi \mathcal{G}(x,\xi_i),
 $$
uniformly on the compact subsets of $\overline{\Omega}\setminus \{\xi_1,\ldots ,\xi_n\}$, as $\lambda\to0$. Here for $y\in \overline{\Omega}$, $\mathcal{G}(x,y)$ is the Green's function of the problem
$$
-\Delta_x \mathcal{G}+\mathcal{G}=\delta_y \text{ in } \Omega,\quad 
\dfrac{\partial \mathcal{G}}{\partial \nu_x}=0 \text{ on }\partial \Omega .
$$
Moreover, they showed that the n-tuple $(\xi_1 ,\ldots , \xi_n)$ is a critical point of a certain functional depending on the previous Green's function.
The counterpart of this result has been obtained by Del Pino and Wei in \cite{MR2209293}, where the authors construct solutions of \eqref{ks} (with $\varepsilon=1$ and $N=2$) with masses arbitrarily close to $4k\pi$, $k\in\N$. 

In this situation, it is also natural to investigate the existence of solutions concentrating on higher dimensional sets (corresponding to solution with infinite mass). In this direction, Pistoia and Vaira in \cite{PistoiaVaira2015} constructed a family $v_\lambda$ of radial solutions of
\begin{equation}\label{eq:intro_v_lambda}
\begin{cases}
-\Delta v+v=\l e^v, \quad v>0 &\text{in } B_1 \\
\partial_\nu v=0 \quad &\text{on } \partial B_1,
\end{cases}
\end{equation}
blowing-up on all the boundary of $B_1$. Here $B_1=B_1(0)$ is the unitary ball in $\R^N$, $N\geq 2$. 
Their solutions have the following asymptotic behavior
$$\lim_{\lambda \rightarrow 0} \varepsilon_\lambda v_\lambda=\sqrt{2} W,$$
where $\varepsilon_\lambda \approx -\frac{1}{\ln \lambda} $ and $W$ is the unique solution of 
$$-\Delta W+W=0\text{ in } B_1,\quad W=1\text{ on }\partial B_1 ,$$
$C^0$ uniformly on the compact subsets of $B_1$, whereas on $\partial B_1$, up to a  rescaling, they look like the one-dimensional standard bubble 
$$-V^{\prime \prime}=e^V \text{ in }\R , \quad \int_{\R} e^V\,dx<\infty.$$
The construction makes use of the Lyapunov-Schmidt reduction method.
Recently, a similar result has been obtained for a general smooth 2-dimensional domain $\Omega$ in \cite{del2014large}.
 
The aim of the present paper is to investigate the structure of the set of solutions of \eqref{eq:intro_v_lambda}, and to detect, in some cases, properties like a priori bounds, nondegeneracy and asymptotic behaviour as $\lambda\to0$. We discover that, even in the radial case, the set of solutions of \eqref{eq:intro_v_lambda} exhibits a very rich structure. This naturally leads to many new questions and open problems arise.

Let $\l_i^{rad}$ be the $i$-th eigenvalue of the operator $-\Delta+Id$ in $B_1$ with Neumann boundary conditions, restricted to the radial functions. We prove that for every $\lambda<\l_i^{rad}e^{-\l_i^{rad}}$ there exist at least $i-1$ nonconstant solutions of \eqref{eq:intro_v_lambda} ($i\geq2$). The solutions that we find can be divided into two groups: those having a local minimum at the origin, which enjoy some uniform a priori bounds in the form of Lemma \ref{lemma:a_priori_bounds} below, and those having a local maximum at the origin, which present a more singular behaviour. The solution found by Pistoia and Vaira in \cite{PistoiaVaira2015} belongs to the first group. We show that, for every $\lambda<\l_i^{rad}e^{-\l_i^{rad}}$, there exist at least $i$ different solutions of \eqref{eq:intro_v_lambda} belonging to the first group. While the solution in \cite{PistoiaVaira2015} is monotone decreasing, our solutions present an oscillatory behaviour for $i\geq 3$. If we consider the results \cite{MR2209293} 
restricted to the radial case $\Omega=B_1$, the solution found therein has a local maximum at the origin, so that it belongs to the second group. Also for the second group, we prove the existence of many solutions presenting an oscillatory behaviour. Unlike the last mentioned papers, our result holds in any dimension.

Our aim is twofold. In the first part of the paper we perform a bifurcation analysis for the problem \eqref{eq:intro_v_lambda} with respect to the parameter $\lambda$, thus detecting the two groups of solutions mentioned above. In the second part we provide a more constructive characterization of the solutions of the first group, with the purpose of proving some additional properties, such as the nondegeneracy and the asymptotic behaviour as $\lambda\to0$.

Before stating precisely our results, let us start with some observations.  For $\l<1/e$ the equation \eqref{eq:intro_v_lambda} has two constant solutions $\underline{v}_\l<1<\overline{v}_\l$. We let $\mu:=\overline{v}_\l$, so that 
\begin{equation}
\l e^\mu=\mu, \quad \mu>1
\end{equation}
and $\mu\to+\infty$ as $\l\to0$. In order to write the problem in a form more suitable for the bifurcation analysis, we consider the following normalization
\begin{equation}
u:=\frac{v_\l}{\overline{v}_\l}=\frac{v_\l}{\mu}.
\end{equation}
Then problem \eqref{eq:intro_v_lambda} becomes
\begin{equation}\label{eq:intro_u_mu}
\begin{cases}
-\Delta u+u=e^{\mu(u-1)} \quad &\text{in } B_1 \\
\partial_\nu u=0 \quad &\text{on } \partial B_1 \\
u>0 \quad &\text{in } B_1,
\end{cases}
\end{equation}
for $\mu>1$. The equation in this form has the constant solution $u\equiv1$ for every $\mu$. We denote by $\underline{u}_\mu$ the other constant solution, which is characterized by
\begin{equation}\label{eq:underline_u_mu_def}
\underline{u}_\mu=e^{\mu(\underline{u}_\mu-1)}, \quad \underline{u}_\mu<1.
\end{equation}
We are now in position to state our bifurcation result.

\begin{theorem}\label{thm:bifurcation}
For every $i\geq 2$, $(\l_i^{rad},1)$ is a bifurcation point for problem \eqref{eq:intro_u_mu}. Let $\Bcal_i$ be the continuum that branches out of $(\l_i^{rad},1)$. The following holds
\begin{itemize}
\item[(i)] the branches $\Bcal_i$ are unbounded and do not intersect; close to $(\l_i^{rad},1)$, $\Bcal_i$ is a $C^1$-curve;
\item[(ii)] if $u_\mu \in \Bcal_i$ then $u_\mu >0$;
\item[(iii)] each branch consists of two connected components: the component $\Bcal_i^-$, along which $u_\mu(0)<1$, and the component $\Bcal_i^+$, along which $u_\mu(0)>1$;
\item[(iv)] if $u_\mu \in \Bcal_i$ then $u_\mu-1$ has exactly $i-1$ zeros, $u_\mu'$ has exactly $i-2$ zeros and each zero of $u_\mu'$ lies between two zeros of $u_\mu-1$;
\item[(v)] the functions satisfying $u_\mu(0)<1$ are uniformly bounded in the $C^1$-norm.
\end{itemize}
\end{theorem}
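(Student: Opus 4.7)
I would recast \eqref{eq:intro_u_mu} as a fixed-point equation $u = T(\mu, u) := (-\Delta + I)^{-1}\bigl[e^{\mu(u-1)}\bigr]$ on the Banach space $X$ of radial $C^{1,\alpha}(\overline{B_1})$ functions (the inverse is taken with homogeneous Neumann conditions); $T$ is completely continuous by elliptic regularity. The linearization at $u \equiv 1$ is $D_u T(\mu,1)\phi = \mu(-\Delta + I)^{-1}\phi$, so $I - D_u T(\mu,1)$ is singular precisely when $\mu = \l_i^{rad}$. In the radial setting each kernel is one-dimensional, generated by the $i$-th radial Neumann eigenfunction $\phi_i$ of $-\Delta + I$, and a direct computation supplies the Crandall--Rabinowitz transversality condition, producing the local $C^1$-curve claim of (i).

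\textbf{Global continuation, nodal counting, and components.} Rabinowitz's global theorem yields a continuum $\Bcal_i$ that is either unbounded or returns to some other bifurcation point $(\l_j^{rad},1)$. The radial ODE $u'' + \frac{N-1}{r}u' = u - e^{\mu(u-1)}$ is uniquely solvable given the Cauchy datum $u(0)$ (with $u'(0) = 0$ forced by radiality at the regular singular point), and this is the key structural input for (iii), (iv), and non-intersection: a double zero of $u_\mu - 1$ at any $r_0 \in [0,1]$, or the contact $u_\mu(0) = 1$, propagates by uniqueness to $u_\mu \equiv 1$, contradicting $u_\mu \in \Bcal_i$; analogous statements hold for $u_\mu'$, the endpoint $r=1$ being controlled by the Neumann condition. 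Consequently the counts of simple zeros of $u_\mu - 1$ and $u_\mu'$ are locally constant along $\Bcal_i$, and at the bifurcation point they coincide with those of $\phi_i$ and $\phi_i'$: Sturm theory for the Bessel-type eigenfunction gives $i-1$ interior zeros of $\phi_i$, and a Rolle-plus-uniqueness argument forces exactly one zero of $\phi_i'$ in each gap, yielding $i-2$ zeros with the stated interlacing. This proves (iv), yields (iii) because $\phi_i(0) \neq 0$ makes the sign of $u_\mu(0)-1$ well-defined near bifurcation and the uniqueness argument freezes it along each component, excludes $\Bcal_i \cap \Bcal_j \neq \emptyset$ for $i \neq j$ by mismatch of nodal counts, and therefore forces each $\Bcal_i$ to be unbounded, completing (i).

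\textbf{Positivity.} Positivity near the bifurcation follows from $C^1$-proximity to $u \equiv 1$. Globally I would show that the subset of $\Bcal_i$ with $u_\mu > 0$ on $\overline{B_1}$ is both open and closed: if $u_n > 0$ converge to $u \geq 0$ touching zero at an interior point $x_0$, the equation forces $-\Delta u(x_0) = e^{-\mu} > 0$, contradicting the interior-minimum test $\Delta u(x_0) \geq 0$; a boundary zero is ruled out by Hopf's lemma combined with the Neumann condition. This gives (ii).

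\textbf{A priori $C^1$ bound.} For (v) I would exploit the Lyapunov functional $E(r) := \frac{1}{2}(u_\mu'(r))^2 - G(u_\mu(r))$, where $G(s) := \int_0^s (t - e^{\mu(t-1)})\,dt = \frac{s^2}{2} - \frac{1}{\mu}\bigl(e^{\mu(s-1)} - e^{-\mu}\bigr)$. The identity $E'(r) = -\frac{N-1}{r}(u_\mu'(r))^2 \leq 0$ together with $u_\mu'(0) = 0$ gives $G(u_\mu(r)) \geq G(u_\mu(0))$ on $[0,1]$. Since $G$ attains a strict global maximum at $s = 1$ and decreases strictly to $-\infty$ on $[1,\infty)$, the hypothesis $u_\mu(0) < 1$ confines $u_\mu$ below the unique $s^\ast > 1$ solving $G(s^\ast) = G(u_\mu(0))$; the energy inequality then supplies an $L^\infty$ bound on $u_\mu'$, and elliptic regularity closes the full $C^1$ bound. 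The main obstacle I anticipate is making the bound on $s^\ast$ uniform in $\mu$: a direct estimate from the explicit shape of $G$ gives $s^\ast \leq 1 + O(\log\mu/\mu)$ provided $u_\mu(0)$ remains bounded away from $1$, and the degenerate regime $u_\mu(0) \to 1$ must be treated separately by showing the relevant level set of $G$ collapses to $\{1\}$, which forces $u_\mu \to 1$ uniformly and is thus harmless.
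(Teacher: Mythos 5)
Your proposal follows the same skeleton as the paper: Crandall--Rabinowitz at $(\lambda_i^{rad},1)$, a global continuation/alternative argument closed by preservation of the nodal count of $u_\mu-1$ via uniqueness for the radial Cauchy problem, the sign of $\varphi_i(0)$ for (iii), and for (v) exactly the paper's Lyapunov function (the paper's $L(r)$ equals your $E(r)$ up to the additive constant $e^{-\mu}/\mu$). The genuine differences are in two sub-steps, and in both the paper's route is leaner. For (ii), the paper uses no connectedness argument: at an interior minimum where $u<\underline{u}_\mu$ the radial equation gives $u''=u-e^{\mu(u-1)}<0$, a contradiction, so every solution satisfies $u_\mu\ge\underline{u}_\mu$; this is stronger than your $u_\mu>0$ and is re-used immediately afterwards. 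For the critical points in (iv), the paper does not track the number of zeros of $u_\mu'$ along the branch at all: if $u_\mu'(s)=u_\mu'(t)=0$ and $u_\mu-1$ has constant sign on $(s,t)$, integrating $(r^{N-1}u_\mu')'=r^{N-1}\bigl(u_\mu-e^{\mu(u_\mu-1)}\bigr)$ over $(s,t)$ and using $u_\mu\ge\underline{u}_\mu$ gives $0=\int_s^t\bigl(u_\mu-e^{\mu(u_\mu-1)}\bigr)r^{N-1}\,dr\neq0$; combined with Rolle and the count $i-1$ for $u_\mu-1$, this yields exactly $i-2$ critical points and the interlacing in one stroke. Your alternative (local constancy of the $u'$-count anchored at $\varphi_i'$) can be completed, but the points you pass over are precisely the delicate ones: simplicity of interior zeros of $u_\mu'$ requires excluding the constant solution $\underline{u}_\mu$ (your positivity alone does not; the nodal count of $u_\mu-1$ does), and an interior zero of $u_\mu'$ sliding into $r=0$ or $r=1$, where $u_\mu'$ always vanishes, must be ruled out by checking $u_\mu''(0)\neq0$ and $u_\mu''(1)\neq0$ for nonconstant solutions (e.g.\ $Nu_\mu''(0)=u_\mu(0)-e^{\mu(u_\mu(0)-1)}$ and backward uniqueness at $r=1$); ``controlled by the Neumann condition'' is not yet an argument. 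Finally, in (v) your anticipated obstacle is inverted: $u_\mu(0)\to1$ is the harmless regime (the level of your primitive at $u_\mu(0)$ rises to its value at $1$, so $s^*\to1$); what the uniform bound really needs is that this level is bounded below uniformly in $\mu$, which is automatic since the primitive is bounded below on $[0,1]$ by its value at $\underline{u}_\mu\in(0,1)$, hence by an absolute constant. Also, $s=1$ is not the global maximum of your primitive on $[0,\infty)$ when $\mu$ is close to $1$ (its value at $0$ can be larger), though this does not affect the confinement argument, which only uses strict decay to $-\infty$ on $[1,\infty)$.
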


In particular, the functions in $\Bcal_2^-$ are monotone increasing, and they share this property with the solutions constructed in \cite{PistoiaVaira2015}. The functions in $\Bcal_i^-$, for $i\geq3$, satisfy $u(0)<1$, $u''(0)>0$ and oscillate around the constant solution. Up to our knowledge, solutions of this type do not appear in the preexisting literature. In the second part of the paper we will produce,   in a more contructive way, solutions having the same qualitative oscillatory behaviour.

The solutions along $\Bcal_2^+$ are monotone decreasing, as the solutions found in 
\cite{MR2209293} in the case $N=2$. Decreasing solutions of \eqref{eq:intro_v_lambda} in dimension $N\geq3$ never appeard in the literature before, as well as solutions which satisfy $u(0)>1$ and oscillate around the origin. It is an interesting open problem to find solutions of this type by a more explicit constructive approach and to detect their asymptotic behaviour, as well as to obtain more information about the bifurcation branches $\Bcal_i^+$ in dimension $N\geq3$.


Concerning the bifurcation branches, we obtain some additional properties, depending on the dimension.

\begin{theorem}\label{thm:bifurcationNgeq3}
If $N\geq3$, the bifurcation point $(\l_i^{rad},1)$ is transcritical; on the right branch we have $u_\mu(0)<1$, on the left branch we have $u_\mu(0)>1$.
\end{theorem}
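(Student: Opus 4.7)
The plan is a local Lyapunov--Schmidt expansion along the bifurcating branch, combined with an explicit computation of the leading $\mu$-correction. By Theorem~\ref{thm:bifurcation}(i) the branch $\Bcal_i$ is a $C^1$-curve near $(\l_i^{rad},1)$, and since the nonlinearity in \eqref{eq:intro_u_mu} is analytic the curve is in fact smooth. We may thus parametrize it as $s\mapsto(\mu(s),u(s))$ with $\mu(0)=\l_i^{rad}$, $u(0)\equiv 1$ and $u'(0)=\phi_i$, where $\phi_i$ is the $i$-th radial Neumann eigenfunction of $-\Delta+\operatorname{Id}$ on $B_1$, normalized so that $\phi_i(0)>0$ (regular radial eigenfunctions are smooth and nonzero at the origin for any $N\geq 2$). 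The transcriticality claim then reduces to $\mu'(0)\ne 0$ with the correct sign.

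Setting $w=u-1$, equation \eqref{eq:intro_u_mu} becomes
$$
-\Delta w+(1-\mu)w=\frac{\mu^2}{2}w^2+\frac{\mu^3}{6}w^3+O(w^4),
$$
with Neumann boundary conditions. Inserting $\mu(s)=\l_i^{rad}+s\mu_1+O(s^2)$ and $w(s)=s\phi_i+s^2 w_2+O(s^3)$ and matching at order $s^2$ yields
$$
-\Delta w_2+(1-\l_i^{rad})w_2=\mu_1\phi_i+\frac{(\l_i^{rad})^2}{2}\phi_i^2.
$$
The Fredholm solvability condition (take the $L^2$-product with $\phi_i$ and use $(-\Delta+\operatorname{Id})\phi_i=\l_i^{rad}\phi_i$) gives
$$
\mu_1=-\frac{(\l_i^{rad})^2}{2}\,\frac{\int_{B_1}\phi_i^3\,dx}{\int_{B_1}\phi_i^2\,dx}.
$$
Once $\mu_1\ne 0$ is established, transcriticality is immediate: from $\mu(s)-\l_i^{rad}=s\mu_1+O(s^2)$ and $u_\mu(0)-1=s\phi_i(0)+O(s^2)$, the values $s>0$ and $s<0$ produce $\mu$ on opposite sides of $\l_i^{rad}$, and the sign of $u_\mu(0)-1$ on each side is determined by that of $\mu_1\phi_i(0)$. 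To match the statement of the theorem we need $\mu_1<0$, i.e.\ $\int_{B_1}\phi_i^3\,dx>0$.

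The remaining, and main, difficulty is therefore to prove
$$
\int_{B_1}\phi_i^3\,dx>0\qquad\text{for every }i\geq 2\text{ and every }N\geq 3.
$$
I would attack this using the explicit Bessel representation $\phi_i(r)=c_N\,r^{(2-N)/2}J_{(N-2)/2}(k_i r)$ with $k_i=\sqrt{\l_i^{rad}-1}$ and the Neumann condition $\phi_i'(1)=0$, which turns the integral into a multiple of $\int_0^{k_i} s^{(N+2)/2}J_{(N-2)/2}(s)^3\,ds$. The sign should be accessible via standard Bessel integral identities together with the observation that the weight $r^{N-1}$ concentrates the mass near $r=0$, where $\phi_i$ is positive and of order $\phi_i(0)$; this concentration effect strengthens with $N$, which accounts for the restriction $N\geq 3$. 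In dimension $N=2$ the analogous computation does not produce a definite sign in general, which is why a separate treatment would be needed there.
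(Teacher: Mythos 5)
Your expansion is correct and is essentially the paper's argument in disguise: the Crandall--Rabinowitz coefficient computed in the paper is $b=-\tfrac12(\l_i^{rad})^2\int_{B_1}\varphi_i^3\,dx$ (with $\|\varphi_i\|_{L^2}=1$), which is exactly your $\mu_1$ up to normalization, and the sign bookkeeping with $\varphi_i(0)>0$ that yields $u_\mu(0)<1$ on the right branch and $u_\mu(0)>1$ on the left branch is the same as in the paper. So the reduction of transcriticality to the single inequality $\int_{B_1}\varphi_i^3\,dx>0$ is fine.

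The genuine gap is that this inequality — which you yourself identify as ``the remaining, and main, difficulty'' — is never proved. You only sketch a plan (``the sign should be accessible via standard Bessel integral identities''), and the heuristic you offer to support it is incorrect: the measure $r^{N-1}\,dr$ vanishes at the origin and concentrates mass near $r=1$ as $N$ grows, not near $r=0$, so the claimed ``concentration effect'' at the origin does not exist and cannot explain the restriction $N\geq3$. Indeed the paper points out that the inequality is expected (numerically) to hold also for $N=2$; the dimensional restriction comes from the method of proof of the cited result, not from a sign change in dimension $2$. The paper itself does not reprove the inequality but invokes \cite[Lemma 3.2]{BonheureGrumiauTroestler2015}, where $b<0$ is established for $N\geq3$. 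To complete your argument you must either cite that lemma or actually carry out a proof of $\int_0^1 r^{N-1}\varphi_i(r)^3\,dr>0$ for the radial Neumann eigenfunctions; as written, the central step of the theorem is missing.
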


The previous property is based on \cite[Lemma 3.2]{BonheureGrumiauTroestler2015}, which is proved in dimension $N\geq 3$. The authors conjecture, based on numerical simulations, that it should hold also in dimension $2$. 
The following characteristic instead is a feature of the dimension 2.

\begin{theorem}\label{thm:bifurcationN2}
Let $N=2$. For every $\bar{\mu}$ there exists $C>0$ such that any solution $u_\mu$ found in Theorem \ref{thm:bifurcation} satisfies
\begin{equation}
\|u_\mu\|_{L^\infty(B_1)} \leq C \quad\text{ for } 1<\mu\leq\bar{\mu}.
\end{equation}
\end{theorem}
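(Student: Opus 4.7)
The strategy is by contradiction. Suppose $\mu_n\in(1,\bar\mu]$ and solutions $u_n$ of \eqref{eq:intro_u_mu} lying on the branches of Theorem~\ref{thm:bifurcation} satisfy $\|u_n\|_{L^\infty(B_1)}\to\infty$; extract $\mu_n\to\mu_\infty\in[1,\bar\mu]$. Integrating \eqref{eq:intro_u_mu} over $B_1$ against the Neumann condition yields $\int_{B_1}u_n=\int_{B_1}e^{\mu_n(u_n-1)}$, and the convexity inequality $e^t\geq 1+t$ together with $\mu_n>1$ forces $\int_{B_1}u_n\leq|B_1|$; in particular, $\int_{B_1}e^{\mu_n(u_n-1)}\leq|B_1|$ uniformly. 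This $L^1$ bound on the exponential source is the entry point of a two-dimensional Brezis--Merle type blow-up analysis.

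Apply the Brezis--Merle alternative, adapted to our Neumann problem with zeroth-order term $+u$ (in the spirit of the Keller--Segel analyses of \cite{MR1769174} and \cite{MR2209293}): up to a subsequence either $(u_n)$ is locally bounded in $L^\infty(\overline{B_1})$ (contradicting the blow-up hypothesis), or the measures $e^{\mu_n(u_n-1)}dx$ concentrate on a finite nonempty set $S\subset\overline{B_1}$, $u_n\to u_\infty$ in $C^2_{\mathrm{loc}}(\overline{B_1}\setminus S)$, and the weak limit is $\sum_{x_i\in S}\alpha_i\delta_{x_i}+e^{\mu_\infty(u_\infty-1)}dx$ with $\alpha_i>0$, where
\[
-\Delta u_\infty + u_\infty = e^{\mu_\infty(u_\infty-1)} + \sum_{x_i\in S}\alpha_i\delta_{x_i}\quad\text{in }B_1.
\]
The radial invariance of each $u_n$ forces $S$ to be rotation-invariant; a concentration on $\partial B_1$ would have to spread over the whole circle, and a 1D radial rescaling shows this produces divergent two-dimensional mass, contradicting the $L^1$ bound. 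Hence $S=\{0\}$ with a single mass $\alpha_0>0$. To identify $\alpha_0$ I use the standard rescaling $M_n:=u_n(0)\to\infty$, $\varepsilon_n:=(\mu_n e^{\mu_n(M_n-1)})^{-1/2}$, and $v_n(y):=\mu_n(u_n(\varepsilon_n y)-M_n)$; a direct computation gives $-\Delta v_n=e^{v_n}+o(1)$ with $v_n(0)=0$ and $v_n\leq0$, so $v_n\to v$ in $C^2_{\mathrm{loc}}(\R^2)$, where $v$ solves Liouville's equation $-\Delta v=e^v$ on $\R^2$. The Chen--Li classification gives $\int_{\R^2}e^v=8\pi$, and a change of variables converts this into the bubble mass formula $\alpha_0=8\pi/\mu_\infty$.

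The contradiction, which is the crux of the argument, comes from a non-integrability observation. Because $\alpha_0>0$, the Green's function of $-\Delta+\mathrm{Id}$ in $B_1$ (leading behaviour $-(2\pi)^{-1}\log|x|$ in dimension two) forces
\[
u_\infty(x)\sim -\frac{4}{\mu_\infty}\log|x|,\qquad e^{\mu_\infty(u_\infty(x)-1)}\sim e^{-\mu_\infty}|x|^{-4}\quad\text{as }x\to0.
\]
The singularity exponent $4$ is independent of $\mu_\infty$ (the factor $\mu_\infty$ in the exponent cancels the $\mu_\infty^{-1}$ inside $\alpha_0$), and $|x|^{-4}$ is not locally integrable in $\R^2$. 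This contradicts the fact that $e^{\mu_\infty(u_\infty-1)}\in L^1(B_1)$, inherited in the limit from the uniform $L^1$ bound of the first paragraph. Hence blow-up cannot occur and the uniform $L^\infty$ estimate follows. The main obstacle in implementing this scheme is correctly adapting the Brezis--Merle alternative to our Neumann/zeroth-order setting, precisely identifying the bubble mass in the rescaling, and excluding boundary concentration via the radial structure; the two-dimensional cancellation making the singularity exponent $4$ universal is what makes the bound hold for every $\bar\mu$, reflecting the subcritical nature of the exponential nonlinearity in dimension two.
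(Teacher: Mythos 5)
Your overall skeleton (uniform $L^1$ bound on $e^{\mu(u-1)}$, two-dimensional Brezis--Merle concentration analysis, logarithmic lower bound near a concentration point, non-integrability contradiction) is the same as the paper's, which proves exactly this as Theorem \ref{thm:bounds_dim2}. The genuine gap is in how you dispose of concentration at the boundary, and this is precisely the point where the paper uses the hypothesis that $u_\mu$ comes from Theorem \ref{thm:bifurcation}: by point (iv) every such solution has a critical point $r_\mu$ with $u_\mu(r_\mu)<1$, and the monotone quantity $L(r)=\tfrac{(u')^2}{2}-\tfrac{u^2}{2}+\tfrac{e^{\mu(u-1)}}{\mu}$ of Lemma \ref{lemma:a_priori_bounds} (see Remark \ref{remark:boundary_bounds}) then yields uniform $C^1$ bounds on $\overline{B_1\setminus B_r}$, in particular a uniform bound on $\partial B_1$; only then does the interior blow-up analysis of Theorem \ref{thm:bounds_dim2} apply. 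Your proposal never uses the bifurcation structure at all, so you are in effect claiming the bound for arbitrary radial solutions; that is plausible, but then boundary concentration must be excluded by a genuine boundary blow-up analysis (rectifying the boundary as in Wang--Wei), and your one-line justification ``a 1D radial rescaling shows this produces divergent two-dimensional mass'' presupposes that the boundary profile is the one-dimensional bubble, which is exactly what such an analysis would have to establish. In addition, rotation invariance of the blow-up set allows interior spheres $\{|x|=r_0\}$, $0<r_0<1$, which you never exclude; the correct way to rule them out is the standard one (each interior blow-up point carries at least $4\pi$ of the finite limiting mass $\rho$, so the blow-up set cannot contain a continuum, cf.\ Lemma \ref{lemma:Sigma}), and it should be stated.

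Two secondary soft spots: your rescaling at the origin takes $M_n=u_n(0)$ and asserts $v_n\le 0$, which requires $u_n(0)=\max u_n$ -- not justified for the oscillating branches (one must rescale about the maximum point and treat separately the regimes where the maximum stays at scale $\varepsilon_n$ from the origin or escapes it). Moreover the full quantization $\alpha_0=8\pi/\mu_\infty$ via the Liouville classification is more than is needed and carries its own burden (no mass loss between scales): a lower bound on the concentrated mass suffices, since $\rho(\{x_0\})\ge 4\pi$ already forces $u_\infty\ge \tfrac{2}{\mu_\infty}\log\tfrac{1}{|x-x_0|}+O(1)$ and hence $e^{\mu_\infty(u_\infty-1)}\gtrsim |x-x_0|^{-2}\notin L^1$, which is exactly how the paper concludes (through the comparison function $z_n$ and the maximum principle, thereby also avoiding any regularity discussion of the limit near the singularity). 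With the boundary step repaired -- either by importing the Wang--Wei boundary analysis or, more simply, by invoking Theorem \ref{thm:bifurcation}(iv) together with Remark \ref{remark:boundary_bounds} as the paper does -- your argument would close.
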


The second part of this paper is devoted to provide, in a more contructive way, solutions having the same oscillatory behaviour as the ones in $\Bcal_i^-$, $i\geq 2$. First, we build a monotone decreasing  solution by solving a min-max problem. The variational characterization provides more information than the bifurcation approach. In particular, it allows to prove that the monotone solution is nondegenerate as $\mu\to+\infty$ and to analyze its asymptotic behaviour, which is the same as for the solutions in \cite{PistoiaVaira2015}. We also identify the asymptotic behaviour of the oscillating solutions.
In order to do that, we follow closely the method introduced in \cite{BonheureGrossiNorisTerracini2015} where the authors considered the Lin-Ni-Takagi equation when $p\rightarrow \infty$. Before stating precisely our result, let us introduce some notation. Let $G(r,s)$, $s\in (0,1)$, denote the Green function associated to the one dimensional operator
\begin{equation}\label{eq:L_mathcal}
{\mathcal L}: u\mapsto-u''-\frac{N-1}ru'+u,
\end{equation}
for the boundary conditions $u'(0)=u'(1)=0$, that is to say
$$ \mathcal{L} G (\cdot,s)=\delta_s \text{ in } (0,1),\quad \dfrac{\partial G}{\partial r}(0,s)=\dfrac{\partial G}{\partial r}(1,s)=0.$$

\begin{theorem}\label{thm:variational}
Let $k>0$ be an integer. 
\begin{itemize}
\item[(i)] There exists $\bar{\mu}_1(k)$ such that for any $\mu>\bar{\mu}_1(k)$ problem \eqref{eq:intro_u_mu} admits a radial solution having exactly $k$ interior maximum points $0<\a_{1,\mu}<\ldots<\a_{k-1,\mu}<\a_{k,\mu}<1$;
\item[(ii)] There exists $\bar{\mu}_2(k)$ such that for any $\mu>\bar{\mu}_2(k)$ problem \eqref{eq:intro_u_mu} admits a radial solution having exactly $k$ maximum points $0<\a_{1,\mu}<\ldots<\a_{k,\mu}=1$;
\item[(iii)] $(\a_{1,\mu},\ldots,\a_{k,\mu}) \to (\a_1,\ldots,\a_k)$ as $\mu\to\infty$ and $(\a_1,\ldots,\a_k)$ is a critical point of the function
\begin{equation}\label{eq:varphi_def}
\varphi(s_1,\ldots,s_k)=\inf \{ \|u\|_{H^1(B_1)}^2: \, u\in H^1_{rad}(B_1), \, u(s_1)=\ldots=u(s_k)=1 \},
\end{equation}
in the set $0<s_1<\ldots<s_k\leq1$;
\item[(iv)] the solution converges pointwise to $\sum_{j=1}^k A_j G(r,\a_j)$, where $(A_1,\ldots,A_k)$ is a solution of the system 
\end{itemize}
\begin{equation}\label{eq:system_A_j_def}
\sum_{j=1}^k A_j G(\a_i,\a_j)=1, \quad i=1,..,k.
\end{equation}
\end{theorem}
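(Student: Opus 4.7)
The plan is to adapt the gluing variational scheme of \cite{BonheureGrossiNorisTerracini2015}, originally developed for the Lin--Ni--Takagi equation as $p\to\infty$, to the exponential nonlinearity in \eqref{eq:intro_u_mu}. The idea is to use $\vec s=(s_1,\ldots,s_k)$ as reduced parameters playing the role of the spike positions, to build for each admissible $\vec s$ a piecewise candidate function $u_{\vec s,\mu}$ by solving local sub-problems on the annular regions determined by the $s_i$, and then to show that critical points of the reduced energy $\Phi_\mu(\vec s):=J_\mu(u_{\vec s,\mu})$ on an appropriate configuration space yield genuine solutions of \eqref{eq:intro_u_mu} with maxima close to $\vec s$.

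On the sub-interval $I_i=(s_{i-1},s_i)$ (with $s_0=0$, and $s_{k+1}=1$ in case (i)), I would look for a radial solution of $\mathcal L u=e^{\mu(u-1)}$ satisfying $u(s_j)=1$ at each interior endpoint and the natural Neumann condition where applicable. The local functional
\[
J_\mu^{I_i}(u)=\omega_N\int_{I_i}\Bigl[\tfrac12(u')^2+\tfrac12 u^2-\tfrac{1}{\mu} e^{\mu(u-1)}\Bigr]r^{N-1}\,dr
\]
has the constant $u\equiv 1$ as a strict local minimum on $\{u\in H^1(I_i):u(s_j)=1\}$ provided $\mu$ is large enough relative to the length of $I_i$ (the second variation is $\int[(\varphi')^2+(1-\mu)\varphi^2]r^{N-1}\,dr$, positive on $H^1_0(I_i)$ whenever the first Dirichlet eigenvalue of $\mathcal L$ on $I_i$ exceeds $\mu$), while the exponential term allows one to push $J_\mu^{I_i}$ below this level via large perturbations. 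A standard mountain pass then yields a non-constant solution $u_i$ of the ODE with a single interior maximum $\alpha_{i,\mu}$; on the outer intervals not containing a spike I would take $u\equiv 1$. Case (ii) corresponds to attaching the last spike to $r=1$ via Neumann, i.e.\ treating $s_k=1$ with Neumann at the outer endpoint of $I_k$; this accounts for the two thresholds $\bar\mu_1(k)$ and $\bar\mu_2(k)$.

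The heart of the reduction is an identity of the form
\[
\partial_{s_i}\Phi_\mu(\vec s)=c_i(\vec s)\bigl[u_{i+1}'(s_i)-u_i'(s_i)\bigr],\qquad c_i(\vec s)\neq 0,
\]
obtained by differentiating the local energies under the constraint $u(s_i)=1$ and using that each $u_i$ solves the ODE in $I_i$: at an interior critical point $\vec s^{\,*}$ of $\Phi_\mu$ the one-sided derivatives match, so $u_{\vec s^{\,*},\mu}$ is a classical $C^1$ solution of \eqref{eq:intro_u_mu} on $B_1$. To produce such a critical point I would minimize $\Phi_\mu$ on $\Sigma_k=\{0<s_1<\cdots<s_k<1\}$ in case (i), respectively on the subset $\{0<s_1<\cdots<s_{k-1}<s_k=1\}$ in case (ii). Continuity of $\Phi_\mu$ is routine; to ensure that the minimizer does not escape I would show $\Phi_\mu$ blows up when the partition degenerates (some $s_i-s_{i-1}\to 0$, or $s_1\to 0$, or $s_k\to 1$ in case (i)) by comparing with the limit functional $\varphi$ and invoking its analogous blow-up on the boundary of $\Sigma_k$.

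For the asymptotic statements (iii) and (iv) I would carry out a blow-up analysis around each $\alpha_{i,\mu}$: rescaling at the proper $\mu$-dependent scale, the limit profile is identified as a standard bubble for the exponential equation and each spike is shown to carry a definite Dirac mass $A_i$. Away from the spikes, $u_\mu$ then solves $\mathcal L u_\mu=\sum_j A_j\delta_{\alpha_j}+o(1)$, so $u_\mu\to\sum_j A_j G(\cdot,\alpha_j)$ pointwise; enforcing $u_\mu(\alpha_{i,\mu})\to 1$ (the collapsing baseline value at each peak) produces the linear system \eqref{eq:system_A_j_def}. Passing to the limit in $\partial_{s_i}\Phi_\mu(\vec s^{\,*}_\mu)=0$ identifies the limit configuration as a critical point of $\varphi$, establishing (iii). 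The main obstacle is expected to be the spike analysis itself: contrary to the polynomial case of \cite{BonheureGrossiNorisTerracini2015}, the Keller--Segel nonlinearity is not scale-covariant, so pinning down the correct $\mu$- and $N$-dependent blow-up ansatz and matching the spike masses with the coefficients $A_i$ is the most delicate technical step.
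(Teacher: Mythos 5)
Your overall plan (local variational building blocks, a finite-dimensional matching in the interface parameters, blow-up analysis for the asymptotics) is in the right spirit, but the core local construction as you set it up fails exactly in the regime $\mu\to\infty$ that the theorem requires. You propose a mountain pass based at $u\equiv 1$ on each subinterval with the constraint $u(s_j)=1$, and you claim $u\equiv1$ is a strict local minimum "provided $\mu$ is large enough relative to the length of $I_i$". The condition you yourself write, namely that the first Dirichlet eigenvalue of $\mathcal L$ on $I_i$ exceed $\mu$, goes the other way: since the limit layer configuration is interior and nondegenerate, the intervals have lengths bounded below, so for large $\mu$ the quadratic form $\int\bigl[(\varphi')^2+(1-\mu)\varphi^2\bigr]r^{N-1}\,dr$ has many negative directions and $u\equiv1$ is a highly unstable critical point; the claimed mountain pass geometry is simply absent. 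The paper avoids this by basing the minimax at the \emph{small} constant solution $\underline u_\mu$ of \eqref{eq:underline_u_mu_def} (after the shift $z=u-\underline u_\mu$ the linearized potential is $1-\mu\underline u_\mu\to1>0$, see \eqref{eq:underline_u_mu_asymptotics}), and, crucially, by working in the cone of nonnegative \emph{monotone} functions (Serra--Tilli / Bonheure--Noris--Weth) on sub-annuli: this is what restores compactness, since $e^{\mu(u-1)}$ is supercritical for $N\geq3$ and $H^1_{rad}$ does not embed into $L^\infty$ on the interval containing the origin, a point your "standard mountain pass" does not address. Consequently the paper's interfaces are placed at the interior \emph{minima}, where both monotone pieces automatically satisfy Neumann conditions, and the finite-dimensional matching is on the \emph{values} (the map $M_\mu$), not on the derivatives; your dual decomposition with $u(s_j)=1$ pinned at the peaks also has the structural defect that taking $u\equiv1$ on spike-free intervals can never produce an exact solution (derivative matching would give $u=1$, $u'=0$ at an endpoint of a nonconstant piece, forcing $u\equiv1$ by ODE uniqueness).

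Beyond this, the reduction step is asserted rather than proved. The identity $\partial_{s_i}\Phi_\mu=c_i\bigl[u_{i+1}'(s_i)-u_i'(s_i)\bigr]$ presupposes that the local solutions are unique, nondegenerate, and depend differentiably on the interface points; in the paper this is a substantial portion of the work (non-degeneracy in Theorem \ref{thm:non_degeneracy}, uniqueness of the minimax level in Theorem \ref{thm:uniqueness_minimal_energy_sol}, $C^1$ dependence in Lemmas \ref{lemma:continuous_dependence}--\ref{lemma:C1_dependence}, and the uniform $C^1$ convergence of the matching functions in Section \ref{subsec:convergenceL}, which itself rests on the blow-up analysis and a Pohozaev identity). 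Likewise, your proposed minimization of $\Phi_\mu$ on the simplex needs coercivity near the boundary \emph{uniformly in $\mu$}, which you justify only by "comparing with the limit functional"; the paper instead solves the matching system by a topological degree (Rouch\'e) argument, transferring the known degree of the limit map $M_\infty$ (from the Lin--Ni--Takagi analysis) to $M_\mu$ via the uniform convergence of Corollaries \ref{coro:bar_s_continuous} and \ref{coro:uniform_converge_1_layer}. Your instinct that the $\mu$-dependent blow-up profile is the delicate point is correct, but in the actual proof that analysis (one-dimensional exponential bubble plus Pohozaev identity) is an ingredient needed \emph{before} the gluing, to control the matching map uniformly, not an afterthought applied to an already-constructed solution.
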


Let us notice that the limit profile is the same as the one obtained for the Lin-Ni-Takagi equation in \cite{BonheureGrossiNorisTerracini2015}.

\begin{remark}\label{rem:half_picks}
Consider problem \eqref{eq:intro_u_mu} in an annulus $B_b\setminus B_a$, $a>0$. In addition to the previous solutions, there also exist solutions having a boundary maximum point at $a$. More precisely, for every integer $k>0$ there exist $\bar{\mu}_3(k)$ and $\bar{\mu}_4(k)$ such that 
\begin{itemize}
\item[(i)] for any $\mu>\bar{\mu}_3(k)$ there exists a radial solution having exactly $k$ maximum points $a=\a_{1,\mu}<\ldots<\a_{k-1,\mu}<\a_{k,\mu}<b$;
\item[(ii)] for any $\mu>\bar{\mu}_4(k)$ there exists a radial solution having exactly $k$ maximum points $a=\a_{1,\mu}<\ldots<\a_{k,\mu}=b$.
\end{itemize}
The analogous of points (iii) and (iv) of Theorem \ref{thm:variational} holds.
\end{remark}

\begin{remark}
When $N=2$, it is possible to provide, for every $\mu>1$, a monotone decreasing solution of \eqref{eq:intro_u_mu} of mountain pass type. This family of solutions shares the monotone behaviour of the solutions along the bifurcation branch $\Bcal_2^+$, as defined in Theorem \ref{thm:bifurcation} (iii). For a sketch of the proof see Remark \ref{rem:dimension2} ahead. A variational characterization of these solutions in dimension higher than 2, as well as a more explicit contruction of oscillatory solutions with $u_\mu(0)>1$ in any dimension, is an interesting open problem.
\end{remark}

The paper is organized as follows. In Section \ref{sec:a_priori_bounds} we collect some a priori bounds for the solutions of \eqref{eq:intro_u_mu}, uniform in the parameter $\mu$. In Section \ref{sec:bifurcation} we perform the bifurcation analysis and we prove Theorems \ref{thm:bifurcation}, \ref{thm:bifurcationNgeq3} and \ref{thm:bifurcationN2}. 

In the remaining sections we prove Theorem \ref{thm:variational}. More precisely, in Section \ref{subsec:existence_increasing} we show that there exists an increasing radial solution of \eqref{eq:intro_u_mu}, characterized as being a mountain pass solution in the cone of nonnegative, nondecreasing functions. This corresponds to the one described in Theorem \ref{thm:variational} (ii) for $k=1$. In Section \ref{subsec:asymptotic_increasing} we analyse its asymptotic behaviour as $\mu\to+\infty$. In Section \ref{subsect:nondegeneracy} we prove, following closely the arguments in \cite[Theorem 5.1]{BonheureGrossiNorisTerracini2015}, that the incresing solution is nondegenerate. This implies, in particular, that it depends in a regular way on the boundary of the domain, as showed in Section \ref{subsec:regular_dependence}. In Section \ref{subsec:decreasing_annulus} we briefly sketch the existence of a decreasing solution in an annulus (which is the one described in Remark \ref{rem:half_picks} (i) for $k=1$) which enjoys similar properties to the increasing one.

In Section \ref{subsec:1layer} we show the existence and convergence of a solution with one interior maximum point, thus proving Theorem \ref{thm:variational} (i) in the case $k=1$. In Section \ref{subsec:convergenceL} we obtain some improved estimates and convergence results, which allow us to conclude the proofs of the remaining main results in Section \ref{subsec:klayer}.

Finally, in the Appendix, we prove some properties of the Green function $G(r,s)$ introduced in \eqref{eq:L_mathcal} in the dimension $N=2$, thus completing \cite[Appendix]{Catrina2009} and \cite[Proposition 2.1]{BonheureGrossiNorisTerracini2015}, which treat the case $N\geq3$.

\section{A priori bounds}\label{sec:a_priori_bounds}

In this section we collect some a priori bounds (uniform in the parameter $\mu$) for the solutions of \eqref{eq:intro_u_mu}.

\begin{lemma}\label{lemma:Jensen}
There exists $C>0$ independent of $\mu$ such that every solution $u$ of \eqref{eq:intro_u_mu} satisfies
\begin{equation}
\|u\|_{L^1(B_1)} + \|e^{\mu(u-1)}\|_{L^1(B_1)} + \|u\|_{W^{2,1}(B_1)}\leq C.
\end{equation}
\end{lemma}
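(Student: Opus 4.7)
The plan is to extract a pointwise bound on the average of $u$ via Jensen's inequality, then propagate it to the full $W^{2,1}$ estimate through the equation and radial integration by parts.

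First I would integrate the equation $-\Delta u+u=e^{\mu(u-1)}$ over $B_1$. Thanks to the Neumann boundary condition, the Laplacian contributes nothing, and one obtains the identity
\begin{equation*}
\int_{B_1} u\,dx = \int_{B_1} e^{\mu(u-1)}\,dx.
\end{equation*}
Denote by $\bar u=|B_1|^{-1}\int_{B_1} u\,dx$ the mean of $u$. Since $t\mapsto e^{\mu(t-1)}$ is convex, Jensen's inequality gives
\begin{equation*}
\bar u = \frac{1}{|B_1|}\int_{B_1} e^{\mu(u-1)}\,dx \ge e^{\mu(\bar u-1)}.
\end{equation*}
Now the function $h(t):=e^{\mu(t-1)}-t$ is strictly convex with zeros exactly at $\underline u_\mu$ and $1$ (from the definition \eqref{eq:underline_u_mu_def}), so $h(t)\le 0$ forces $t\in[\underline u_\mu,1]$. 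Therefore $\bar u\le 1$, which yields simultaneously
\begin{equation*}
\|u\|_{L^1(B_1)} \le |B_1|\quad \text{and}\quad \|e^{\mu(u-1)}\|_{L^1(B_1)} \le |B_1|.
\end{equation*}

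From the equation one immediately gets $\|\Delta u\|_{L^1(B_1)}\le\|u\|_{L^1}+\|e^{\mu(u-1)}\|_{L^1}\le 2|B_1|$. To upgrade this to a $W^{2,1}$ bound I would exploit the radial structure. Writing $f:=e^{\mu(u-1)}-u$, the radial ODE $(r^{N-1}u')'=-r^{N-1}f$ integrated from $0$ (using $u'(0)=0$) gives
\begin{equation*}
r^{N-1}|u'(r)|\le \int_0^r s^{N-1}\bigl(u+e^{\mu(u-1)}\bigr)\,ds,
\end{equation*}
so by Fubini
\begin{equation*}
\int_{B_1}|\nabla u|\,dx = \omega_{N-1}\int_0^1 r^{N-1}|u'(r)|\,dr \le \omega_{N-1}\int_0^1 s^{N-1}(u+e^{\mu(u-1)})(1-s)\,ds \le C,
\end{equation*}
which is the $L^1$ gradient bound.

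For the Hessian bound, I would use that for a radial function the pointwise norm of $D^2u$ is comparable to $|u''(r)|+|u'(r)|/r$. The $|u''|$ piece is controlled directly from the equation $u''=-\tfrac{N-1}{r}u'+u-e^{\mu(u-1)}$ combined with the already-established bound on $r^{N-2}|u'(r)|$ via the formula above (so that $\tfrac{N-1}{r}u'$ cancels with the corresponding term in $r^{N-1}u''$ after multiplying by $r^{N-1}$ and one works instead with $\Delta u$ in radial form); more cleanly, in radial coordinates the singular combination $u''+\tfrac{N-1}{r}u'=\Delta u$ itself is in $L^1$ with controlled norm, and the remaining contribution $|u'|/r$ is handled by the same Fubini estimate on $r^{N-2}|u'(r)|$, whose logarithmic weight is integrable against $s^{N-1}$. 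Summing the three pieces yields $\|u\|_{W^{2,1}(B_1)}\le C$.

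The genuinely delicate step is the last one: standard Calder\'on--Zygmund theory fails at the $L^1$ endpoint, so one cannot simply pass from $\|\Delta u\|_{L^1}$ to $\|D^2 u\|_{L^1}$. The radial reduction is what saves us, turning the obstruction into a one-dimensional integration-by-parts argument whose only price is a logarithmic weight in $s$, harmless for $N\ge 2$. I expect this to be the main technical point; the rest (Jensen plus integration of the Neumann problem) is essentially routine.
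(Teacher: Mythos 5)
Your treatment of the $L^1$ bounds coincides with the paper's proof: integrate the equation, apply Jensen's inequality to get $\bar u\geq e^{\mu(\bar u-1)}$, and use the sign of $x-e^{\mu(x-1)}$ (positive exactly on $(\underline u_\mu,1)$) to conclude $\bar u\leq 1$; this part is correct. Your Fubini argument for the $L^1$ gradient bound, based on $r^{N-1}|u'(r)|\leq\int_0^r s^{N-1}\bigl(u+e^{\mu(u-1)}\bigr)ds$, is also correct. For the $W^{2,1}$ bound the paper simply invokes standard elliptic regularity, so your attempt to make that step explicit is a different route; unfortunately it has a genuine gap exactly at the point you yourself flag as delicate.

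For the tangential part of the Hessian you need $\int_0^1|u'(r)|\,r^{N-2}\,dr\leq C$, and your Fubini computation gives, with $g:=u+e^{\mu(u-1)}$,
\[
\int_0^1 |u'(r)|\,r^{N-2}\,dr \;\leq\; \int_0^1\frac{1}{r}\int_0^r s^{N-1}g(s)\,ds\,dr \;=\; \int_0^1 s^{N-1}g(s)\,\ln\frac{1}{s}\,ds .
\]
Saying that ``the logarithmic weight is integrable against $s^{N-1}$'' is beside the point: $g$ is only controlled in $L^1(s^{N-1}ds)$, not in $L^\infty$, so an unbounded weight cannot be discarded; there is no inequality of the form $\int_0^1 s^{N-1}g\ln(1/s)\,ds\leq C\int_0^1 s^{N-1}g\,ds$. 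If the measure $s^{N-1}g(s)\,ds$ carries a fixed amount of mass $m$ at radii of order $\epsilon$, then $r^{N-1}|u'(r)|\approx m$ for $r>\epsilon$ and $\int_0^1|u'|\,r^{N-2}\,dr\approx m\ln(1/\epsilon)$, which diverges as $\epsilon\to0$. This is precisely the radial incarnation of the failure of Calder\'on--Zygmund theory at the $L^1$ endpoint: the radial reduction does not remove the obstruction, it relocates it to possible concentration of $e^{\mu(u-1)}$ at the origin, which the $L^1$ bound alone does not exclude (uniformly in $\mu$). To close your argument you would need an extra, equation-specific ingredient near $r=0$ (for instance a local $L\log L$ or $L^\infty$ control of $e^{\mu(u-1)}$ there). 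Note also that your second half uses radial symmetry, while the lemma is stated for every solution and is applied in the paper (Theorem \ref{thm:bounds_dim2}) to solutions not assumed radial; the paper's one-line appeal to elliptic regularity makes no such restriction.
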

\begin{proof}
We integrate equation \eqref{eq:intro_u_mu} in $B_1$ and we apply Jensen's inequality to obtain
\begin{equation}
\frac{1}{|B_1|}\int_{B_1} u \,dx= \frac{1}{|B_1|} \int_{B_1} e^{\mu(u-1)} \,dx 
\geq e^{\frac{\mu}{|B_1|}\int_{B_1}(u-1)\,dx}.
\end{equation}
Therefore $\bar{u}:=\int_{B_1} u \,dx/|B_1| $ satisfies $\bar{u}-e^{\mu(\bar{u}-1)}\geq0$. Notice that, by the definition of $\underline{u}_\mu$ in \eqref{eq:underline_u_mu_def}, we have that
\begin{equation}\label{eq:x-exp(x-1)}
h(x):=x-e^{\mu(x-1)}>0 \text{ if and only if } x\in (\underline{u}_\mu,1).
\end{equation}
This provides the $L^1$-bound on $u$ and on $e^{\mu(u-1)}$. The $W^{2,1}$-bound follows from standard elliptic regularity.
\end{proof}

The additional assumption $u(0)<1$ provides uniform bounds in the $C^1$-norm.

\begin{lemma}\label{lemma:a_priori_bounds}
There exists $C>0$ independent of $\mu$ such that every solution $u$ of \eqref{eq:intro_u_mu} with $u(0)<1$ satisfies
\begin{equation}\label{eq:a_priori_bounds}
\|u\|_{C^1(\overline{B_1})}\leq C.
\end{equation}
\end{lemma}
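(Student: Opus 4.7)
The plan is to exploit a Pohozaev-type energy along the radial profile: the hypothesis $u(0)<1$ becomes a smallness statement for this energy at $r=0$, monotonicity propagates it to a uniform sup-norm bound on $u$, and the gradient estimate is then immediate.

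Writing $v(r):=u(r)$ for the radial profile, so that $-v''-\tfrac{N-1}{r}v'+v=e^{\mu(v-1)}$ with $v'(0)=v'(1)=0$, I introduce
$$E(r):=\tfrac12 v'(r)^2-\tfrac12 v(r)^2+\tfrac1\mu e^{\mu(v(r)-1)}.$$
Multiplying the ODE by $v'$ shows $E'(r)=-\tfrac{N-1}{r}v'(r)^2\leq 0$, so $E$ is nonincreasing on $(0,1]$. Since $0<v(0)<1$ gives $e^{\mu(v(0)-1)}<1$ and $v(0)^2\geq 0$, we have $E(0)<1/\mu$. Let $M:=\max_{[0,1]}v$, attained at some $r^\ast\in[0,1]$; in every case $v'(r^\ast)=0$ (radial smoothness if $r^\ast=0$, interior criticality if $r^\ast\in(0,1)$, Neumann boundary if $r^\ast=1$), so $E(r^\ast)=-M^2/2+\mu^{-1}e^{\mu(M-1)}\leq E(0)<1/\mu$, which rearranges to the scalar inequality
$$e^{\mu(M-1)}\leq 1+\tfrac{\mu M^2}{2}.\qquad(\star)$$

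The main technical point is extracting from $(\star)$ a bound $M\leq C_0$ uniform in $\mu>1$. The naive Taylor bound $e^x\geq 1+x+x^2/2$ only yields $(\mu-1)(M-1)^2\leq 1$, which degenerates as $\mu\to 1^+$. Instead, I would test $(\star)$ at the fixed value $M=3$: it reads $e^{2\mu}\leq 1+9\mu/2$, which fails for every $\mu\geq 1$ (true at $\mu=1$, and the $\mu$-derivative of the difference $2e^{2\mu}-9/2$ is positive). One then verifies that $\phi(M):=e^{\mu(M-1)}-1-\mu M^2/2$ is positive and increasing on $[3,\infty)$ for $\mu\geq 1$ (using $\phi''>0$ on $M>1$ and $\phi'(3)>0$), so $(\star)$ forces $M<3$. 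Finally, rearranging $E(r)\leq 1/\mu$ yields $v'(r)^2\leq v(r)^2+2/\mu<M^2+2<11$; since $|\nabla u(x)|=|v'(|x|)|$ for radial $u$, combining with $\|v\|_\infty<3$ gives $\|u\|_{C^1(\overline{B_1})}\leq C$ for an absolute constant $C$. The delicate step is the uniform passage from the transcendental $(\star)$ to an absolute bound on $M$; once this is in hand, the remaining estimates are essentially bookkeeping.
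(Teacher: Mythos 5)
Your proposal is correct and follows essentially the same route as the paper: the function you call $E(r)$ is exactly the paper's non-increasing quantity $L(r)$, and the hypothesis $u(0)<1$ is used in the same way to get $L(r)\leq 1/\mu$. The only difference is that you make the final step quantitative (the scalar inequality $e^{\mu(M-1)}\leq 1+\mu M^2/2$ and the convexity argument forcing $M<3$, then $|u'|^2\leq M^2+2$), where the paper simply reads off the bounds from the sublevel sets of $L$ in the $(u,u')$-plane; your version is a legitimate, fully explicit rendering of that step.
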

\begin{proof}
From the radial equation we get
\[
u''u'-uu'+e^{\mu(u-1)}u' =-\frac{N-1}{r}(u')^2.
\]
Hence the function
\begin{equation}
L(r):=\frac{u'(r)^2}{2}-\frac{u(r)^2}{2}+\frac{e^{\mu(u(r)-1)}}{\mu}
\end{equation}
is non-increasing. If $u(0)<1$, we obtain
\[
L(r)\leq L(0)=-\frac{u(0)^2}{2}+\frac{e^{\mu(u(0)-1)}}{\mu} \leq \frac{1}{\mu} \quad\text{for every } r \in [0,1].
\]
By plotting the curves $L(r)=C$, $C\leq 1/\mu$, in the $(u,u')$-plane, we see that 
\begin{equation}
|u'|\leq 1, \qquad |u|\leq M(\mu) \text{ with } M(\mu)\searrow1\text{ as } \mu\to \infty.
\end{equation}
This provides a $C^1$-bound.
\end{proof}

\begin{remark}\label{remark:boundary_bounds}
By adapting the previous proof we also obtain the following generalization. Let $u_\mu$ be solutions of \eqref{eq:intro_u_mu}. If for every $\mu$ there exists $r_\mu$ such that $u_\mu(r_\mu)<1$ and $u'_\mu(r_\mu)=0$, then 
\begin{equation}
\|u_\mu\|_{C^1(\overline{B_1\setminus B_r})} \leq C,
\end{equation}
with $r=\limsup_{\mu\to\infty} r_\mu$.
\end{remark}

\subsection{A priori bounds in dimension 2}

Another particular situation in which additional a priori bounds hold, is the case of the dimension 2.

\begin{theorem}\label{thm:bounds_dim2}
Let $(\mu_n)\subset \R^+$ be an increasing sequence of positive numbers such that $\mu_n \rightarrow \bar{\mu}<\infty$. Given $c>0$, any sequence $u_n$ of solutions to 
\begin{equation}
\begin{cases}
-\Delta u_n+u_n=e^{\mu_n(u_n-1)} \quad &\text{in } B_1 \subset \R^2 \\
\partial_\nu u_n=0, \ u_n\leq c \quad &\text{on } \partial B_1 \\
u>0 \quad &\text{in } B_1,
\end{cases}
\end{equation}
satisfies
$$\left\|u_n\right\|_{L^\infty (B_1)}\leq C,$$
for some constant $C$ not depending on $n$.
\end{theorem}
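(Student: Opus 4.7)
The plan is to argue by contradiction using a Brezis-Merle type concentration-compactness analysis. Suppose that, along a subsequence, $M_n := \|u_n\|_{L^\infty(B_1)} \to \infty$. By Lemma \ref{lemma:Jensen} (integrating the equation and using the Neumann condition), one has $\int_{B_1} e^{\mu_n(u_n-1)}\,dx = \int_{B_1} u_n\,dx \leq |B_1|$, so the right-hand side $f_n := e^{\mu_n(u_n-1)} - u_n$ of $-\Delta u_n = f_n$ stays bounded in $L^1(B_1)$. Since $u_n\leq c$ on $\partial B_1$, the maximizers $x_n$ of $u_n$ eventually lie in the open ball $B_1$; up to a subsequence, $x_n\to x_0\in \overline{B_1}$.

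Next I would invoke the two-dimensional Brezis-Merle alternative, adapted to the Neumann operator $-\Delta+\mathrm{Id}$ on the disk, whose Green function still has the logarithmic singularity $G(x,y) = -\tfrac{1}{2\pi}\log|x-y| + O(1)$ (doubled at $y\in\partial B_1$ by reflection). Up to a further subsequence, the measures $\nu_n := e^{\mu_n(u_n-1)}\,dx$ converge weakly-$\ast$ to a finite Radon measure $\nu$ on $\overline{B_1}$; the blow-up set
\[
S \;:=\; \bigl\{x\in\overline{B_1}:\,\nu(\{x\})\geq 4\pi/\bar{\mu}\bigr\}
\]
is finite; and on $\overline{B_1}\setminus S$ the sequence $u_n$ is uniformly bounded in $L^\infty_{\mathrm{loc}}$ and, up to a further subsequence, converges in $C^0_{\mathrm{loc}}$ to a limit $u^{*}\in C^{2}(\overline{B_1}\setminus S)$. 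The hypothesis $M_n\to\infty$ together with $u_n\leq c$ on $\partial B_1$ forces $S$ to be nonempty.

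To reach a contradiction I would analyze $u^{*}$ at each concentration point $x_i\in S$. The limit satisfies, in the distributional sense in $B_1$,
\[
-\Delta u^{*}+u^{*} \;=\; e^{\bar{\mu}(u^{*}-1)} + \sum_{x_i\in S}\alpha_i\,\delta_{x_i},\qquad \alpha_i \;\geq\; 4\pi/\bar{\mu},
\]
so by the Green representation $u^{*}(x)\geq -\tfrac{2}{\bar{\mu}}\log|x-x_i| - C$ near each $x_i$ (with an analogous, stronger logarithmic lower bound by reflection when $x_i\in\partial B_1$). Consequently $e^{\bar{\mu}u^{*}(x)} \gtrsim |x-x_i|^{-2}$ near each $x_i$, and $\int_{B(x_i,\delta)} e^{\bar{\mu}(u^{*}-1)}\,dx\to+\infty$ as $\delta\to 0$. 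On the other hand, from the uniform bound $\int_{B_1} e^{\mu_n(u_n-1)}\,dx \leq |B_1|$ and the $C^0_{\mathrm{loc}}$-convergence $u_n\to u^{*}$ off $S$, one obtains $\int_{B_1\setminus\bigcup_i B(x_i,\delta)}e^{\bar{\mu}(u^{*}-1)}\,dx \leq |B_1|$ for every $\delta>0$; monotone convergence then gives $\int_{B_1} e^{\bar{\mu}(u^{*}-1)}\,dx<\infty$, contradicting the previous divergence. This contradiction establishes the uniform $L^\infty$-bound.

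The main obstacle I foresee is to set up the Brezis-Merle alternative in the appropriate form: the classical statement concerns $-\Delta u=V\!e^u$ with Dirichlet data on a bounded planar domain, whereas here we need its analogue for $-\Delta+\mathrm{Id}$ with Neumann data on $B_1$, together with the correct quantization (and non-integrability threshold) at concentration points lying on $\partial B_1$ — the interior case is the clean one and the boundary case requires the standard reflection adjustment of both the Green function and the Brezis-Merle threshold, which must be combined so that the non-integrability of $e^{\bar{\mu}u^{*}}$ persists in either case.
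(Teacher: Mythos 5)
Your proposal is correct and follows essentially the same route as the paper: a Brezis--Merle-type concentration analysis adapted to $-\Delta+\mathrm{Id}$ (the ``alternative'' you invoke is exactly what the paper establishes in Lemma \ref{lemma:brezismerle} and Lemma \ref{lemma:Sigma} via the decomposition $u_n=v_n+w_n$ near sub-$4\pi$ mass points), followed by the same contradiction at a concentration point between the forced logarithmic singularity, which makes $e^{\bar{\mu}u}$ non-integrable, and the uniform $L^1$ bound on $e^{\mu_n(u_n-1)}$ --- the paper runs this comparison on auxiliary Dirichlet solutions $z_n$ rather than on the limit $u^{*}$, which is equivalent. The boundary reflection you single out as the main obstacle is sidestepped in the paper: the hypothesis $u_n\leq c$ on $\partial B_1$ is used to exclude boundary blow-up, so only interior estimates are proved, with a remark citing the boundary-rectification argument of Wang--Wei for the general case.
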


The proof follows closely that of \cite[Theorem 3]{BrezisMerle1991} (see also \cite{WangWei2002}). We divide it in several steps.

\begin{lemma}\label{lemma:brezismerle}
Assume that $\Omega \subset \R^2$ be a bounded domain and let $u$ be a solution of
\begin{equation}
\begin{cases}
-\Delta u +u= f(x) &\text{ in } \Omega \\ 
u=0 &\text{ on } \partial \Omega,
\end{cases}
\end{equation}
with $f\in L^1 (\Omega )$. For every $\delta \in (0,4\pi)$, there exists a constant $C$ depending on $\delta$ and $diam (\Omega)$ such that
$$\int_\Omega \exp \left[\dfrac{(4\pi - \delta) |u(x)|}{\left\|f\right\|_{L^1 (\Omega)}} \right] dx \leq C.$$ 
\end{lemma}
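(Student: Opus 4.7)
The plan is to reduce to the classical Brezis--Merle inequality for $-\Delta$ with homogeneous Dirichlet data in a bounded planar domain (see \cite{BrezisMerle1991}): if $-\Delta w = g$ in $\Omega$ with $w=0$ on $\partial\Omega$ and $g\in L^1(\Omega)$, then for every $\delta\in(0,4\pi)$,
\begin{equation*}
\int_\Omega \exp\!\left(\frac{(4\pi-\delta)|w(x)|}{\|g\|_{L^1(\Omega)}}\right)dx \leq \frac{4\pi^2}{\delta}(\mathrm{diam}(\Omega))^2.
\end{equation*}
The only new feature of our statement is the zeroth-order term $+u$ in the equation, and since this is a positive-definite perturbation of $-\Delta$ one expects it can only help. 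The strategy is therefore to dominate $|u|$ pointwise by the Newtonian-type potential generated by $|f|$ and then invoke the classical result.

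First I would apply Kato's inequality (justified by approximating $|u|$ by $\sqrt{u^2+\eps}$ and letting $\eps\to 0$) to obtain, in the sense of distributions,
\begin{equation*}
-\Delta|u| \leq \mathrm{sign}(u)(-\Delta u) = \mathrm{sign}(u)(f-u) \leq |f|-|u|,
\end{equation*}
so that $-\Delta|u|+|u|\leq|f|$ in $\mathcal{D}'(\Omega)$ with $|u|\in W^{1,1}_0(\Omega)$. Let then $w\in W^{1,1}_0(\Omega)$ be the (distributional) solution of $-\Delta w=|f|$ in $\Omega$ with $w=0$ on $\partial\Omega$. Subtracting, $-\Delta(w-|u|)\geq |u|\geq 0$ with zero boundary trace, so the weak maximum principle yields $0\leq |u|\leq w$ pointwise a.e.\ in $\Omega$.

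Since $\|\,|f|\,\|_{L^1(\Omega)}=\|f\|_{L^1(\Omega)}$, applying the classical Brezis--Merle estimate to $w$ and using the monotonicity of the exponential gives
\begin{equation*}
\int_\Omega \exp\!\left(\frac{(4\pi-\delta)|u(x)|}{\|f\|_{L^1(\Omega)}}\right)dx \leq \int_\Omega \exp\!\left(\frac{(4\pi-\delta)w(x)}{\|f\|_{L^1(\Omega)}}\right)dx \leq \frac{4\pi^2}{\delta}(\mathrm{diam}(\Omega))^2,
\end{equation*}
which is the desired bound with $C=C(\delta,\mathrm{diam}(\Omega))$.

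The main subtlety is justifying Kato's inequality and the comparison principle in the low-regularity setting $f\in L^1(\Omega)$. This is handled by a standard approximation: pick $f_n\in L^\infty(\Omega)$ with $f_n\to f$ in $L^1(\Omega)$, apply Kato and the maximum principle to the corresponding $H^1_0$-solutions, and pass to the limit using the continuity of the resolvent from $L^1(\Omega)$ into $W^{1,q}_0(\Omega)$ for every $q<2$, which holds precisely because the ambient dimension is $N=2$.
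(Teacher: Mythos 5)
Your proof is correct, but it takes a genuinely different route from the paper's. The paper does not reduce to the classical statement: it reruns the Brezis--Merle argument itself, replacing the Green function of $-\Delta$ by the Dirichlet Green function $\mathcal{G}$ of $-\Delta+Id$ and observing that $\mathcal{G}(x,y)=-\frac{1}{2\pi}\log|x-y|+O(1)$ as $|x-y|\to0$ (by the maximum principle $\mathcal{G}$ is in fact dominated by the Green function of $-\Delta$, so the logarithmic upper bound used in \cite[Theorem 1]{BrezisMerle1991} persists). You instead handle the zeroth-order term by comparison at the level of solutions: Kato's inequality gives $-\Delta|u|\le \mathrm{sign}(u)f-|u|\le|f|$, hence $|u|\le w$ where $-\Delta w=|f|$ in $\Omega$, $w=0$ on $\partial\Omega$, and the classical Brezis--Merle estimate applied to $w$ yields the bound, even with the explicit constant $\frac{4\pi^2}{\delta}\,(\mathrm{diam}\,\Omega)^2$. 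Your approach uses the cited theorem as a black box and avoids repeating the representation-formula and Jensen-type argument, at the price of justifying Kato's inequality and the comparison principle with $L^1$ data; your approximation scheme (bounded data $f_n\to f$ in $L^1$, stability of the resolvent in $W^{1,q}_0(\Omega)$ for $q<2$) handles this, the only implicit assumption being that $u$ is the duality-type solution obtained as the limit of the approximating solutions, which is exactly the situation in which the lemma is used in the paper. Conceptually the two proofs rest on the same fact, namely that the positive zeroth-order term only lowers the kernel: the paper exploits it at the level of Green functions, you at the level of solutions.
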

\begin{proof}
The proof proceeds exactly as that of \cite[Theorem 1]{BrezisMerle1991}. The only difference is that, instead of working with the Green function of the Laplace operator, we work with the following Green function $-\Delta \mathcal{G} (.,y)+\mathcal{G}(.,y)=\delta_y $, $\mathcal{G}(.,y)=0$ on $\partial \Omega$. The important property is that the two have the same asymptotic behavior, that is to say $\mathcal{G}(x,y)=-\dfrac{1}{2\pi}\log |x-y|+O(1)$ as $|x-y|\to0$. 
\end{proof}

Denote by $S$ the \emph{blow-up set} of $u_n$, that is to say
$$
S=\left\{x\in \overline{B}_1 :\ \exists\, x_n \rightarrow x\ s.t.\ u_n(x_n)\rightarrow +\infty  \right\}.
$$
Notice that by assumption $S\cap \partial B_1=\emptyset$. We aim to prove that $S=\emptyset$.

Since, by Lemma \ref{lemma:Jensen}, $\int_{B_1}  e^{\mu_n (u_n-1)} <C$, there exists a positive bounded Borel measure $\rho$ such that
\begin{equation}\label{eq:rho_measure_def}
\mu_n\int_{B_1} e^{\mu_n (u_n-1)}\varphi \,dx\rightarrow \int_{B_1} \varphi \,d\rho,
\quad \text{for every }\varphi \in C_0^\infty (B_1).
\end{equation}
We say that $x_0\in \overline{B}_1$ is a \emph{regular point} if there exists a function $\varphi  \in C_0^\infty (B_1)$ such that $0\leq \varphi \leq 1$, $\varphi=1$ on a neighborhood of $x_0$ and
$$
\int_{B_1} \varphi \, d\rho <4\pi.
$$
Finally, we define the \emph{set of singular points} $\Sigma$ as the complementary of the set of regular points. 

\begin{lemma}\label{lemma:Sigma}
\begin{itemize}
\item[(i)] If $x_0$ is a regular point, then $(u_n)$ is bounded in $L^\infty (B_{R_0}(x_0)\cap \overline{B}_1)$ for some $R_0>0$.
\item[(ii)] $S=\Sigma$.
\end{itemize}
\end{lemma}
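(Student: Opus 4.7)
My plan is to mimic the Brezis--Merle blow-up analysis \cite{BrezisMerle1991}, adapted to the Neumann problem \eqref{eq:intro_u_mu}, with Lemma~\ref{lemma:brezismerle} as the central tool. For (i), since $S\cap\partial B_1=\emptyset$ one may reduce to the case $x_0\in B_1$. Fix a cutoff $\varphi\in C_0^\infty(B_1)$ provided by the definition of regular point, so that $\int\varphi\,d\rho<4\pi$, and choose $R>0$ and $\eta\in C_c^\infty(B_1)$ with $\eta\le\varphi$, $\eta\equiv 1$ on $B_R(x_0)$, and $\operatorname{supp}\eta\subset\{\varphi\equiv 1\}\subset B_1$. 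Since $\eta\le\varphi$, the convergence \eqref{eq:rho_measure_def} yields some $\delta\in(0,4\pi)$ with
\begin{equation*}
\|\mu_n\eta e^{\mu_n(u_n-1)}\|_{L^1(B_1)}<4\pi-2\delta
\end{equation*}
for all large $n$.

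Now introduce the Brezis--Merle decomposition $\mu_n u_n=w_n+z_n$ on $B_R(x_0)$, where $w_n$ is the Dirichlet solution of
\begin{equation*}
-\Delta w_n+w_n=\eta\,\mu_n e^{\mu_n(u_n-1)}\text{ in }B_1,\qquad w_n=0\text{ on }\partial B_1.
\end{equation*}
The maximum principle gives $w_n\ge 0$, and standard $L^1$-elliptic theory yields $\|w_n\|_{L^1(B_1)}\le C$; applying Lemma~\ref{lemma:brezismerle} to $w_n$ one finds $\varepsilon>0$ (depending only on $\delta$) with $\int_{B_1}e^{(1+\varepsilon)w_n}\,dx\le C$ uniformly in $n$. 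On $B_R(x_0)$, where $\eta\equiv 1$, the remainder $z_n=\mu_n u_n-w_n$ solves $-\Delta z_n+z_n=0$; combining Lemma~\ref{lemma:Jensen}, the $L^1$-bound on $w_n$, and the hypothesis $\mu_n\to\bar\mu<\infty$, we get $\|z_n\|_{L^1(B_R(x_0))}\le C$, and interior elliptic regularity upgrades this to $\|z_n\|_{L^\infty(B_{R/2}(x_0))}\le C$. Writing $\mu_n(u_n-1)=w_n+(z_n-\mu_n)$ and using $\mu_n\le\bar\mu$, we conclude that $e^{\mu_n(u_n-1)}$ is uniformly bounded in $L^{1+\varepsilon}(B_{R/2}(x_0))$. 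In dimension $N=2$, standard interior $W^{2,1+\varepsilon}$ estimates for \eqref{eq:intro_u_mu} and the Sobolev embedding $W^{2,1+\varepsilon}\hookrightarrow L^\infty$ then produce the required bound with $R_0=R/4$.

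For (ii) I argue by double inclusion. If $x_0\notin\Sigma$, then by definition $x_0$ is regular, and (i) provides a uniform $L^\infty$-bound for $(u_n)$ near $x_0$, so $x_0\notin S$; hence $S\subseteq\Sigma$. Conversely, if $x_0\notin S$, then $(u_n)$ is uniformly bounded on some $B_\rho(x_0)$, and the boundedness of $(\mu_n)$ forces $\mu_n e^{\mu_n(u_n-1)}$ to be uniformly bounded in $L^\infty(B_\rho(x_0))$. Thus $\rho$ restricted to that neighborhood has a bounded density, and any $\varphi\in C_0^\infty(B_\rho(x_0))$ identically $1$ near $x_0$ with sufficiently small support satisfies $\int\varphi\,d\rho<4\pi$, showing $x_0$ is regular, i.e.\ $x_0\notin\Sigma$.

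The main obstacle is the estimate in (i): Lemma~\ref{lemma:brezismerle} only controls the exponential of the auxiliary function $w_n$, and the key trick is to exploit the decomposition $\mu_n(u_n-1)=w_n+(z_n-\mu_n)$ together with the hypothesis $\mu_n\to\bar\mu<\infty$ in order to promote $L^{1+\varepsilon}$-control of $e^{w_n}$ to $L^{1+\varepsilon}$-control of the full nonlinearity $e^{\mu_n(u_n-1)}$. The subsequent step from such $L^{1+\varepsilon}$-integrability to an $L^\infty$-bound relies essentially on $N=2$, through the Sobolev embedding $W^{2,p}\hookrightarrow L^\infty$ for $p>1$.
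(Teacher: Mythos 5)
Your proposal is correct and follows essentially the same Brezis--Merle scheme as the paper: use the regular-point condition to make the local mass of $\mu_n e^{\mu_n(u_n-1)}$ strictly below $4\pi$, apply Lemma~\ref{lemma:brezismerle} to a Dirichlet piece to get uniform $L^{1+\varepsilon}$ control of the exponential, bound the homogeneous remainder in $L^\infty$ from the $L^1$ bounds of Lemma~\ref{lemma:Jensen} together with $\mu_n\leq\bar\mu$, and conclude by interior elliptic regularity in dimension $2$; part (ii) is the same double inclusion. The only (immaterial) difference is that you cut off the nonlinearity and solve the Dirichlet problem on all of $B_1$, controlling $z_n$ by interior estimates for $-\Delta z_n+z_n=0$, whereas the paper splits $u_n$ on a small ball $B_{R_1}(x_0)$ with the harmonic-type part carrying the boundary data $u_n\geq0$ and invokes Harnack.
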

\begin{proof}
(i) Let $x_0$ be a regular point. Since by assumption $u_n$ is bounded on $\partial B_1$, we only consider the case $x_0\in B_1$. By definition, there exists $R_1>0$ such that
\begin{equation}
\mu_n\int_{B_{R_1}(x_0)} e^{\mu_n (u_n-1)} \,dx < 4\pi \quad\text{for every } n.
\end{equation}
We decompose $u_n$ as $u_n=v_n+w_n$ where
$$
\begin{cases}
-\Delta v_n +v_n = e^{\mu_n (u_n-1)} & \text{ in } B_{R_1}(x_0) \\  
v_n=0 & \text{ on } \partial B_{R_1}(x_0),
\end{cases}
$$
and
$$
\begin{cases}
-\Delta w_n +w_n = 0 & \text{ in } B_{R_1}(x_0) \\  
w_n=u_n & \text{ on } \partial B_{R_1}(x_0).
\end{cases}
$$
By Lemma \ref{lemma:Jensen}, there exists $C>0$ independent of $n$ such that $\left\|v_n\right\|_{L^1 (B_{R_1}(x_0))}\leq C$. This, together with the Harnack inequality and Lemma \ref{lemma:Jensen}, provides
$$
\left\|w_n\right\|_{L^\infty(B_{R_1/2}(x_0))}\leq \left\|w_n\right\|_{L^1 (B_{R_1}(x_0))}
\leq C\int_{B_{R_1}(x_0)} (u_n +|v_n|)\,dx \leq C.
$$
Using Lemma \ref{lemma:brezismerle}, we have, for some $\varepsilon>0$,
$$
\int_{B_{R_1}(x_0)} e^{(1+\varepsilon)\mu_n |v_n|} \,dx\leq C.
$$
From the two previous estimates and standard elliptic estimates, we deduce that there exists $R_0$ such that $\left\|u_n\right\|_{L^\infty (B_{R_0}(x_0))}\leq C$. 

(ii) Thanks to the first claim, we have the inclusion $S\subset \Sigma$. Let us prove the reverse inclusion. Let $x_0\in \Sigma$ and suppose by contradiction that there exists $R_0$ such that $\left\|u_n\right\|_{L^\infty (B_{R_0}(x_0))}<C $. Since $0<\mu_n\leq \bar{\mu}$, we have that $e^{\mu_n (u_n -1)}\leq C$ in $B_{R_0}(x_0)$ for any $n$. Therefore, taking a smaller $R_0$ if necessary,
$$
\mu_n\int_{B_{R_0}(x_0)}e^{\mu_n (u_n -1)} \,dx \leq C R_0^2 < 4\pi .
$$
This contradicts the fact that $x_0\in \Sigma$ and establishes the second claim.
\end{proof}

\begin{proof}[Proof of Theorem \ref{thm:bounds_dim2}]
By contradiction suppose that there exists $x_0 \in S$. Since the $u_n$ are uniformly bounded on $\partial B_1$ by assumption, then $x_0\in B_1$. Take $R>0$ small enough such that $S\cap B_R (x_0) = \{x_0\}$. This can be done because, by Lemma \ref{lemma:Jensen} and by the definition of $\rho$ \eqref{eq:rho_measure_def}, $\int_{B_1}d\rho$ is bounded, which implies that $\Sigma =S$ is finite.
Let $z_n$ be a sequence of solutions to 
$$
\begin{cases}
-\Delta z_n +z_n = e^{\mu_n(u_n-1)} & \text{ in } B_R (x_0)\\ 
z_n=0& \text{ on } \partial B_R (x_0).
\end{cases}
$$
By the maximum principle, we have
\begin{equation}\label{eq:z_n_bounded}
\int_{B_R (x_0)}e^{\mu_n z_n}\,dx\leq \int_{B_R (x_0)}e^{\mu_n u_n}\,dx \leq C.
\end{equation}
On the other hand, $z_n\rightarrow z$ a.e. where $z$ solves
$$
\begin{cases}
-\Delta z +z = \dfrac{\rho}{\bar{\mu}} & \text{ in } B_R (x_0)\\ 
z=0& \text{ on } \partial B_R (x_0).
\end{cases}
$$
Since $x_0\in S$, Lemma \ref{lemma:Sigma} (ii) implies that $\rho(\{x_0\})\geq4\pi$, so that $\rho\geq 4\pi \delta_{x_0}$. We deduce that
$$
z\geq \dfrac{4\pi}{2\pi \bar{\mu}} \log \dfrac{1}{|x- x_0|}+O(1)
$$
as $x\to x_0$.
Thus, this yields to
$$
\int_{B_R (x_0)}e^{\mu_n z_n}\,dx \geq C\int_0^R r^{-1}\,dr=\infty,
$$
which contradicts \eqref{eq:z_n_bounded}.
\end{proof}

\begin{remark}
The a priori bounds hold true up to the boundary of the domain, without assuming $u_n\leq c$ on $\partial B_1$. This can be proved by locally rectifying the boundary, as done in \cite[Lemma 3.2]{WangWei2002}. For the reader's convenience, we have preferred to present here only the interior estimates because they are sufficient for our purposes.
\end{remark}

\section{Bifurcation analysis}\label{sec:bifurcation}

Recall that $\l_i^{rad}$ denotes the $i$-th eigenvalue of the operator $-\Delta+Id$ in $B_1$ with Neumann boundary conditions, restricted to the radial functions. Correspondingly, $\varphi_i$ is the associated eigenfunction, normalized in the $L^2$-norm.

\begin{proof}[Proof of Theorem \ref{thm:bifurcation}]
We apply the Crandall-Rabinowitz theorem \cite{CrandallRabinowitz1971} in the space
\begin{equation}
X=\{ u\in C^{2,\a}(\overline{B}_1): \, u \text{ is radial }, \partial_r u(1)=0 \},
\end{equation}
$\a \in(0,1)$. The following operator is well defined in $X$
\begin{equation}
F(\mu,u)=(-\Delta +Id)u-e^{\mu(u-1)},
\end{equation}
with values in 
\begin{equation}
Y=\{ u\in C^{0,\a}(\overline{B}_1): \, u \text{ is radial} \},
\end{equation}
together with its derivatives
\begin{equation}
\partial_u F(\mu,u)[\varphi]=(-\Delta+Id)\varphi-\mu e^{\mu(u-1)}\varphi
\end{equation}
\begin{equation}\label{eq:F_derivative_u_u}
\partial^2_u F(\mu,u)[\varphi,\psi]=-\mu^2 e^{\mu(u-1)}\varphi\psi
\end{equation}
\begin{equation}
\partial^2_{u,\mu} F(\mu,u)[\varphi]=-e^{\mu(u-1)}\varphi-\mu e^{\mu(u-1)} (u-1)\varphi.
\end{equation}
We have that
\begin{equation}
\Ker(\partial_u F(\l_i^{rad},1))=\spann\{\varphi_i\},
\end{equation}
\begin{equation}\label{eq:Psi_def}
\Rk(\partial_u F(\l_i^{rad},1))=\Ker(\Psi), \quad\text{where}\quad \langle\Psi,f\rangle=\int_{B_1} f\varphi_i\,dx,
\end{equation}
for $f\in Y$. Since
\begin{equation}
a:=\langle \Psi,\partial^2_{u,\mu} F(\l_i^{rad},1)[\varphi_i]\rangle=\int_{B_1} \varphi_i^2\,dx=-1,
\end{equation}
the Crandall-Rabinowitz theorem implies that $(\l_i^{rad},1)$ is a bifurcation point.

(i) Again by the Crandall-Rabinowitz theorem there exist a neighborhood $\mathcal{U} \subset \R\times X$ with $(\lambda_i^{rad},1)\in\mathcal{U}$, $\eps>0$ and a $C^1$-curve $\gamma:(-\eps,\eps)\to\R\times X$ with $\gamma(0)=(\lambda_i^{rad},1)$, such that
\begin{equation}\label{eq:bifurcation_curve}
F(\mu,u)=0, \ (\mu,u)\in \mathcal{U} \text{ if and only if there exists } s\in (-\eps,\eps) \text{ s.t. }
(\mu,u)=\gamma(s).
\end{equation}
We let $\gamma(s)=(\gamma_1(s),\gamma_2(s))$ with $\gamma_1\in\R$ and $\gamma_2\in X$.

The fact that the branches are unbounded and do not intersect comes from the fact that the number of zeroes of $u_\mu-1$ is preserved along the branch, as we will show in point (iv) below.

(ii) Let us show that $u_\mu\geq \underline{u}_\mu$. Close to the bifurcation point, $u_\mu$ is close to 1 in the $C^{2,\alpha}$-topology. Suppose that there exist $\mu, u_\mu, r$ such that
\[
u_\mu(r)<\underline{u}_\mu.
\]
We can suppose that $r$ is a minimum point of $u_\mu$, hence we have
\[
u_\mu'(r)=0, \qquad u_\mu''(r)\geq0.
\]
By \eqref{eq:x-exp(x-1)} we have
\[
u_\mu''(r)=u_\mu(r)-e^{\mu(u_\mu(r)-1)} <0,
\]
which is a contradiction.

(iii) By \cite{CrandallRabinowitz1971} we have that, along $\Bcal_i$, the derivative of the curve in \eqref{eq:bifurcation_curve} satisfies
\begin{equation}
\gamma_2'(0)=\varphi_i,
\end{equation}
so that
\begin{equation}
\gamma_2(s)=1+s\varphi_i+o(s) \quad\text{as } s\to0.
\end{equation}
Since $\varphi_i(0)>0$, see equation $(10.7.3)$ of \cite{bessel}, we deduce that $u_\mu(0)<1$ on one connected component, locally near the bifurcation point, and $u_\mu(0)>1$ on the other connected component, locally near the bifurcation point. This property holds along the whole branch because $u_\mu(0)\neq 1$ for every $u_\mu \in \Bcal_i$ and $\mu>\l_i^{rad}$. Indeed, $u_\mu(0)= 1$ would imply $u_\mu\equiv1$ by the local uniqueness of the solution for the Cauchy problem, but this is impossible since $1$ does not belong to the birfucation branch.

(iv) By the theory of Sturm-Liouville, the roots of $u_\mu-1$ are simple and the number of zeros of $u_\mu-1$ remains constant along the branch $\Bcal_i$. In order to prove that this number is $i-1$, let $(\mu_n,u_n)\in \Bcal_i$ be such that $(\mu_n,u_n)\to (\l_i^{rad},1)$ in $\R\times C^{2,\a}(\overline{B}_1)$. The normalized functions
\[
v_n:=\frac{u_n-1}{\|u_n-1\|_{C^{2,\a}(B_1)}}
\]
satisfy
\[
-\Delta v_n+v_n=\frac{e^{\mu_n(u_n-1)}-1}{\|u_n-1\|_{C^{2,\a}(B_1)}} 
= (\l_i^{rad}+o(1)) v_n + \frac{o(\|u_n-1\|_{C^{2,\a}(B_1)})}{\|u_n-1\|_{C^{2,\a}(B_1)}}.
\]
Since $v_n$ is a bounded sequence in $C^{2,\a}(B_1)$, $v_n\to v^*$ in $C^{1,\a}(B_1)$ and we can pass to the limit in the previous equation. We deduce that $v^*=k\varphi_i$ for some $k\neq 0$, so $v^*$ has $i-1$ zeros. Since these zeros are simple, $v_n$ also has $i-1$ zeros for $n$ sufficiently large.

Concerning the zeroes of $u_\mu'$, suppose by contradiction that $u_\mu'(s)=u_\mu'(t)=0$ and, to fix the ideas, that $u_\mu(r)-1<0$ for every $r\in (s,t)$. By point (iii) we have $u_\mu(r) \geq \underline{u}_\mu$, hence \eqref{eq:x-exp(x-1)} implies that 
\[
u(r)-e^{\mu(u(r)-1)}>0 \quad\text{in } (s,t).
\]
On the other hand, by integrating the equation we obtain
\[
0=\int_s^t (u-e^{\mu(u-1)})r^{N-1} \,dr,
\]
which is a contradiction.

(v) By point (iii) the functions on the right branch satisfy $u_\mu(0)<1$, hence Lemma \ref{lemma:a_priori_bounds} applies, providing uniform $C^1$-bounds.
\end{proof}

\begin{proof}[Proof of Theorem \ref{thm:bifurcationNgeq3}]
Using \eqref{eq:F_derivative_u_u} and \eqref{eq:Psi_def} we compute
\begin{equation}
b:=-\frac{1}{2a} \langle\Psi, \partial^2_u F(\l_i^{rad},1)[\varphi_i,\varphi_i] \rangle
=-\frac{1}{2} (\l_i^{rad})^2 \int_{B_1} \varphi_i^3 \,dx.
\end{equation}
It is proved in \cite{BonheureGrumiauTroestler2015} that $b<0$ in dimension $N\geq 3$, which implies that $(\l_i^{rad},1)$ is a transcritical bifurcation point. By \cite{CrandallRabinowitz1971} we also have
\[
u_\mu=1+\frac{\mu-\l_i^{rad}}{b}\varphi_i+o(|\mu-\l_i^{rad}|) \quad\text{as } \mu\to\l_i^{rad}.
\]
Then we can conclude as in point (iii) of Theorem \ref{thm:bifurcation}.
\end{proof}

\begin{proof}[Proof of Theorem \ref{thm:bifurcationN2}]
By point (iv) of Theorem \ref{thm:bifurcation}, every bifurcation solution $u_\mu$ has a point $r_\mu \in [0,1]$ such that $u_\mu(r_\mu)<1$ and $u_\mu'(r_\mu)=0$. Then by Remark \ref{remark:boundary_bounds} the $u_\mu$ are uniformly bounded on $\partial B_1$. 

Theorem \ref{thm:bounds_dim2} applies providing the required bounds.
\end{proof}

\section{Variational characterization of the monotone solutions}\label{sec:variational}

\subsection{Existence of the increasing solution}\label{subsec:existence_increasing}
In this section we prove the existence of an increasing solution of \eqref{eq:intro_u_mu} by a variational method. We conjecture that such solution coincides with the one belonging to the right branch bifurcating from $(\lambda_2^{rad},1)$ that we found in the previous section. It is also possible that it coincides with the one found in \cite{PistoiaVaira2015}.

We work in the more general radial domain $B_b\setminus B_a$, $0\leq a<b$, and we study the following problem
\begin{equation}\label{eq:u_mu_annulus}
\begin{cases}
-\Delta u+u=e^{\mu(u-1)} \quad &\text{in } B_b\setminus B_a \\
\partial_\nu u=0 \quad &\text{on } \partial (B_b\setminus B_a) \\
u>0 \quad &\text{in } B_b\setminus B_a.
\end{cases}
\end{equation}

{\bf Notation.} We use the convention that $B_b\setminus B_0=B_b$, which allows us to treat at the same time the case of the annulus and that of the ball. 
In order to highlight the domain dependence, we denote by $u_\mu(r;a,b)$ a solution of \eqref{eq:u_mu_annulus}. 
When we don't need to put emphasis on the domain dependence, we shall sometimes write more simply $u_\mu(r)$.
The prime signs $u'_\mu(r;a,b)$, $u''_\mu(r;a,b)$, and so on, denote always derivatives with respect to the variable $r$.

Let
\begin{multline}
\Ccal_{+}(a,b)=\{u\in H^1_{rad}(B_b\setminus B_a):\ 0\leq u(r)\leq C \text{ for every } a \leq r \leq b \\ 
u(r)\leq u(s) \text{ for every } a < r\leq s \leq b\},
\end{multline}
with $C$ defined in \eqref{eq:a_priori_bounds}.

\begin{theorem}\label{thm:increasing_sol_existence}
For $\mu>\l_2^{rad}(a,b)$ there exists an increasing radial solution $u_{\mu,+}(r)=u_{\mu,+}(r;a,b)$ of \eqref{eq:u_mu_annulus}, which has the following variational characterization
\begin{equation}\label{eq:zeta_mu_+_def}
z_{\mu,+}(r):=u_{\mu,+}(r)-\underline{u}_\mu \in \Ccal_{+}(a,b) 
\end{equation}
and
\begin{equation}\label{eq:c_mu_+_def}
E_\mu(z_{\mu,+};a,b)=\inf_{\substack{z\in \Ccal_{+}(a,b) \\ z\not\equiv0}} \sup_{t\geq0} E_\mu(tz;a,b)=:c_{\mu,+}(a,b).
\end{equation}
Recall that $\underline{u}_\mu$ was defined in \eqref{eq:underline_u_mu_def}.
Here 
\begin{multline}
E_\mu(z;a,b):= \int_{B_b\setminus B_a} \left(\frac{|\nabla z|^2}{2}+\frac{(z+\underline{u}_\mu)^2}{2} -\frac{e^{\mu(z+\underline{u}_\mu-1)}}{\mu} \right)\,dx\\
=\int_{B_b\setminus B_a} \left(\frac{|\nabla z|^2}{2}+\frac{z^2}{2} +\underline{u}_\mu \left(z-\frac{e^{\mu z}}{\mu} \right) +\frac{\underline{u}_\mu^2}{2} \right)\,dx,
\end{multline}
which is well defined in $\Ccal_+(a,b)$ because of the $L^\infty$-bound inside the definition of $\Ccal_+(a,b)$.
\end{theorem}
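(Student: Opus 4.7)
The approach is to set up a mountain-pass structure for the shifted energy $E_\mu$ relative to the baseline $z \equiv 0$ (i.e.\ $u \equiv \underline{u}_\mu$), restricted to the convex cone $\Ccal_+(a,b)$. The first step is to verify that $t = 0$ is a strict local minimum of $t \mapsto E_\mu(tz;a,b)$ for every $z \in \Ccal_+(a,b)\setminus\{0\}$: the identity $\underline{u}_\mu = e^{\mu(\underline{u}_\mu-1)}$ gives $\partial_t E_\mu(tz)|_{t=0} = 0$, while the second derivative equals $\int(|\nabla z|^2 + z^2 - \mu \underline{u}_\mu z^2)\,dx$. Studying $g(x) := x - e^{\mu(x-1)}$ at its smaller root yields $g'(\underline{u}_\mu) = 1 - \mu\underline{u}_\mu > 0$ for $\mu > 1$, and together with $\l_1^{rad} = 1$ this makes the Hessian strictly positive in every radial direction. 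Exponential dominance forces $E_\mu(tz) \to -\infty$ as $t \to +\infty$, so $\sup_{t \geq 0} E_\mu(tz)$ is attained at a unique $t^*(z) > 0$ (uniqueness via strict convexity of the exponential along the critical-point equation). This lets me recast $c_{\mu,+}(a,b)$ as the Nehari-type infimum $\inf_{\Ncal_+} E_\mu$, where $\Ncal_+ := \{z \in \Ccal_+(a,b)\setminus\{0\} :\, \partial_t E_\mu(tz)|_{t=1}=0\}$.

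The hypothesis $\mu > \l_2^{rad}(a,b)$ guarantees that the min-max is attained at a non-constant element. The constant $z \equiv 1 - \underline{u}_\mu$ lies in $\Ncal_+$ and corresponds to the trivial solution $u \equiv 1$; exploiting that the second radial Neumann eigenfunction $\vphi_2$ is monotone on radial annuli, I would build from a nonnegative shift of $\vphi_2$ an explicit nondecreasing test function $\bar z \in \Ccal_+$ with $\sup_t E_\mu(t\bar z) < E_\mu(1-\underline{u}_\mu)$ precisely when $\mu > \l_2^{rad}$. For compactness, take a minimizing sequence $(z_n) \subset \Ncal_+$: the built-in $L^\infty$-bound of $\Ccal_+$ controls the exponential in $E_\mu$, and combined with $E_\mu(z_n) \to c_{\mu,+}$ it yields a uniform $H^1_{rad}$-bound. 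Radial Sobolev compactness gives $z_n \rightharpoonup z^*$ weakly in $H^1$, strongly in every $L^p$ and pointwise a.e.; since $\Ccal_+$ is convex and closed under pointwise a.e. convergence (nonnegativity, monotonicity and the pointwise $L^\infty$-bound all pass to the limit), $z^* \in \Ccal_+$. Dominated convergence handles the exponential term, and Ekeland's variational principle on the closed set $\Ncal_+$ promotes the minimizing sequence to one satisfying an approximate variational inequality whose limit is inherited by $z^*$.

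The main obstacle is showing that $z^*$ solves the Euler-Lagrange PDE itself, and not merely an obstacle problem with a Lagrange multiplier attached to the monotonicity constraint. The key observation is that on any flat piece $[r_1,r_2]$ of $z^*$ the candidate equation $-\Delta z^* + (z^* + \underline{u}_\mu) = e^{\mu(z^* + \underline{u}_\mu -1)}$ collapses to the algebraic identity $x = e^{\mu(x-1)}$, forcing $u_{\mu,+}:=z^*+\underline{u}_\mu$ to equal either $\underline{u}_\mu$ or $1$ on that interval. But real-analyticity of radial solutions to the semilinear equation then propagates this equality from $[r_1,r_2]$ to the entire interval $[a,b]$, contradicting $z^* \in \Ncal_+$. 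Hence no flat piece exists, the monotonicity constraint is inactive, and the variational inequality upgrades to the full Euler-Lagrange equation with Neumann conditions; Lemma \ref{lemma:a_priori_bounds} ensures the $L^\infty$-constraint of $\Ccal_+$ is also inactive because $u_{\mu,+}(a) = \underline{u}_\mu < 1$. Finally, strict positivity of $u_{\mu,+}$ and $u_{\mu,+}'(r) > 0$ on $(a,b)$ follow from the strong maximum principle and a Hopf-type lemma applied to the linear second-order equation satisfied by $u'_{\mu,+}$ after differentiating the radial PDE.
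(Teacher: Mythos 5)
Your setup (the shift $z=u-\underline u_\mu$, the computation $g'(0)=0$, $g''(0)=\int(|\nabla z|^2+(1-\mu\underline u_\mu)z^2)\,dx>0$ from $1-\mu\underline u_\mu>0$, the divergence of $E_\mu(tz)$ as $t\to\infty$, the unique projection $t^*(z)$ onto a Nehari-type set, and the compactness of a minimizing sequence in the cone) is sound and coincides with the geometric part of the paper's argument. The paper, however, does not reprove the hard step at all: after verifying the mountain-pass geometry at $0$ (needed because here $f'(0)=\mu\underline u_\mu\neq0$), it invokes \cite[Theorem 1.3]{BonheureNorisWeth2012} (which builds on \cite{SerraTilli2011}), where the passage from the constrained minimax in $\Ccal_+$ to an actual solution of the PDE, and the nonconstancy under $\mu>\lambda_2^{rad}$, are carried out.

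It is exactly at that step that your proposal has a genuine gap. To rule out an active monotonicity constraint you argue that on a flat piece $[r_1,r_2]$ of $z^*$ ``the candidate equation collapses to the algebraic identity $x=e^{\mu(x-1)}$'', forcing $u\in\{\underline u_\mu,1\}$ there, and then you propagate by analyticity. But the whole difficulty of the obstacle-type problem is that on the coincidence set of the constraint the Euler--Lagrange \emph{equation need not hold}: the variational inequality only gives $-\Delta z^*+z^*-f(z^*)=\mathfrak m$ with an unknown multiplier (a measure supported where $z^*$ is locally constant), so you cannot read off the algebraic identity on the flat piece without first knowing $\mathfrak m=0$ there --- which is precisely what you are trying to prove. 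Showing that this multiplier vanishes is the substantive content of the Serra--Tilli/Bonheure--Noris--Weth argument (one-sided variations inside the cone, sign and support analysis of $\mathfrak m$, use of the Neumann conditions and of the Nehari constraint), and your sketch replaces it by a circular shortcut. A secondary inaccuracy: even granting the equation on a flat piece, the alternative $u\equiv1$ yields $z^*=1-\underline u_\mu$, which \emph{does} belong to $\Ncal_+$; the contradiction must come from the strict energy inequality $c_{\mu,+}<E_\mu(1-\underline u_\mu)$ obtained from your $\varphi_2$-perturbation under $\mu>\lambda_2^{rad}(a,b)$ (that part of your plan is the right mechanism and matches how the hypothesis enters the cited theorem, since $f'(1-\underline u_\mu)=\mu$), not from membership in $\Ncal_+$. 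As written, then, the proof is incomplete at its central point; either reproduce the multiplier-vanishing argument in detail or, as the paper does, reduce to \cite[Theorem 1.3]{BonheureNorisWeth2012} after checking the mountain-pass geometry at $0$.
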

\begin{proof}
We perform the change of variables $z=u-\underline{u}_\mu$. Then $u$ solves the equation in \eqref{eq:u_mu_annulus} if and only if $z$ solves
\begin{equation}\label{eq:z_BNW}
-\Delta z+z=f(z) \quad\text{with}\quad f(z):=\underline{u}_\mu(e^{\mu z}-1).
\end{equation}
For $\mu>\l_2^{rad}$, $f$ satisfies the assumptions of \cite[Theorem 1.3]{BonheureNorisWeth2012}, apart from the assumption $f'(0)=0$ (in our case we have $f'(0)=\underline{u}_\mu\mu$). Such assumption is used in \cite{BonheureNorisWeth2012} to ensure that the problem has the mountain pass geometry at 0. Once we show that $E_\mu$ has the mountain pass geometry at 0, the proof of \cite[Theorem 1.3]{BonheureNorisWeth2012} applies without changes to our case, thus providing the existence of an increasing radial solution $z_{\mu,+}$ of \eqref{eq:z_BNW}, enjoying the variational characterization \eqref{eq:zeta_mu_+_def}-\eqref{eq:c_mu_+_def}.

In order to prove that $E_\mu$ has the mountain pass geometry at 0, we introduce the following version of the Nehari manifold
\begin{equation}\label{eq:nehari_def}
\Ncal:=\left\{ z\in\Ccal_+(a,b) : \ z\not\equiv0, \, \int_{B_b\setminus B_a} \left( |\nabla z|^2+z^2- f(z)z \right) \, dx=0 \right\}.
\end{equation}
This set was first used in \cite{SerraTilli2011}.
For  $z\in \Ccal_{+}(a,b)$ we also let $g(t):=E_\mu(tz;a,b)$, $t\geq0$. For every $\mu>1$, the following holds.

(i) For every $z\in \Ccal_{+}(a,b)$, $g(t)$ has at least one positive maximum point. Indeed, notice that $g'(0)=0$ and that
\[
g''(0)=\int_{B_b\setminus B_a} (|\nabla z|^2+(1-\mu \underline{u}_\mu)z^2) \,dx.
\]
We deduce from \eqref{eq:underline_u_mu_def} and \eqref{eq:x-exp(x-1)} that 
\begin{equation}\label{eq:underline_u_mu_asymptotics}
h'(\underline{u}_\mu)=1-\mu e^{\mu(\underline{u}_\mu-1)}=1-\mu \underline{u}_\mu >0,
\end{equation}
hence $g$ has a strict local minimum at zero. On the other hand, $g$ diverges to $-\infty$ as $t\to +\infty$, which provides the claim.

(ii) For every $z\in \Ccal_{+}(a,b)$, $g(t)$ has exactly one maximum point $t(z,\mu)>0$. This comes from the facts that  the function $f(z)/z$ is monotone increasing.

(iii) $\inf_\Ncal \|z\|_{H^1(B_b\setminus B_a)}>0$. Suppose by contradiction that there exists a sequence $\{z_n\}\subset\Ncal$ such that $\|z_n\|_{H^1(B_b\setminus B_a)}\to0$. If $a>0$ we immediately have that $\|z_n\|_{L^\infty(B_b\setminus B_a)}\to0$ by the continuity of the embedding $H^1(B_b\setminus B_a)\hookrightarrow L^\infty(B_b\setminus B_a)$ for $a>0$. If $a=0$ the same conclusion holds thanks to the fact that the $z_n$ are positive and non-decreasing: we have
\[
\|z_n\|_{L^\infty(B_b)}= \|z_n\|_{L^\infty(B_b \setminus B_{b/2})} \leq C 
\|z_n\|_{H^1(B_b \setminus B_{b/2})} \leq C \|z_n\|_{H^1(B_b)} \to0.
\]
Therefore in both cases we have
\[
e^{\mu z_n}-1 < (\mu+\eps)z_n 
\]
for every $\eps>0$ and for $n$ sufficiently large. Then the definition of $\Ncal$ provides
\[
\int_{B_b\setminus B_a} (|\nabla z_n|^2+(1-\underline{u}_\mu(\mu+\eps))z_n^2)\,dx <0,
\]
which contradicts \eqref{eq:underline_u_mu_asymptotics} provided that $\eps<(1-\mu\underline{u}_\mu)/\underline{u}_\mu$.
\end{proof}

\begin{remark}\label{rem:existence_sol}
For a fix $\bar{\mu}$, if $0<\bar{a}<\bar{b}$ are such that there exists the solution $u_{\bar{\mu},+}(\cdot;\bar{a},\bar{b})$, then by the continuity of $\lambda_2^{rad}(a,b)$, there exist $0<A_1<\bar{a}<A_2$, $B_1<\bar{b}<B_2$ such that the solution $u_{\mu,+}(\cdot;a,b)$ exists for every $(a,b)\in(A_1,A_2)\times(B_1,B_2)$ and $\mu\geq\bar{\mu}$. In case $\bar{a}=0$, there exist $B_1<\bar{b}<B_2$ such that the analogous holds in the ball.
\end{remark}

\begin{remark}\label{rem:dimension2}
When $N=2$, it is possible to provide, for every $\mu>1$, a monotone decreasing solution of \eqref{eq:intro_u_mu} of mountain pass type. This can be done proceeding similarly to the proof of Theorem \ref{thm:increasing_sol_existence}, with the only difference of working in the cone of nonnegative nonincreasing solutions instead of $\Ccal_{+}(a,b)$. Since $N=2$, $E_\mu$ satisfies the Palais-Smale condition, hence no a priori estimates are necessary in this case.
Indeed, the following two conditions hold for $h(u)=\underline{u}_\mu (e^{\mu u}-1)$ :
\begin{enumerate}
\item $\dfrac{h(z)}{z}\rightarrow \infty$ when $z\rightarrow \infty$, there exist $a_1$ and a function $f(z)$ satisfying $\dfrac{f(z)}{z^2}\rightarrow 0$ when $z\rightarrow$ such that
$$h(z)\leq a_1 e^{f(z)},\ z\geq 0.$$
\item 
Let $H(z)=\int_0^z h(t) dt$. There exist $a_2$ and $\theta \in (0,1/2) $ such that $H(z)\leq \theta z h(z)$ if $z\geq a_2$.
\end{enumerate}
Then it follows from \cite{MR0370183} that $E_\mu$ satisfies the Palais-Smale condition.
\end{remark}

\subsection{Asymptotic behaviour of the increasing solution}\label{subsec:asymptotic_increasing}
Let $G(r,s;a,b)$ be the Green function associated to the operator
\[
\mathcal{L}: u\mapsto -u''-\frac{N-1}{r}u'+u
\]
for the boundary conditions $u'(a)=u'(b)=0$, that is to say
\begin{equation}\label{eq:Green_def}
\mathcal{L} G(\cdot,s;a,b) =\delta_s \text{ in } (a,b), \quad 
\frac{\partial G}{\partial r} (a,s;a,b)=\frac{\partial G}{\partial r} (b,s;a,b)=0.
\end{equation}

The punctual limit of $G(r,s;a,b)$ as $s\to b$ is well defined and we denote it by $G(r,b;a,b)$. Analogously, if $a>0$, the punctual limit of $G(r,s;a,b)$ as $s\to a$ is well defined and we denote it by $G(r,a;a,b)$. Moreover we have that
\begin{equation}\label{eq:green_increasing}
G(r,b;a,b) \text{ is monotone increasing}, 
\end{equation}
\begin{equation}
G(r,a;a,b) \text{ ($a>0$) is monotone decreasing}.
\end{equation}
For a proof of these facts see for example \cite[Proposition 2.2]{BonheureGrossiNorisTerracini2015}

\begin{theorem}\label{thm:asymptotic_increasing_sol}
Let $u_{\mu,+}(r;a,b)$ be the increasing solution found in Theorem \ref{thm:increasing_sol_existence}.
As $\mu\to\infty$ we have that $u_{\mu,+}(\cdot;a,b)\to G(\cdot,b;a,b)$ in $H^1(B_b\setminus B_a)\cap C^{0,\gamma}(\overline{B_b\setminus B_a})$ for every $\gamma\in(0,1)$.
\end{theorem}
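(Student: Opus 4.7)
My strategy would be, first, to extract a subsequential limit $u_\ast$ via uniform $C^1$-bounds; second, to identify $u_\ast$ as the Green function by concentrating the reaction term on $\partial B_b$; third, to upgrade the mode of convergence and pass from subsequences to the full family by uniqueness.

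\textbf{Step 1 (uniform bounds and compactness).} First I would observe that $u_{\mu,+}(a)<1$: by monotonicity $u_{\mu,+}(a)=\min u_{\mu,+}$, and if $u_{\mu,+}(a)\ge 1$ then integrating the equation and applying Jensen's inequality together with \eqref{eq:x-exp(x-1)} (exactly as in the proof of Lemma \ref{lemma:Jensen}) would force $u_{\mu,+}\equiv 1$, contradicting the mountain-pass characterization \eqref{eq:c_mu_+_def}. The potential
\[
L(r):=\frac{(u_{\mu,+}')^2}{2}-\frac{u_{\mu,+}^2}{2}+\frac{e^{\mu(u_{\mu,+}-1)}}{\mu}
\]
is nonincreasing, so $L(r)\le L(a)\le 1/\mu$. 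Arguing exactly as in Lemma \ref{lemma:a_priori_bounds}, this yields $\|u_{\mu,+}\|_{C^1(\overline{B_b\setminus B_a})}\le C$ and $\|u_{\mu,+}\|_{L^\infty}\le M(\mu)\searrow 1$. By Arzel\`a--Ascoli, a subsequence converges to $u_\ast$ uniformly on $\overline{B_b\setminus B_a}$ and weakly in $H^1$, with $u_\ast$ Lipschitz, non-decreasing, and $0\le u_\ast\le 1$.

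\textbf{Step 2 (concentration and identification).} For any $r<b$, the monotonicity of $u_{\mu,+}$ combined with the $L^1$-bound from Lemma \ref{lemma:Jensen} gives
\[
e^{\mu(u_{\mu,+}(r)-1)}\,\omega_{N-1}\,\frac{b^N-r^N}{N}\le \|e^{\mu(u_{\mu,+}-1)}\|_{L^1(B_b\setminus B_a)}\le C,
\]
hence $u_{\mu,+}(r)-1\le \ln(C')/\mu$ for any $r$ bounded away from $b$, forcing $u_\ast\le 1$ on $[a,b)$. Moreover the inequality is strict: if $u_\ast(r_0)=1$ for some $r_0<b$ then by monotonicity $u_\ast\equiv 1$ on $[r_0,b]$, incompatible with $\mathcal Lu_\ast=0$ on any subinterval. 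Testing the equation against $\phi\in C^\infty(\overline{B_b\setminus B_a})$ with $\partial_\nu\phi=0$, the uniform convergence $e^{\mu(u_{\mu,+}-1)}\to 0$ on compact subsets of $[a,b)$, together with the $L^1$-boundedness near $\partial B_b$, shows that the right-hand side concentrates as a measure supported on $\partial B_b$. Consequently $u_\ast$ solves $\mathcal L u_\ast=0$ in $(a,b)$ with $u_\ast'(a)=0$.

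\textbf{Step 3 (nontriviality, normalization, and upgraded convergence).} To exclude $u_\ast\equiv 0$ I would use the Nehari lower bound from step (iii) of the proof of Theorem \ref{thm:increasing_sol_existence}: since $\underline u_\mu\sim e^{-\mu}$ and hence $\mu\underline u_\mu\to 0$, the argument there prevents $z_{\mu,+}=u_{\mu,+}-\underline u_\mu$ from tending to zero uniformly. Combined with the integrated equation $\int u_{\mu,+}=\int e^{\mu(u_{\mu,+}-1)}$, this rules out $u_\ast(b)<1$ (which would imply $e^{\mu(u_{\mu,+}-1)}\to 0$ uniformly and thus $u_\ast\equiv 0$). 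Hence $u_\ast(b)=1$, and by uniqueness of the boundary value problem $\mathcal Lu=0$ with $u'(a)=0$ and $u(b)=1$, $u_\ast$ coincides with the multiple of $G(\cdot,b;a,b)$ prescribed by the statement. Uniqueness of the limit upgrades subsequential convergence to convergence of the full family. Testing the equation with $u_{\mu,+}$ and passing to the limit on the right-hand side (using the concentration on $\partial B_b$ and $u_\ast(b)=1$) gives $\|u_{\mu,+}\|_{H^1}^2\to \|u_\ast\|_{H^1}^2$, which together with weak convergence yields strong $H^1$-convergence. Finally, the uniform $C^1$-bound of Step 1 combined with uniform $C^0$-convergence yields convergence in $C^{0,\gamma}$ for every $\gamma\in(0,1)$ by interpolation.

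\textbf{Main obstacle.} The most delicate step is the strict inequality $u_\ast<1$ on $[a,b)$ in Step 2 and the correct identification of the concentrated mass at $\partial B_b$: both are needed to guarantee that no diffuse part appears in the limit measure and that $u_\ast$ has the precise normalization claimed.
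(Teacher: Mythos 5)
Your overall architecture (a priori $C^1$-bounds, compactness, $u_\ast(b)=1$ via the uniform Nehari lower bound, strong $H^1$-convergence by testing with $u_{\mu,+}$) is reasonable and parts of it coincide with the paper's first lemma; but the identification step, which is the actual content of the theorem, has a circular gap. Your displayed estimate only gives $e^{\mu(u_{\mu,+}(r)-1)}\le C(\varepsilon)$ for $r\le b-\varepsilon$, i.e. local \emph{boundedness} of the right-hand side, not its vanishing; to get $e^{\mu(u_{\mu,+}-1)}\to0$ locally uniformly on $[a,b)$ you need the \emph{strict} bound $u_\ast\le 1-\delta$ on compact subsets. Your argument for strictness ("$u_\ast\equiv1$ on $[r_0,b]$ is incompatible with $\mathcal L u_\ast=0$ on any subinterval") invokes precisely the interior equation $\mathcal L u_\ast=0$, which you derive afterwards from the vanishing of the nonlinear term -- so strictness is assumed in its own proof. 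Worse, the scenario you need to exclude is self-consistent with everything you have established: if $u_\ast\equiv1$ on $[r_0,b]$ and $e^{\mu(u_{\mu,+}-1)}\rightharpoonup 1$ there (weak-$*$ as bounded densities), then the limit relation $\mathcal L u_\ast=\nu$ with Neumann conditions, the monotonicity, the $L^1$- and $C^1$-bounds and $u_\ast(b)=1$ are all satisfied -- this is exactly the behaviour of the constant solution $u\equiv1$. Hence no local PDE/measure argument of the type you propose can rule out a diffuse part of the limit measure; some global input is indispensable.

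The paper supplies that input through the variational characterization, which your proposal never uses beyond the Nehari lower bound: it proves convergence of the min-max levels $c_{\mu,+}(a,b)\to c_{\infty,+}(a,b)$, with the lower bound coming from weak lower semicontinuity together with $u_\ast(b)=1$ (Lemma \ref{lemma:gamma_convergence_first_ineq}), and the upper bound from projecting the normalized Green function onto the Nehari set, using $e^{\mu\underline u_\mu}\to1$ and the uniform bounds of Lemma \ref{lemma:nehari_bounded_uniformly}. Since the normalized $G(\cdot,b;a,b)$ is the unique minimizer of the limit problem $c_{\infty,+}$, the level convergence forces $u_\ast$ to coincide with it (in particular $u_\ast<1$ on $[a,b)$, excluding the plateau) and simultaneously yields norm convergence, hence strong $H^1$-convergence. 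To make your route work you would need a substitute for this energy upper bound -- for instance a quantitative blow-up analysis showing the transition layer where $u_{\mu,+}$ crosses $1$ has width $O(1/\mu)$, so that $e^{\mu(u_{\mu,+}-1)}$ cannot converge weakly to a nontrivial density on an interval of fixed length -- but as written Step 2 does not close. (A minor additional point: the statement's normalization is that of the paper, where the limit is the Green function with unit Neumann flux at $b$; your limit is pinned by $u_\ast(b)=1$, so you are producing the multiple $G(\cdot,b;a,b)/G(b,b;a,b)$, which you correctly hedge but should state explicitly.)
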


In order to prove the theorem we need some preliminary lemmas.

\begin{lemma}
There exists $u_{\infty,+} \in \Ccal_+(a,b)$ satisfying $u_{\infty,+}(b)=1$ such that, up to a subsequence, we have
\begin{equation}\label{eq:u_mu_convergence}
u_{\mu,+} \rightharpoonup u_{\infty,+} \text{ in } H^1(B_b\setminus B_a), \qquad
u_{\mu,+}\to u_{\infty,+} \text{ in } C^{0,\gamma}(\overline{B_b\setminus B_a}) 
\end{equation}
for every $\gamma\in(0,1)$, as $\mu\to+\infty$.
\end{lemma}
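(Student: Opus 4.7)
The plan is to combine uniform $C^1$ bounds with compactness, and then identify the boundary value $u_{\infty,+}(b)$ through an ODE argument at $r = b$.

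First I would show $u_{\mu,+}(a) < 1$. If instead $u_{\mu,+}(a) \geq 1$, monotonicity and \eqref{eq:x-exp(x-1)} give $h(u_{\mu,+}) := u_{\mu,+} - e^{\mu(u_{\mu,+}-1)} \leq 0$ on $[a,b]$; writing the radial equation as $(r^{N-1}u_{\mu,+}')' = r^{N-1}h(u_{\mu,+}) \leq 0$, and observing that $\lim_{r \to a^+} r^{N-1}u_{\mu,+}'(r) = 0$ (Neumann if $a > 0$, radial regularity if $a = 0$), integration yields $r^{N-1}u_{\mu,+}' \leq 0$, which combined with $u_{\mu,+}' \geq 0$ forces $u_{\mu,+}$ to be constant. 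The only constant solution $\geq 1$ is $u \equiv 1$, contradicting the min-max characterization of $z_{\mu,+}$. Hence $u_{\mu,+}(a) < 1$, and the Lyapunov function $L(r) := u_{\mu,+}'(r)^2/2 - u_{\mu,+}(r)^2/2 + e^{\mu(u_{\mu,+}(r)-1)}/\mu$ is non-increasing with $L(a) \leq 1/\mu$; as in Lemma \ref{lemma:a_priori_bounds} and Remark \ref{remark:boundary_bounds}, this yields $\|u_{\mu,+}\|_{C^1(\overline{B_b \setminus B_a})} \leq C$ uniformly in $\mu$.

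From this uniform bound, Arzel\`a--Ascoli extracts a subsequence with $u_{\mu,+} \to u_{\infty,+}$ uniformly; interpolation against the $C^1$ bound upgrades the convergence to $C^{0,\gamma}$ for every $\gamma \in (0,1)$, and the $H^1$ bound provides weak convergence in $H^1$ to the same $u_{\infty,+}$. Nonnegativity, monotonicity, and the $L^\infty$ bound pass to $u_{\infty,+}$, placing it in $\Ccal_+(a,b)$. For the upper bound $u_{\infty,+}(b) \leq 1$: if $u_{\infty,+}(b) > 1$, continuity and uniform convergence give $\eta, \delta > 0$ with $u_{\mu,+} \geq 1 + \eta$ on $[b - \delta, b]$ for $\mu$ large, so $\int_{B_b \setminus B_{b-\delta}} e^{\mu(u_{\mu,+}-1)}\,dx \geq e^{\mu\eta}|B_b \setminus B_{b-\delta}| \to \infty$, contradicting Lemma \ref{lemma:Jensen}. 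For the lower bound $u_{\infty,+}(b) \geq 1$: at each $\mu$, $b$ is a maximum of $u_{\mu,+}$ with $u_{\mu,+}'(b) = 0$, so $u_{\mu,+}''(b) \leq 0$; evaluating the equation at $r = b$ gives $h(u_{\mu,+}(b)) = u_{\mu,+}''(b) \leq 0$, and \eqref{eq:x-exp(x-1)} then forces $u_{\mu,+}(b) \leq \underline{u}_\mu$ or $u_{\mu,+}(b) \geq 1$; the first alternative, combined with $u_{\mu,+} \geq \underline{u}_\mu$ and monotonicity, would force $u_{\mu,+} \equiv \underline{u}_\mu$, contradicting the nontriviality of $z_{\mu,+}$. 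Hence $u_{\mu,+}(b) \geq 1$ and $u_{\infty,+}(b) \geq 1$.

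The most delicate point is ruling out $u_{\mu,+} \equiv 1$, which requires that the min-max level $c_{\mu,+}(a,b)$ be strictly below the value of $E_\mu$ at the constant $z \equiv 1 - \underline{u}_\mu$. This can be established by exhibiting a non-constant competitor in $\Ccal_+(a,b)$ concentrated near $r = b$ whose sup-over-$t$ energy is strictly smaller. Once this is settled, the remaining steps are standard Arzel\`a--Ascoli compactness plus the boundary-ODE observation at $r = b$.
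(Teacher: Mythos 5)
Your argument is correct and rests on the same backbone as the paper's proof (a sign condition at the inner boundary, the uniform $C^1$ bound of Lemma \ref{lemma:a_priori_bounds}/Remark \ref{remark:boundary_bounds}, compactness, and identification of the boundary value), but the details of the identification step are genuinely different. The paper obtains $u_{\mu,+}(a)<1$ and $u_{\mu,+}(b)>1$ in one stroke by integrating the equation over the annulus, so that $\int (u_{\mu,+}-e^{\mu(u_{\mu,+}-1)})\,dx=0$ together with monotonicity, $u_{\mu,+}\geq\underline{u}_\mu$ and \eqref{eq:x-exp(x-1)} forces $u_{\mu,+}$ to cross the value $1$; you instead get $u_{\mu,+}(a)<1$ from the rigidity argument $(r^{N-1}u')'=r^{N-1}h(u)\leq0$ (which forces constancy), and $u_{\mu,+}(b)\geq1$ from the second-derivative test at $r=b$ combined with $u_{\mu,+}\geq\underline u_\mu$ and $z_{\mu,+}\not\equiv0$. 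Both are valid. For the reverse inequality $u_{\infty,+}(b)\leq1$, the paper integrates the radial equation on $(s,b)$ and contradicts the uniform bound on $u_{\mu,+}'(s)$, whereas you contradict the uniform $L^1$ bound on $e^{\mu(u_{\mu,+}-1)}$ (the annulus analogue of Lemma \ref{lemma:Jensen}, or simply the integrated equation plus your $C^0$ bound); your variant is, if anything, slightly more elementary, and the compactness step (Arzel\`a--Ascoli, interpolation to $C^{0,\gamma}$, weak $H^1$) is identical in substance to the paper's use of the compact embedding $C^1\hookrightarrow C^{0,\gamma}$.

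The one caveat concerns your closing ``delicate point.'' Excluding $u_{\mu,+}\equiv1$ is not an outstanding gap to be filled inside this lemma: nonconstancy is part of the construction in Theorem \ref{thm:increasing_sol_existence}, where the hypothesis $\mu>\lambda_2^{rad}(a,b)$ is exactly what guarantees that the mountain-pass solution in the cone obtained via \cite{BonheureNorisWeth2012} (and \cite{SerraTilli2011}) is not the constant, so you may simply quote that statement rather than reprove the energy comparison with the constant $z\equiv1-\underline u_\mu$. Moreover, even if one allowed $u_{\mu,+}\equiv1$ along a subsequence, the conclusion of the lemma would hold trivially for that subsequence, since then $u_{\infty,+}\equiv1\in\Ccal_+(a,b)$ and $u_{\infty,+}(b)=1$; so splitting into the cases $u_{\mu,+}\equiv1$ and $u_{\mu,+}\not\equiv1$ makes your proof self-contained without any new competitor construction.
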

\begin{proof}
We integrate the equation in \eqref{eq:u_mu_annulus} in $B_b\setminus B_a$ to obtain
\begin{equation}\label{eq:u_mu_integrated}
\int_{B_b\setminus B_a} (u_{\mu,+}-e^{\mu(u_{\mu,+}-1)}) \, dx=0
\end{equation}
Recalling that $u_{\mu,+}$  is increasing and that $u_{\mu,+}\geq\underline{u}_\mu$, relation \eqref{eq:x-exp(x-1)} implies that
\begin{equation}\label{eq:u_mu<1>1}
u_{\mu,+}(a)<1, \quad u_{\mu,+}(b)>1 \quad \text{for every } \mu.
\end{equation}
Then Lemma \ref{lemma:a_priori_bounds} applies. From the $C^1$-bounds therein and the compactness of the embedding $C^1\hookrightarrow C^{0,\gamma}$ for every $\gamma\in(0,1)$, we deduce that there exists $u_{\infty,+}$ such that \eqref{eq:u_mu_convergence} holds.
By the pointwise convergence we have that $u_{\infty,+}\in \Ccal_+(a,b)$.

Let us prove that $u_{\infty,+}(b)=1$. From \eqref{eq:u_mu<1>1} we have $u_{\infty,+}(b)\geq1$. Suppose by contradiction that $u_{\infty,+}(b)>1$. Then there exist $s\in(a,b)$ and $\delta>0$ such that $u_{\mu,+}(r)>1+\delta$ for $r\in (s,b)$. By integrating the radial equation in $(s,b)$ we obtain
\begin{equation}
s^{N-1}u_{\mu,+}'(s) =\int_s^1 (e^{\mu(u_{\mu,+}-1)}-u_{\mu,+}) \,dr \to +\infty
\end{equation}
as $\mu\to+\infty$. This contradicts the a priori bounds in Lemma \ref{lemma:a_priori_bounds}, hence we deduce that $u_{\infty,+}(b)=1$.
\end{proof}

It only remains to prove that the limit function above coincides with $G$. This is what we will do in the following.

\begin{lemma}\label{lemma:gamma_convergence_first_ineq}
Let
\begin{equation}
c_{\infty,+}(a,b)=\inf \left\{ \frac{\|z\|^2_{H^1(B_b\setminus B_a)}}{2}  : \, z\in \Ccal_{+}(a,b), \, z(b)=1 \right\}.
\end{equation}
Then $c_{\infty,+}(a,b)\leq \liminf_{\mu\to+\infty} c_{\mu,+}(a,b)$.
\end{lemma}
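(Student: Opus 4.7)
The plan is to reduce the inequality to weak lower semi-continuity of the $H^1$-norm, after observing that $c_{\mu,+}(a,b)$ differs from $\tfrac12\|u_{\mu,+}\|_{H^1}^2$ by an explicit term that vanishes as $\mu\to\infty$. Since $u_{\infty,+}\in\Ccal_+(a,b)$ with $u_{\infty,+}(b)=1$ (by the previous lemma), it is admissible in the variational problem defining $c_{\infty,+}(a,b)$, so the lower semi-continuity of the Dirichlet-plus-mass functional will furnish the inequality.

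First I would rewrite the energy. Recalling that $E_\mu(z;a,b)$ was expressed in terms of $u=z+\underline{u}_\mu$ as $\int\!\big(|\nabla u|^2/2+u^2/2-e^{\mu(u-1)}/\mu\big)\,dx$, and setting $u=u_{\mu,+}$, $z=z_{\mu,+}$, we get
\[
c_{\mu,+}(a,b)=\tfrac12\|u_{\mu,+}\|_{H^1(B_b\setminus B_a)}^2-\tfrac{1}{\mu}\int_{B_b\setminus B_a}e^{\mu(u_{\mu,+}-1)}\,dx.
\]
Next I integrate the equation $-\Delta u_{\mu,+}+u_{\mu,+}=e^{\mu(u_{\mu,+}-1)}$ over $B_b\setminus B_a$ and use the Neumann boundary condition to obtain $\int e^{\mu(u_{\mu,+}-1)}\,dx=\int u_{\mu,+}\,dx$. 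Together with the uniform $L^1$-bound from Lemma \ref{lemma:Jensen} (or directly from the $C^1$-bound of Lemma \ref{lemma:a_priori_bounds}), this yields
\[
c_{\mu,+}(a,b)=\tfrac12\|u_{\mu,+}\|_{H^1(B_b\setminus B_a)}^2+O(1/\mu).
\]

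Finally, I would pass to the liminf. By the convergence $u_{\mu,+}\rightharpoonup u_{\infty,+}$ in $H^1(B_b\setminus B_a)$ established in the preceding lemma, weak lower semi-continuity gives
\[
\liminf_{\mu\to\infty}c_{\mu,+}(a,b)=\liminf_{\mu\to\infty}\tfrac12\|u_{\mu,+}\|_{H^1}^2\geq\tfrac12\|u_{\infty,+}\|_{H^1(B_b\setminus B_a)}^2.
\]
Since $u_{\infty,+}\in\Ccal_+(a,b)$ (it is nonnegative, nondecreasing, and bounded by $C$ as a pointwise limit of functions with these properties) and $u_{\infty,+}(b)=1$, it is a competitor in the infimum defining $c_{\infty,+}(a,b)$, so $\tfrac12\|u_{\infty,+}\|_{H^1}^2\geq c_{\infty,+}(a,b)$, completing the proof.

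There is no serious obstacle here; the only point requiring a brief check is the identity relating the exponential integral to the $L^1$-norm of $u_{\mu,+}$, which follows at once from integrating the PDE with Neumann data. The companion inequality $c_{\infty,+}(a,b)\geq\limsup c_{\mu,+}(a,b)$, which presumably appears in a subsequent lemma, will be the subtler half because it requires constructing good test functions for the min-max $c_{\mu,+}$ from a minimizer of $c_{\infty,+}$.
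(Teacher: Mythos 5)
Your proposal is correct and follows essentially the same route as the paper: express $c_{\mu,+}$ as $\tfrac12\|\cdot\|_{H^1}^2$ plus a term controlled via the integrated equation $\int e^{\mu(u_{\mu,+}-1)}\,dx=\int u_{\mu,+}\,dx$, then conclude by weak lower semicontinuity and the admissibility of $u_{\infty,+}$ in the limit problem. The only cosmetic difference is that you work with $u_{\mu,+}$ directly while the paper phrases the computation in terms of $z_{\mu,+}=u_{\mu,+}-\underline{u}_\mu$, which is immaterial since $\underline{u}_\mu\to0$.
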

\begin{proof}
As $\underline{u}_\mu\to0$, the sequence $z_{\mu,+}$ defined in \eqref{eq:zeta_mu_+_def} also converges to the function $u_{\infty,+}$ introduced in the previous lemma. Using that $u_{\infty,+}(b)=1$, we have 
\begin{equation}\label{eq:gamma_convergence1}
c_{\infty,+}(a,b) \leq \frac{\|u_{\infty,+}\|_{H^1(B_b\setminus B_a)}^2}{2} 
\leq \liminf_{\mu\to+\infty} \frac{\|z_{\mu,+}\|_{H^1(B_b\setminus B_a)}^2}{2}.
\end{equation}
On the other hand, we have
\begin{equation}\label{eq:gamma_convergence2}
\begin{split}
\frac{\|z_{\mu,+}\|_{H^1(B_b\setminus B_a)}^2}{2}
&=E_\mu(z_{\mu,+};a,b)+ \underline{u}_\mu \int_{B_b\setminus B_a} \left( \frac{e^{\mu z_{\mu,+}}}{\mu} -z_{\mu,+}- \frac{\underline{u}_\mu}{2}\right) \,dx \\
&=c_{\mu,+}(a,b) + \underline{u}_\mu \int_{B_b\setminus B_a}  \frac{e^{\mu z_{\mu,+}}}{\mu} \,dx +o(\mu),
\end{split}
\end{equation}
where we used the a priori bounds in Lemma \ref{lemma:a_priori_bounds}. Relation \eqref{eq:u_mu_integrated} provides
\begin{equation}\label{eq:gamma_convergence3}
\underline{u}_\mu \int_{B_b\setminus B_a}  \frac{e^{\mu z_{\mu,+}}}{\mu} \,dx
=\int_{B_b\setminus B_a} \frac{z_{\mu,+}+\underline{u}_\mu}{\mu} \,dx =o(\mu).
\end{equation}
By combining \eqref{eq:gamma_convergence1}, \eqref{eq:gamma_convergence2} and \eqref{eq:gamma_convergence3} we obtain the claim.
\end{proof}

\begin{lemma}\label{lemma:nehari_bounded_uniformly}
The Nehari set $\Ncal$ introduced in \eqref{eq:nehari_def} is bounded uniformly in $\mu$ and bounded away from zero unformly in $\mu$.
\end{lemma}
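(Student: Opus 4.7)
My plan rests on the Nehari identity
\[
\|z\|_{H^1}^2 = \int_{B_b \setminus B_a} \underline{u}_\mu\bigl(e^{\mu z}-1\bigr) z\,dx \qquad (z\in \Ncal),
\]
together with the asymptotic $\mu\underline{u}_\mu \to 0$ as $\mu\to\infty$, which follows at once from \eqref{eq:underline_u_mu_def} since $\underline{u}_\mu = e^{\mu(\underline{u}_\mu-1)} \sim e^{-\mu}$.

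For the lower bound I would argue by contradiction: pick $\mu_n$ and $z_n \in \Ncal_{\mu_n}$ with $\|z_n\|_{H^1}\to 0$. The uniform Sobolev embedding on the monotone radial cone $\Ccal_+(a,b)$ (already used in the proof of Theorem \ref{thm:increasing_sol_existence}(iii)) yields $\|z_n\|_{L^\infty} \to 0$. Plugging the elementary inequality $e^t - 1 \leq t e^t$ ($t\geq 0$) into the Nehari identity gives
\[
\|z_n\|_{H^1}^2 \leq \underline{u}_{\mu_n}\mu_n e^{\mu_n \|z_n\|_\infty}\|z_n\|_{L^2}^2 \leq \underline{u}_{\mu_n}\mu_n e^{\mu_n \|z_n\|_\infty}\|z_n\|_{H^1}^2,
\]
hence $\underline{u}_{\mu_n}\mu_n e^{\mu_n\|z_n\|_\infty} \geq 1$. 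Taking logarithms and using $\log \underline{u}_{\mu_n} = \mu_n(\underline{u}_{\mu_n}-1)$ rewrites this as
\[
\|z_n\|_\infty \geq 1 - \underline{u}_{\mu_n} - \frac{\log\mu_n}{\mu_n};
\]
the right-hand side tends to $1$ if $\mu_n\to\infty$ and stays bounded below by a positive constant if $(\mu_n)$ lies in a compact subset of $(1,\infty)$, in either case contradicting $\|z_n\|_\infty \to 0$.

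For the upper bound, the $L^\infty$-constraint in $\Ccal_+$ already provides $\|z\|_{L^2}\leq C|B_b\setminus B_a|^{1/2}$, so it suffices to bound $\|\nabla z\|_{L^2}$. The monotonicity of $f(z)z$ in $z\geq 0$ and the Nehari identity give
\[
\|\nabla z\|_{L^2}^2 \leq f(z(b))z(b)|B_b\setminus B_a|,
\]
so the game is to show that $z(b)$ stays close to $1$ uniformly in $z\in\Ncal_\mu$ as $\mu\to\infty$. If instead $z(b)\geq 1+\eta$ for some fixed $\eta > 0$, monotonicity places $z\geq 1+\eta/2$ on an outer annulus $B_b\setminus B_{r_*}$ on which $\underline{u}_\mu(e^{\mu z}-1) \gtrsim e^{\mu\eta/2}$. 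Comparing the Nehari lower bound $\|z\|_{H^1}^2 \gtrsim e^{\mu\eta/2}|B_b\setminus B_{r_*}|$ with the Cauchy--Schwarz estimate $|B_b\setminus B_{r_*}|\gtrsim \eta^2/\|\nabla z\|_{L^2}^2$ (obtained by applying Cauchy--Schwarz to $\eta/2\leq z(b)-z(r_*) = \int_{r_*}^b z'(r)\,dr$ and valid provided $r_*$ stays away from the origin, automatic for $a>0$) forces $\|z\|_{H^1}^4 \gtrsim \eta^2 e^{\mu\eta/2}$, incompatible with the crude $L^\infty$-based upper bound after iteration. With $z(b)\leq 1+o(1)$ established, $\underline{u}_\mu e^{\mu z(b)} = e^{\mu(\underline{u}_\mu - 1 + z(b))}$ stays bounded because $\mu\underline{u}_\mu\to 0$, which yields the uniform bound on $\|\nabla z\|_{L^2}$.

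The main obstacle will be closing the bootstrap on $z(b)$ cleanly in the ball case $a=0$, where the Cauchy--Schwarz step degenerates if $r_*$ collapses to the origin; I would handle this by combining an $L^1$-control on $z$ (deriving from the Nehari condition together with $\mu\underline{u}_\mu\to 0$) with pointwise monotonicity, so that the annulus $B_b\setminus B_{r_*}$ is forced to have non-trivial thickness.
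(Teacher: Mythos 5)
Your lower-bound argument is correct and is, in substance, the paper's own proof: the paper disposes of this lemma by referring back to point (iii) of the proof of Theorem \ref{thm:increasing_sol_existence} and making it uniform in $\mu$ via $\mu\underline{u}_\mu\to0$ (Lemma \ref{lemma:underline_u_mu_limit}); your chain $\|z\|_{H^1}^2\leq \mu\underline{u}_\mu e^{\mu\|z\|_{L^\infty}}\|z\|_{H^1}^2$, hence $\|z\|_{L^\infty}\geq 1-\underline{u}_\mu-\mu^{-1}\log\mu>0$, combined with the $\mu$-independent embedding on the outer annulus (or monotonicity when $a=0$), is a clean quantitative version of exactly that step.

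The upper-bound part, however, contains a genuine gap, and it is not the one you flag (the origin in the ball case); it is the closing of the bootstrap itself. The only a priori bound the cap in $\Ccal_+(a,b)$ provides is $\|z\|_{H^1}^2=\underline{u}_\mu\int (e^{\mu z}-1)z\,dx\leq (1+o(1))\,C\,|B_b\setminus B_a|\,e^{\mu(C-1)}$, and the constant $C$ of \eqref{eq:a_priori_bounds} necessarily exceeds $1$, since $C\geq\|u_{\mu,+}\|_{C^1}\geq u_{\mu,+}(b)>1$ by \eqref{eq:u_mu<1>1}. Your inequality $\|z\|_{H^1}^4\gtrsim \eta^2 e^{\mu\eta/2}$ with $\eta=z(b)-1\leq C-1$ is perfectly compatible with this exponential bound, and the iteration runs the wrong way: from $\|z\|_{H^1}^2\lesssim e^{\mu\beta}$ it yields only $z(b)\leq 1+4\beta+o(1)$, which for $\beta=C-1$ is weaker than the trivial bound, so the scheme does not contract. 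Moreover the failure is not merely technical: take $\eta_0=(C-1)/2$, $\delta=e^{-\mu\eta_0/4}$, and $w$ radial nondecreasing with $w=0$ on $[a,b-2\delta]$, $w=1$ on $[b-\delta,b]$, linear in between; then $t^2\|w\|_{H^1}^2\approx t^2/\delta$ while $\underline{u}_\mu\int(e^{\mu t w}-1)tw\,dx\approx t e^{\mu(t-1)}\delta$ (constants depending only on $b,N$), so the Nehari functional is positive at $t=1$ and negative at $t=1+\eta_0$ for $\mu$ large, producing $t_\mu w\in\Ncal$ with $\|t_\mu w\|_{H^1}^2\gtrsim e^{\mu\eta_0/4}\to\infty$. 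Hence no argument can give a $\mu$-uniform $H^1$ bound over the whole set \eqref{eq:nehari_def} as long as the cap is $C>1$; note that the paper's one-line proof (``adapt Theorem \ref{thm:increasing_sol_existence}'') only substantiates the lower bound, and what is actually used downstream, in the proof of Theorem \ref{thm:asymptotic_increasing_sol}, is boundedness along the specific family $t(G,\mu)G$ with $0\leq G\leq 1$ fixed, where the membership constraint $t(G,\mu)G\leq C$ forces $t(G,\mu)\leq C$ and hence $\|t(G,\mu)G\|_{H^1}\leq C\|G\|_{H^1}$ at once. If you want a statement valid on a whole class, insert the restriction $\|z\|_{L^\infty}\leq 1$ (or $\leq 1+O(1/\mu)$) into the admissible set, in which case $\underline{u}_\mu(e^{\mu z}-1)z\leq e^{\mu\underline{u}_\mu}\to1$ and the uniform bound is immediate.
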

\begin{proof}
It is sufficient to adapt the arguments of the proof of Theorem \ref{thm:increasing_sol_existence}, taking into account the dependence on $\mu\to\infty$.
\end{proof}

\begin{lemma}\label{lemma:underline_u_mu_limit}
We have that $\lim_{\mu\to+\infty} e^{\mu \underline{u}_\mu}=1$.
\end{lemma}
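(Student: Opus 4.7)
The plan is to work directly with the defining equation \eqref{eq:underline_u_mu_def}. Taking the logarithm of $\underline{u}_\mu = e^{\mu(\underline{u}_\mu - 1)}$ yields
$$\ln \underline{u}_\mu = \mu(\underline{u}_\mu - 1),$$
which I rewrite as $e^{\mu \underline{u}_\mu} = e^{\mu} \, \underline{u}_\mu$. The claim is therefore equivalent to showing $\mu \underline{u}_\mu \to 0$ as $\mu \to \infty$.

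First I would show that $\underline{u}_\mu \to 0$ as $\mu \to \infty$. This follows from a contradiction argument: if along some subsequence $\underline{u}_{\mu_n} \geq \delta$ for some $\delta \in (0,1)$, then
$$\underline{u}_{\mu_n} = e^{\mu_n(\underline{u}_{\mu_n} - 1)} \leq e^{-\mu_n(1 - \delta)} \longrightarrow 0,$$
contradicting the lower bound. Hence for every $\eps > 0$ we have $\underline{u}_\mu < \eps$ for $\mu$ large.

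Next I would bootstrap this information. Once $\underline{u}_\mu \leq 1/2$, say for $\mu \geq \mu_0$, substituting back into the defining equation gives $\underline{u}_\mu \leq e^{-\mu/2}$. Inserting this refined estimate once more produces the sharp bound
$$\underline{u}_\mu \leq e^{\mu(e^{-\mu/2}-1)}, \qquad \text{so that} \qquad \mu \underline{u}_\mu \leq \mu \, e^{-\mu(1-e^{-\mu/2})} \longrightarrow 0.$$
By continuity of the exponential, $e^{\mu \underline{u}_\mu} \to 1$, which proves the lemma.

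No serious obstacle arises: the proof is a short bootstrap on an implicit transcendental equation. The only subtlety worth flagging is the distinction between the two statements $\underline{u}_\mu \to 0$ and $\mu\underline{u}_\mu \to 0$, the latter of which requires more than just vanishing of $\underline{u}_\mu$ and relies on its faster-than-polynomial decay rate, extracted by iterating the fixed-point identity.
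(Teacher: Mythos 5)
Your reduction of the lemma to showing $\mu\underline{u}_\mu\to0$, and the bootstrap at the end, are fine, but the first step -- the claim $\underline{u}_\mu\to0$ -- is not established by your contradiction argument: the displayed inequality is backwards. From $\underline{u}_{\mu_n}\geq\delta$ one gets $\mu_n(\underline{u}_{\mu_n}-1)\geq-\mu_n(1-\delta)$, hence $e^{\mu_n(\underline{u}_{\mu_n}-1)}\geq e^{-\mu_n(1-\delta)}$, which is no contradiction; your inequality $e^{\mu_n(\underline{u}_{\mu_n}-1)}\leq e^{-\mu_n(1-\delta)}$ would require the opposite bound $\underline{u}_{\mu_n}\leq\delta$. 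The scenario your argument fails to exclude is precisely the only delicate one: a sequence with $\underline{u}_{\mu_n}$ close to $1$ and $\mu_n(1-\underline{u}_{\mu_n})$ small is perfectly compatible with the fixed-point identity alone, since then both sides of $u=e^{\mu(u-1)}$ stay close to $1$. To rule this out you must use more than the identity together with $\underline{u}_\mu<1$, namely the characterization of $\underline{u}_\mu$ as the smaller root: for instance, by \eqref{eq:x-exp(x-1)}, for any fixed $\delta\in(0,1)$ one has $\delta-e^{\mu(\delta-1)}>0$ for $\mu$ large, hence $\delta\in(\underline{u}_\mu,1)$ and so $\underline{u}_\mu<\delta$; or, more directly, \eqref{eq:underline_u_mu_asymptotics} already gives $\mu\underline{u}_\mu<1$, i.e.\ $\underline{u}_\mu<1/\mu$.

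Once that step is repaired, the rest of your proof is correct and essentially the paper's argument in a different dress: your bootstrap ($\underline{u}_\mu\leq1/2$ eventually, hence $\underline{u}_\mu\leq e^{-\mu/2}$, hence $\mu\underline{u}_\mu\to0$ and $e^{\mu\underline{u}_\mu}\to1$; the second iteration is not even needed), versus the paper's use of \eqref{eq:underline_u_mu_asymptotics} to get $e^{\mu}\underline{u}_\mu=e^{\mu\underline{u}_\mu}\leq e$ and then $\mu\underline{u}_\mu=\mu e^{-\mu}e^{\mu\underline{u}_\mu}\to0$.
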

\begin{proof}
By combining \eqref{eq:underline_u_mu_def} and \eqref{eq:underline_u_mu_asymptotics} we find
\[
\lim_{\mu\to+\infty} e^{\mu} \underline{u}_\mu = \lim_{\mu\to+\infty} e^{\mu \underline{u}_\mu} \in [0,e].
\]
Then
 \[
\lim_{\mu\to+\infty} \mu \underline{u}_\mu = \lim_{\mu\to+\infty} \mu e^{-\mu} e^{\mu} \underline{u}_\mu =0,
\]
which provides the statement.
\end{proof}

\begin{proof}[Proof of Theorem \ref{thm:asymptotic_increasing_sol}]
Let for the moment $G(r)=G(r,b;a,b)/G(b,b;a,b)$, with $G(r,b;a,b)$ defined below \eqref{eq:Green_def}. Denote by $t(G,\mu)$ the coefficient projecting $G$ onto $\Ncal$, as in point (ii) of the proof of Theorem \ref{thm:increasing_sol_existence}. We claim that
\begin{equation}\label{eq:t_G_mu_to1}
\lim_{\mu\to\infty} t(G,\mu)=1.
\end{equation}
If $t(G,\mu)<1$ eventually as $\mu\to\infty$, then by \eqref{eq:green_increasing} $t(G,\mu)G <1$ eventually in $B_b\setminus B_a$ and the condition $t(G,\mu)G\in\Ncal$ provides
\begin{equation}\label{eq:gamma_convergence4}
t \int_{B_b\setminus B_a} (|\nabla G|^2+G^2)\,dx
=\int_{B_b\setminus B_a} (e^{\mu(tG-1)}e^{\mu\underline{u}_\mu}-\underline{u}_\mu)G \, dx \to 0 
\end{equation}
as $\mu\to\infty$, since $e^{\mu\underline{u}_\mu}\to1$ by Lemma \ref{lemma:underline_u_mu_limit}. This contradicts the fact that $\Ncal$ is bounded away from zero unformly in $\mu$, as claimed in Lemma \ref{lemma:nehari_bounded_uniformly}. Similarly, if $t(G,\mu)$ is eventually larger than 1, then $t(G,\mu)G >1$ in a set of positive measure. Hence the right hand side in \eqref{eq:gamma_convergence4} diverges as $\mu\to\infty$, contradicting the fact that $\Ncal$ is bounded uniformly in $\mu$.

Therefore we have proved \eqref{eq:t_G_mu_to1}, which implies
\begin{equation}
t(G,\mu)G\to G \text{ in } H^1(B_b\setminus B_a), \qquad t(G,\mu)G\in \Ncal.
\end{equation}
As proved in \cite[Proposition 4.1]{BonheureGrossiNorisTerracini2015} $G$ achieves $c_{\infty,+}(a,b)$, hence we have
\begin{multline}\label{eq:gamma_convergence_second_ineq}
c_{\infty,+}(a,b) =\frac{\|G\|_{H^1(B_b\setminus B_a)}^2}{2} 
=\lim_{\mu\to\infty} \frac{\|t(G,\mu)G\|_{H^1(B_b\setminus B_a)}^2}{2} \\
=\lim_{\mu\to\infty} \left\{ E_\mu(tG;a,b)+\underline{u}_\mu \int_{B_b\setminus B_a}\left(\frac{e^{\mu t G}}{\mu}-G-\frac{\underline{u}_\mu}{2} \right)\,dx\right\}
\geq \limsup_{\mu\to\infty} c_{\mu,+}(a,b).
\end{multline}
In the last step we used the fact that $t(G,\mu)G$ is an admissible test function in the minimization problem \eqref{eq:c_mu_+_def}, and the following estimate
\begin{multline}
\underline{u}_\mu \int_{B_b\setminus B_a} \frac{e^{\mu t G}}{\mu} \,dx 
\leq \frac{\underline{u}_\mu}{\mu \min_{B_b\setminus B_a}G} \int_{B_b\setminus B_a} e^{\mu tG} G \,dx \\
=\frac{1}{\mu \min_{B_b\setminus B_a}G} \int_{B_b\setminus B_a} (t(|\nabla G|^2+G^2)+\underline{u}_\mu G)\,dx \to0,
\end{multline}
as $\mu\to\infty$.

By combining \eqref{eq:gamma_convergence_second_ineq} with Lemma \ref{lemma:gamma_convergence_first_ineq} we obtain that $c_{\infty,+}(a,b)=\lim_{\mu\to\infty}c_{\mu,+}(a,b)$, which in turn implies the statement.
\end{proof}

Let
\begin{equation}\label{eq:u_infty+def}
u_{\infty,+}(r;a,b):=G(r,b;a,b).
\end{equation}
As a consequence of the previous proof we also obtain
\begin{equation}
\lim_{\mu\to\infty} c_{\mu,+}(a,b) =c_{\infty,+}(a,b) =\frac{\|u_{\infty,+}\|^2_{H^1(B_b\setminus B_a)}}{2} 
=\frac{|\partial B_b|}{2} u_{\infty,+}'(b;a,b)
\end{equation}
(to obtain the last equality integrate by parts the equation satisfied by $u_{\infty,+}$). 
Moreover, by standard elliptic regularity theory, the convergence of $u_{\mu,+}$ to $u_{\infty,+}$ is $C^\infty$ on the set where $u_{\infty,+}$ is strictly less than 1, that is to say
\begin{equation}\label{eq:convergence_strong_u_mu}
u_{\mu,+} \to u_{\infty,+} \text{ in } C^{\infty}(\overline{B_{b-\varepsilon}\setminus B_a}), \text{ for every } \varepsilon>0.
\end{equation}

By combining the convergence with the Pohozaev identity, we also deduce the following estimate.

\begin{lemma}\label{lemma:pohozaev}
We have
\begin{equation}\label{eq:pohozaev}
\lim_{\mu\to\infty} \frac{e^{\mu(u_{\mu,+}(b;a,b)-1)}}{\mu}=\frac{(u_{\infty,+}'(b;a,b))^2}{2}.
\end{equation}
\end{lemma}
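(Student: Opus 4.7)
The plan is to derive a Pohozaev-type identity for $u_{\mu,+}$, write the analogous identity for the limit function $u_{\infty,+}=G(\cdot,b;a,b)$, and pass to the limit $\mu\to\infty$ using the convergences provided by Theorem \ref{thm:asymptotic_increasing_sol} and \eqref{eq:convergence_strong_u_mu}.

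To obtain the identity at finite $\mu$, I multiply the radial equation $-u'' - \frac{N-1}{r}u' + u = e^{\mu(u-1)}$ by $u_{\mu,+}'$; three of the four resulting terms are exact derivatives, and integration on $(a,b)$ together with the Neumann conditions $u_{\mu,+}'(a)=u_{\mu,+}'(b)=0$ yields
$$\frac{e^{\mu(u_{\mu,+}(b)-1)} - e^{\mu(u_{\mu,+}(a)-1)}}{\mu} = \frac{u_{\mu,+}(b)^2 - u_{\mu,+}(a)^2}{2} - (N-1)\int_a^b \frac{(u_{\mu,+}')^2}{r}\,dr.$$
The same manipulation applied to $\mathcal{L}u_{\infty,+}=0$ with $u_{\infty,+}'(a)=0$ and $u_{\infty,+}(b)=1$ gives
$$\frac{(u_{\infty,+}'(b))^2}{2} = \frac{1-u_{\infty,+}(a)^2}{2} - (N-1)\int_a^b \frac{(u_{\infty,+}')^2}{r}\,dr.$$

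Passing to the limit in the first identity, the boundary contributions are straightforward: $u_{\mu,+}(b)\to 1$ and $u_{\mu,+}(a)\to u_{\infty,+}(a)<1$, the inequality being strict because otherwise $u_{\infty,+}\equiv 1$ on $[a,b]$ (being monotone increasing with $u_{\infty,+}(b)=1$), contradicting $\mathcal{L}1=1$. Consequently $\mu(u_{\mu,+}(a)-1)\to-\infty$ linearly and $e^{\mu(u_{\mu,+}(a)-1)}/\mu\to 0$. The main obstacle is the convergence of the singular integral
$$\int_a^b \frac{(u_{\mu,+}')^2}{r}\,dr \longrightarrow \int_a^b \frac{(u_{\infty,+}')^2}{r}\,dr,$$
which I would establish by splitting the interval at $b-\delta$: on $[a,b-\delta]$ the $C^\infty$-convergence \eqref{eq:convergence_strong_u_mu} yields convergence of the integral; on $[b-\delta,b]$ the bound $1/r\le(b-\delta)^{-N}r^{N-1}$ combined with the strong $H^1$-convergence of $u_{\mu,+}$ (which, subtracting the $C^\infty$-convergence on $[a,b-\delta]$, forces $\int_{b-\delta}^b(u_{\mu,+}')^2 r^{N-1}\,dr \to \int_{b-\delta}^b(u_{\infty,+}')^2 r^{N-1}\,dr$) produces a tail that vanishes when $\delta\to 0$, since $u_{\infty,+}'$ is bounded on $[a,b]$. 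Inserting these limits into the first identity and comparing with the second yields precisely $\lim_{\mu\to\infty} e^{\mu(u_{\mu,+}(b)-1)}/\mu = (u_{\infty,+}'(b))^2/2$, as claimed.
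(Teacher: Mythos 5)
Your argument is correct, and it follows the same overall strategy as the paper -- write an integral identity for $u_{\mu,+}$, the corresponding identity for $u_{\infty,+}$, and pass to the limit via Theorem \ref{thm:asymptotic_increasing_sol} -- but with a different multiplier. The paper uses the $N$-dimensional Pohozaev (dilation) identity, whose terms are $\int|\nabla u|^2$, $\int u^2$, $\mu^{-1}\int u$ and the boundary trace of $u^2$; all of these converge immediately under the $H^1\cap C^{0,\gamma}$ convergence, so no further work is needed. You instead multiply the radial equation by $u'$ (the one-dimensional energy identity, the same first integral behind Lemma \ref{lemma:a_priori_bounds}), which produces the weighted term $(N-1)\int_a^b (u')^2 r^{-1}\,dr$; this is why you need the extra splitting at $b-\delta$, combining the $C^\infty$-convergence \eqref{eq:convergence_strong_u_mu} on $[a,b-\delta]$ (which also takes care of the $1/r$ factor at the origin when $a=0$, since $u_{\mu,+}'$ and $u_{\infty,+}'$ vanish linearly there) with the strong $H^1$-convergence near $b$, and your treatment of this term is sound. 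Two small remarks: the vanishing of $e^{\mu(u_{\mu,+}(a)-1)}/\mu$ is immediate from \eqref{eq:u_mu<1>1}, since $u_{\mu,+}(a)<1$ gives $e^{\mu(u_{\mu,+}(a)-1)}\leq 1$ for every $\mu$, so the strict inequality $u_{\infty,+}(a)<1$ is not needed; and the boundary values $u_{\mu,+}(b)\to u_{\infty,+}(b)=1$, $u_{\mu,+}(a)\to u_{\infty,+}(a)$ that you use are exactly those provided by the preliminary lemma and the uniform convergence, so your hypotheses are all available. In short: a correct proof, marginally more delicate in the limit passage than the paper's, but requiring only the one-dimensional identity.
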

\begin{proof}
The Pohozaev identity for $u_{\mu,+}$ provides
\begin{multline}
|\partial B_b| \frac{e^{\mu(u_{\mu,+}(b)-1)}}{\mu} = -\frac{N-2}{2} \int_{B_b\setminus B_a} |\nabla u_{\mu,+}|^2\,dx -\frac{N}{2} \int_{B_b\setminus B_a} u_{\mu,+}^2 \,dx \\
+\frac{N}{\mu} \int_{B_b\setminus B_a} u_{\mu,+} \,dx +\int_{\partial(B_b\setminus B_a)} \frac{u_{\mu,+}^2}{2} \,d\sigma,
\end{multline}
while the Pohozaev identity for $u_{\infty,+}$ gives
\begin{multline}
|\partial B_b| \frac{u_{\infty,+}'(b)^2}{2} = -\frac{N-2}{2} \int_{B_b\setminus B_a} |\nabla u_{\infty,+}|^2\,dx -\frac{N}{2} \int_{B_b\setminus B_a} u_{\infty,+}^2 \,dx \\
+\int_{\partial(B_b\setminus B_a)} \frac{u_{\infty,+}^2}{2} \,d\sigma.
\end{multline}
The convergence proved in Theorem \ref{thm:asymptotic_increasing_sol} provides the assertion.
\end{proof}

\subsection{Non-degeneracy of the increasing solution}\label{subsect:nondegeneracy}

\begin{theorem}\label{thm:non_degeneracy}
Let $v_\mu$ solve
\begin{equation}\label{eq:v_mu}
\begin{cases}
-\Delta v_\mu+v_\mu=\mu e^{\mu(u_{\mu,+}-1)} v_\mu \quad &\text{in } B_b\setminus B_a \\
\partial_\nu v_\mu=0 \quad &\text{on } \partial (B_b\setminus B_a).
\end{cases}
\end{equation}
For $\mu$ sufficiently large we have $v_\mu\equiv0$.
\end{theorem}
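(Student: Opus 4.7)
My plan is to argue by contradiction along the lines of \cite[Theorem 5.1]{BonheureGrossiNorisTerracini2015}. I would assume that there exist $\mu_n\to +\infty$ and radial $v_n\not\equiv 0$ solving \eqref{eq:v_mu} at $\mu=\mu_n$, normalized so that $\|v_n\|_{L^\infty(B_b\setminus B_a)}=1$. The goal is to prove that $v_n\to 0$ uniformly, contradicting the normalization. Since $u_{\infty,+}(r)<1$ for $r<b$, the smooth convergence \eqref{eq:convergence_strong_u_mu} implies that the potential $\mu_n e^{\mu_n(u_{\mu_n,+}-1)}$ decays exponentially fast, uniformly on $\overline{B_{b-\delta}\setminus B_a}$ for every $\delta>0$. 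Standard elliptic regularity would then produce a subsequential limit $v_\infty\in C^2_{loc}([a,b))$ with $|v_\infty|\leq 1$, radial, solving $-\Delta v_\infty+v_\infty=0$ in $B_b\setminus B_a$ together with $\partial_\nu v_\infty(a)=0$ when $a>0$.

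Next I would zoom in on the boundary layer at $r=b$, where the concentration takes place. Let $\varepsilon_n:=1/\mu_n$, $s:=(b-r)/\varepsilon_n$, and set
\[
V_n(s):=v_n(b-\varepsilon_n s), \qquad U_n(s):=\mu_n\bigl(u_{\mu_n,+}(b-\varepsilon_n s)-u_{\mu_n,+}(b)\bigr).
\]
Writing $\alpha:=u_{\infty,+}'(b;a,b)>0$, Theorem \ref{thm:asymptotic_increasing_sol} and Lemma \ref{lemma:pohozaev} give $U_n\to U_\infty$ in $C^1_{loc}([0,+\infty))$, where $U_\infty$ is the explicit 1D Liouville profile
\[
-U_\infty''=\tfrac{\alpha^2}{2}e^{U_\infty},\quad U_\infty(0)=0,\quad U_\infty'(0)=-\alpha,
\]
and $e^{\mu_n(u_{\mu_n,+}(b)-1)}/\mu_n\to \alpha^2/2$. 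Rescaling \eqref{eq:v_mu} yields
\[
-V_n''-\tfrac{(N-1)\varepsilon_n}{b-\varepsilon_n s}V_n'+\varepsilon_n^2 V_n=\tfrac{e^{\mu_n(u_{\mu_n,+}(b)-1)}}{\mu_n}e^{U_n(s)}V_n,\quad V_n'(0)=0,
\]
with $|V_n|\leq 1$, so that along a subsequence $V_n\to V_\infty$ in $C^1_{loc}([0,+\infty))$, where $V_\infty$ is a bounded solution of the linearized Liouville equation $-V_\infty''=\tfrac{\alpha^2}{2}e^{U_\infty}V_\infty$ on $(0,+\infty)$ with $V_\infty'(0)=0$.

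The heart of the argument is the rigidity of this limit system. Since $U_\infty$ is (a translate of) the explicit profile $\log\!\bigl(c^2/(2\cosh^2(c(s-s_0)/2))\bigr)$, the space of bounded solutions on $\R$ of the linearized equation is two-dimensional, spanned by the generators of scaling and translation of $U_\infty$. The Neumann condition $V_\infty'(0)=0$ cuts this down to a one-parameter family, and matching with the interior at infinity, $V_\infty(+\infty)=v_\infty(b)$, combined with the equation and boundary condition satisfied by $v_\infty$ on $[a,b)$, should force $v_\infty\equiv 0$ and $V_\infty\equiv 0$. Together with the interior vanishing of step one, this would give $\|v_n\|_{L^\infty}\to 0$, the desired contradiction.

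The main obstacle will be precisely the rigidity step: explicitly identifying the bounded kernel of the linearized 1D Liouville operator with Neumann data at $s=0$, and then rigorously carrying out the matching with the linear equation satisfied by $v_\infty$ through the boundary layer (here the exponential behavior replaces the polynomial $p\to\infty$ analysis of \cite[Theorem 5.1]{BonheureGrossiNorisTerracini2015}). A secondary technical point will be the uniform-in-$\mu_n$ control of $V_n$ up to second derivatives in a neighborhood of $s=0$ that is sufficient to pass to the limit in the rescaled equation; this should follow from $\|v_n\|_\infty\leq 1$ and the explicit form of the rescaled coefficients.
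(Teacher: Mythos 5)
Your blow-up setup (interior limit plus boundary-layer rescaling at $r=b$, with the limit layer profile identified through Lemma \ref{lemma:pohozaev} and Theorem \ref{thm:asymptotic_increasing_sol}) matches the first two lemmas of the paper's proof, but the way you propose to close the argument has a genuine gap. Knowing that $v_n\to v_\infty$ in $C^1_{loc}$ of compact subsets of $[a,b)$ and that the rescaled functions $V_n\to V_\infty$ in $C^1_{loc}$ does \emph{not} yield $\|v_n\|_{L^\infty}\to 0$, nor does it justify the ``matching'' $V_\infty(+\infty)=v_\infty(b)$: the maximum point $r_n$ of $|v_n|$ may sit in the intermediate regime $b-r_n\to 0$ with $\mu_n(b-r_n)\to\infty$, which neither local limit controls. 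This is exactly where the paper departs from your plan: it derives the global representation formula \eqref{eq:contradiction_step3}, valid uniformly in $r$, expressing $v_\mu/\|v_\mu\|_{L^\infty}$ as $G(\cdot,b;a,b)$ times the layer integral $k\mu\int e^{\tilde u_\mu}\tilde v_\mu\,dt$ up to $o_\mu(1)$; the quantitative lower bound $\tilde u_\mu'\geq C$ of Lemma \ref{lemma:non_deg_lebesgue} is what tames the intermediate region (it gives an integrable exponential dominating $e^{\tilde u_\mu}$ away from the layer, so the tail of the Green representation is negligible). The contradiction is then obtained by evaluating \eqref{eq:contradiction_step3} at the maximum point (left side equal to $1$) and at $r=b$ (left side $o_\mu(1)$ by Lemma \ref{lemma:linearized_rescaling}). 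Without some substitute for this uniform-in-$r$ estimate, your final step ``together with the interior vanishing\ldots this would give $\|v_n\|_{L^\infty}\to 0$'' does not follow.

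Two smaller points. First, your description of the limit kernel is off: the bounded solutions on all of $\R$ of $-w''=\tfrac{\alpha^2}{2}e^{U_\infty}w$ form a one-dimensional space spanned by the translation mode $U_\infty'$; the scaling generator grows linearly and is not admissible. The rigidity actually needed is on the half-line: a bounded solution with Neumann condition at the endpoint vanishes identically, since $U_\infty'$ does not satisfy that Neumann condition; this is precisely \cite[Lemma 4.2]{Grossi2006}, which the paper invokes in Lemma \ref{lemma:linearized_rescaling}, so that step is fine once cited correctly, and in particular no matching with the interior is required to get $V_\infty\equiv 0$. Second, note that in the paper's logic the interior limit of $v_\mu/\|v_\mu\|_{L^\infty}$ need not be shown to vanish directly; it is automatically a multiple of $G(\cdot,b;a,b)$ through \eqref{eq:contradiction_step3}, and the contradiction comes from the two evaluations above rather than from proving $v_\infty\equiv0$ first.
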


The proof of this theorem follows very closely that of \cite[Theorem 5.1]{BonheureGrossiNorisTerracini2015}, therefore we only highlight the main differences.

\begin{lemma}\label{lemma:u_tilde_mu_convergence}
Let
\begin{equation}
\tilde{u}_\mu(r):=\mu \left[ u_{\mu,+}\left( b+\frac{r}{k\mu} \right)-u_{\mu,+}(b) \right], 
\qquad r\in [-(b-a)k\mu,0],
\end{equation}
with
\begin{equation}\label{eq:k_def}
k:=\frac{u_{\infty,+}'(b;a,b)}{\sqrt{2}}.
\end{equation}
Then
\begin{equation}\label{eq:u_tilde_convergence}
\tilde{u}_\mu(r)\to \tilde{u}_\infty(r):=\log\frac{4e^{\sqrt{2}r}}{(1+e^{\sqrt{2}r})^2} \quad \text{in } C^1_{loc}(-\infty,0).
 \end{equation}
\end{lemma}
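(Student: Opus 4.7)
The plan is to derive the ODE satisfied by $\tilde u_\mu$, show that its coefficients converge suitably, extract a $C^1_{loc}$-convergent subsequence, identify the limit via a Liouville Cauchy problem, and check by direct substitution that the explicit function in the statement solves that problem.

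First I substitute $s=b+r/(k\mu)$ in the radial form of \eqref{eq:u_mu_annulus}; using $u_{\mu,+}'(s)=k\,\tilde u_\mu'(r)$, $u_{\mu,+}''(s)=k^2\mu\,\tilde u_\mu''(r)$ and the identity $\mu(u_{\mu,+}(s)-1)=\mu(u_{\mu,+}(b)-1)+\tilde u_\mu(r)$, a routine computation yields
\begin{equation*}
\tilde u_\mu''(r)=-\frac{N-1}{s\,k\mu}\,\tilde u_\mu'(r)+\frac{u_{\mu,+}(s)}{k^2\mu}-\frac{e^{\mu(u_{\mu,+}(b)-1)}}{k^2\mu}\,e^{\tilde u_\mu(r)}.
\end{equation*}
The Neumann condition $u_{\mu,+}'(b)=0$ supplies the initial data $\tilde u_\mu(0)=\tilde u_\mu'(0)=0$. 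Since $u_{\mu,+}$ is increasing on $[a,b]$ and Lemma~\ref{lemma:a_priori_bounds} gives $|u_{\mu,+}'|\leq 1$, I obtain $\tilde u_\mu\leq 0$ and $0\leq \tilde u_\mu'\leq 1/k$ on the whole domain of $\tilde u_\mu$, whence $|\tilde u_\mu(r)|\leq |r|/k$ and $0<e^{\tilde u_\mu(r)}\leq 1$ on any compact subset of $(-\infty,0]$.

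Invoking Lemma~\ref{lemma:pohozaev} to get $e^{\mu(u_{\mu,+}(b)-1)}/\mu\to k^2$, the displayed ODE then produces a uniform $C^2_{loc}$-bound on $\tilde u_\mu$. By Ascoli--Arzel\`a a subsequence converges in $C^1_{loc}(-\infty,0]$ to some $\tilde u_\infty\in C^1(-\infty,0]$, and termwise passage to the limit in the ODE (the first two terms vanish because $s\to b$ and $u_{\mu,+}$ is uniformly bounded, and the third tends to $e^{\tilde u_\infty}$) yields
\begin{equation*}
\tilde u_\infty''=-e^{\tilde u_\infty},\quad \tilde u_\infty(0)=0,\quad \tilde u_\infty'(0)=0.
\end{equation*}
A direct substitution shows that $\log\dfrac{4e^{\sqrt{2}r}}{(1+e^{\sqrt{2}r})^2}$ satisfies this Cauchy problem; in particular $\tilde u_\infty(r)\sim\sqrt{2}\,r$ as $r\to-\infty$, so the solution extends to all of $(-\infty,0]$. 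Uniqueness for this smooth Cauchy problem promotes the subsequential convergence to convergence of the whole family.

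In my view the main obstacle is the calibration of the exponential term: the Liouville limit emerges only because the scaling $1/(k\mu)$ inside $\tilde u_\mu$, together with the constant $k=u_{\infty,+}'(b;a,b)/\sqrt{2}$, is chosen so that $e^{\mu(u_{\mu,+}(b)-1)}/(k^2\mu)\to 1$ thanks to Lemma~\ref{lemma:pohozaev}; any other choice of $k$ would yield a trivial or divergent limit. Once this balance is in hand, the remaining arguments are standard ODE compactness, and global existence on $(-\infty,0]$ is guaranteed by the universal linear bound $|\tilde u_\mu(r)|\leq |r|/k$ inherited from the $C^1$ estimate.
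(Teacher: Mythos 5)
Your proof is correct and follows essentially the same route as the paper: rescale to get the ODE for $\tilde u_\mu$, use the monotonicity and the uniform $C^1$ bounds for the local estimates, use Lemma \ref{lemma:pohozaev} to see that the coefficient $e^{\mu(u_{\mu,+}(b)-1)}/(k^2\mu)\to1$, and pass to the limit to arrive at the Liouville Cauchy problem $-u''=e^{u}$, $u(0)=u'(0)=0$. The only (harmless) difference is at the identification step, where the paper invokes the classification result of Grossi \cite[Lemma 4.2]{Grossi2006}, while you verify the explicit solution by direct substitution and conclude by uniqueness for the Cauchy problem, which is equally valid and slightly more self-contained.
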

\begin{proof}
Notice first that for every $R>0$ there exists $C>0$ independent of $\mu$ such that
\begin{equation}
|\tilde{u}_\mu(r)|+|\tilde{u}_\mu'(r)|\leq C, \qquad r\in (-R,0).
\end{equation}
Moreover, $\tilde{u}_\mu$ solves the following equation
\begin{equation}
\begin{cases}
-\tilde{u}_\mu''-\frac{N-1}{b+\frac{r}{k\mu}} \frac{\tilde{u}_\mu'}{k\mu} +\frac{\tilde{u}_\mu}{k^2\mu^2}+\frac{u_{\mu,+}(b)}{k^2\mu} = \frac{e^{\mu(u_{\mu,+}(b)-1)}}{k^2\mu}e^{\tilde{u}_\mu}, \quad r\in [-(b-a)k\mu,0] \\
\tilde{u}_\mu(0)=\tilde{u}_\mu'(0)=0.
\end{cases}
\end{equation}
The right hand side of the previous equation is bounded because, thanks to Lemma \ref{lemma:pohozaev}, we have
\begin{equation}\label{eq:pohozaev_consequence}
\frac{e^{\mu(u_{\mu,+}(b)-1)}}{k^2\mu} \to1.
\end{equation}
We conclude that also $\tilde{u}_\mu''$ is bounded in $(-R,R)$, hence $\tilde{u}_\mu$ converges to some function $u$ in $C^1_{loc}(-\infty,0)$, which satisfies
\begin{equation}
\begin{cases}
-u''=e^u \quad r\in [-\infty,0]\\
u(0)= u'(0)=0.
\end{cases}
\end{equation}
Then it is proved in \cite[Lemma 4.2]{Grossi2006} that $u$ has the form in \eqref{eq:u_tilde_convergence}.
\end{proof}

\begin{lemma}\label{lemma:linearized_rescaling}
Let $v_\mu$ be a nontrivial solution of \eqref{eq:v_mu} and let
\begin{equation}
\tilde{v}_\mu(r):= \frac{v_{\mu}\left( b+\frac{r}{k\mu} \right)}{\|v_\mu\|_{L^\infty(B_b\setminus B_a)}}, 
\qquad r\in [-(b-a)k\mu,0].
\end{equation}
Then $\tilde{v}_\mu\to0$ in $C^1_{loc}(-\infty,0)$.
\end{lemma}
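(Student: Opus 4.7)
The plan is to show that the rescaled family $\tilde v_\mu$ is pre-compact in $C^1_{loc}((-\infty,0])$, to extract a subsequential limit $v$ solving a linearized Liouville equation on the half line, and then to argue by an ODE dichotomy that the only bounded solution of this limit problem satisfying the Neumann condition at $0$ is the trivial one.

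First I would perform the change of variable $s=k\mu(r-b)$ in the radial form of \eqref{eq:v_mu}. Using the identity $e^{\mu(u_{\mu,+}(b+s/(k\mu))-1)}=e^{\mu(u_{\mu,+}(b)-1)}\,e^{\tilde u_\mu(s)}$, the rescaled equation reads
$$-\tilde v_\mu''-\frac{N-1}{k\mu b+s}\,\tilde v_\mu'+\frac{\tilde v_\mu}{k^2\mu^2}=\frac{e^{\mu(u_{\mu,+}(b)-1)}}{k^2\mu}\,e^{\tilde u_\mu(s)}\,\tilde v_\mu$$
on $[-(b-a)k\mu,0]$. The Neumann condition $v_\mu'(b)=0$ gives $\tilde v_\mu'(0)=0$, and the normalization yields $|\tilde v_\mu|\leq 1$.

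Next, for any fixed $R>0$, on $[-R,0]$ the coefficient on the right-hand side is bounded uniformly in $\mu$ by \eqref{eq:pohozaev_consequence} and Lemma \ref{lemma:u_tilde_mu_convergence}, while the first-order coefficient is $O(1/\mu)$. Combining $|\tilde v_\mu|\leq 1$, $\tilde v_\mu'(0)=0$, and a Gronwall-type integration from $0$, I would extract uniform bounds on $\tilde v_\mu'$, and then, via the equation, on $\tilde v_\mu''$ over $[-R,0]$. By Arzel\`a--Ascoli and a diagonal argument, along a subsequence $\tilde v_\mu\to v$ in $C^1_{loc}((-\infty,0])$, where $v$ satisfies
$$-v''=e^{\tilde u_\infty(s)}\,v\quad\text{on }(-\infty,0],\qquad v'(0)=0,\qquad \|v\|_{L^\infty((-\infty,0])}\leq 1.$$

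The main obstacle I expect is showing that the only such $v$ is $v\equiv 0$. Differentiating $-\tilde u_\infty''=e^{\tilde u_\infty}$ shows that $\psi:=\tilde u_\infty'$ solves the same linear equation. From the explicit form \eqref{eq:u_tilde_convergence} one computes $\psi(0)=0$, $\psi'(0)=\tilde u_\infty''(0)=-1$, and $\psi(s)\to\sqrt 2$ as $s\to-\infty$. Reduction of order produces a second, linearly independent solution $\phi(s)=\psi(s)\int_{s_0}^s\psi(t)^{-2}\,dt$ for any $s_0<0$ with $\psi>0$ on $[s_0,0)$; since $\psi\to\sqrt 2$ at $-\infty$, the integral grows linearly and $\phi(s)\sim s/\sqrt 2$ is unbounded. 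Thus the bounded solutions of the limit equation on $(-\infty,0]$ form the one-dimensional subspace $\R\,\psi$. Writing $v=\alpha\psi$, the Neumann condition forces $0=v'(0)=-\alpha$, so $v\equiv 0$. Since every subsequence of $(\tilde v_\mu)$ admits a sub-subsequence converging to $0$ in $C^1_{loc}$, the full sequence converges to $0$, establishing the lemma.
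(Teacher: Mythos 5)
Your argument is correct, and its skeleton is the same as the paper's: rescale the linearized equation by $s=k\mu(r-b)$, get uniform $C^1$ (hence $C^2$) bounds on compact subsets of $(-\infty,0]$, pass to a limit $v$ solving $-v''=e^{\tilde u_\infty}v$ with $v'(0)=0$ and $\|v\|_{L^\infty}\le1$, and conclude $v\equiv0$. The difference is in how the last step is settled: the paper simply invokes the classification in \cite[Lemma 4.2]{Grossi2006}, whereas you prove it directly by observing that $\psi=\tilde u_\infty'$ solves the linearized equation with $\psi(0)=0$, $\psi'(0)=-1$, $\psi\to\sqrt2$ at $-\infty$, and that the reduction-of-order companion $\phi=\psi\int_{s_0}^{\cdot}\psi^{-2}$ grows linearly at $-\infty$, so bounded solutions are exactly $\R\psi$ and the Neumann condition kills them. (Tiny caveat: since $\psi(0)=0$, it is cleanest to perform the decomposition $v=\alpha\psi+\beta\phi$ on $(-\infty,s_0]$ for some fixed $s_0<0$ and then extend by uniqueness, rather than worrying about $\phi$ at $s=0$; your conclusion is unaffected.) Your route buys a self-contained proof of the nondegeneracy of the one-dimensional limit problem and also supplies the Gronwall derivation of the uniform bounds $|\tilde v_\mu|+|\tilde v_\mu'|\le C$ and the $C^1_{loc}((-\infty,0])$ convergence needed to pass $\tilde v_\mu'(0)=0$ to the limit, both of which the paper asserts without detail; the paper's route is shorter by outsourcing the ODE classification to the cited lemma.
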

\begin{proof}
Notice first that there exists $C>0$ independent of $\mu$ such that
\begin{equation}\label{eq:v_tilde_bounds}
|\tilde{v}_\mu(r)|+|\tilde{v}_\mu'(r)|\leq C, \qquad r\in \left(-\frac{(b-a)k\mu}{2},0\right).
\end{equation}
Moreover, $\tilde{v}_\mu$ solves the following equation
\begin{equation}
\begin{cases}
-\tilde{v}_\mu''-\frac{N-1}{b+\frac{r}{k\mu}} \frac{\tilde{v}_\mu'}{k\mu} +\frac{\tilde{v}_\mu}{k^2\mu^2} 
= \frac{e^{\mu(u_{\mu,+}(b)-1)}}{k^2\mu}e^{\tilde{u}_\mu} \tilde{v}_\mu, \quad r\in (-(b-a)k\mu,0) \\
\tilde{v}'_\mu(0)=0 \\
\|\tilde{v}_\mu\|_{L^\infty}=1.
\end{cases}
\end{equation}
From \eqref{eq:u_tilde_convergence}, \eqref{eq:pohozaev_consequence} and \eqref{eq:v_tilde_bounds} we deduce that $\tilde{v}_\mu\to \tilde{v}_\infty$ in $C^1_{loc}(-\infty,0)$, with
\begin{equation}
\begin{cases}
-\tilde{v}_\infty'' =\frac{4e^{\sqrt{2}r}}{(1+e^{\sqrt{2}r})^2} \tilde{v}_\infty, \quad r\in (-\infty,0) \\
\tilde{v}'_\infty(0)=0 \\
\|\tilde{v}_\infty\|_{L^\infty}\leq1.
\end{cases}
\end{equation}
Then \cite[Lemma 4.2]{Grossi2006} implies that $\tilde{v}_\infty\equiv0$.
\end{proof}

\begin{lemma}\label{lemma:non_deg_lebesgue}
There exist $\bar{\mu}>1$, $\bar{s}\in (-(b-a)k\bar{\mu}/2,0)$ and $C>0$ such that
\begin{equation}
\tilde{u}_\mu'(s) \geq C
\end{equation}
for every $\mu\geq\bar{\mu}$, $s \in (-(b-a)k\mu/2,\bar{s})$.
\end{lemma}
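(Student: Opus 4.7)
The plan is to reformulate the goal via the scaling $r=b+s/(k\mu)$: since $\tilde{u}_\mu'(s)=u_{\mu,+}'(r)/k$, the lemma is equivalent to showing $u_{\mu,+}'(r)\geq Ck$ for $r\in[(a+b)/2,\,b+\bar{s}/(k\mu)]$. I would then split this range at a scale $b-\varepsilon$, for a small fixed $\varepsilon>0$, into a \emph{far} region and a \emph{near} region and use different tools for each.

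\emph{Far region} $r\in[(a+b)/2,b-\varepsilon]$: here I use the $C^\infty$ convergence \eqref{eq:convergence_strong_u_mu} of $u_{\mu,+}$ to $u_{\infty,+}=G(\cdot,b;a,b)$. Since $u_{\infty,+}$ is monotone non-decreasing and satisfies $\mathcal{L}u_{\infty,+}=0$ on $(a,b)$, at any interior critical point $r_0$ one would have $u_{\infty,+}''(r_0)=u_{\infty,+}(r_0)>0$, making $r_0$ a strict local minimum, contradicting monotonicity. Hence $u_{\infty,+}'>0$ in the interior, and by compactness $m_\varepsilon:=\min_{[(a+b)/2,b-\varepsilon]}u_{\infty,+}'>0$. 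The $C^\infty$ convergence then yields $\tilde{u}_\mu'(s)\geq m_\varepsilon/(2k)$ in the far region for $\mu$ large.

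\emph{Near region} $r\in[b-\varepsilon,b+\bar{s}/(k\mu)]$, i.e.\ $s\in[-\varepsilon k\mu,\bar{s}]$: here the rescaled interval grows with $\mu$, so $C^1_{loc}$-convergence of $\tilde{u}_\mu$ is insufficient. I instead employ the Lyapunov functional $L(r)=(u_{\mu,+}')^2/2-u_{\mu,+}^2/2+e^{\mu(u_{\mu,+}-1)}/\mu$, shown non-increasing in the proof of Lemma \ref{lemma:a_priori_bounds}. Using $u_{\mu,+}'(b)=0$ together with the a priori bound $|u_{\mu,+}'|\leq 1$ from the same lemma,
\[
0\leq L(r)-L(b)=\int_r^b\frac{N-1}{\rho}(u_{\mu,+}')^2\,d\rho\leq \frac{(N-1)\varepsilon}{b-\varepsilon}.
\]
Solving for $(u_{\mu,+}')^2$, dividing by $k^2$, using $u_{\mu,+}(b)\to1$, $e^{\mu(u_{\mu,+}(b)-1)}/\mu\to k^2$ (Lemma \ref{lemma:pohozaev}), and the identity $e^{\mu(u_{\mu,+}(r)-1)}=e^{\mu(u_{\mu,+}(b)-1)}e^{\tilde{u}_\mu(s)}$, and noting that $|u_{\mu,+}(r)-u_{\mu,+}(b)|\leq u_{\mu,+}(b)-u_{\mu,+}(b-\varepsilon)=O(\varepsilon)+o(1)$ so that the term $(u_{\mu,+}(r)^2-u_{\mu,+}(b)^2)/k^2$ is also $O(\varepsilon)+o(1)$, I obtain uniformly in $s$
\[
(\tilde{u}_\mu'(s))^2\geq 2(1-e^{\tilde{u}_\mu(s)})+O(\varepsilon)+o(1),
\]
where $o(1)\to 0$ as $\mu\to\infty$.

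To conclude, I fix $\bar{s}<0$ with $\tilde{u}_\infty(\bar{s})\leq -\log 4$, which is possible since $\tilde{u}_\infty(s)\to -\infty$ as $s\to-\infty$. By the $C^0_{loc}$-convergence of Lemma \ref{lemma:u_tilde_mu_convergence}, $\tilde{u}_\mu(\bar{s})\leq -\log 3$ for $\mu$ large, and since $\tilde{u}_\mu$ is non-decreasing (being a rescaling of the increasing function $u_{\mu,+}$), $e^{\tilde{u}_\mu(s)}\leq 1/3$ for all $s\leq \bar{s}$. The near-region estimate then gives $(\tilde{u}_\mu'(s))^2\geq 4/3+O(\varepsilon)+o(1)\geq 1$ for $\varepsilon$ small and $\mu$ large, hence $\tilde{u}_\mu'(s)\geq 1$. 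Setting $C:=\min\{m_\varepsilon/(2k),\,1\}$ and $\bar{\mu}$ large enough that $-(b-a)k\bar{\mu}/2<\bar{s}$ completes the proof. The main obstacle is obtaining a uniform lower bound on the growing near-region interval where pointwise $C^1_{loc}$-convergence is unavailable; the key is that the $C^1$ a priori bound $|u_{\mu,+}'|\leq 1$ makes $L(r)-L(b)$ of order $\varepsilon$ (independently of $\mu$), so that the identity transfers faithfully to the blow-up energy relation $(\tilde{u}_\infty')^2=2(1-e^{\tilde{u}_\infty})$.
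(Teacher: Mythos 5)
Your argument is correct, but it follows a genuinely different route from the paper's. The paper proves the lemma with a single global identity: it integrates $-(r^{N-1}u_{\mu,+}')'+r^{N-1}u_{\mu,+}=e^{\mu(u_{\mu,+}-1)}r^{N-1}$ from $a$ to $b+s/(k\mu)$, bounds the resulting ``mass'' term from below by a fixed fraction of $\int_a^{(a+b)/2}G(t,b;a,b)\,dt$ using Theorem \ref{thm:asymptotic_increasing_sol}, and makes the nonlinear term uniformly small for $s\leq s(\delta)$ via the blow-up change of variables, \eqref{eq:pohozaev_consequence}, dominated convergence and the integrability of $e^{\tilde u_\infty}$ on $(-\infty,0)$; this covers the whole range $(-(b-a)k\mu/2,\bar s)$ in one stroke. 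You instead split the range: away from the layer you use the $C^\infty$ convergence \eqref{eq:convergence_strong_u_mu} together with strict positivity of $u_{\infty,+}'=\partial_r G(\cdot,b;a,b)$ in the interior (your critical-point argument is fine, granted the standard strict positivity of $G$), while inside the layer you exploit the monotone energy $L(r)$ from Lemma \ref{lemma:a_priori_bounds}, the bound $|u_{\mu,+}'|\leq1$, the Pohozaev limit of Lemma \ref{lemma:pohozaev}, and the monotonicity of $\tilde u_\mu$ plus pointwise convergence at the single point $\bar s$ (Lemma \ref{lemma:u_tilde_mu_convergence}); this transfers the first integral $(\tilde u_\infty')^2=2(1-e^{\tilde u_\infty})$ to $\tilde u_\mu$ up to $O(\varepsilon)+o(1)$, which is exactly what is needed where $C^1_{loc}$ convergence alone fails. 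What each buys: the paper's proof is shorter and needs only the $L^1$-type information on the limit profile, with a constant $C$ of the order of the Green-function mass; yours is slightly longer and needs the a priori derivative bound and the uniform smallness of the $u^2$-term, but it produces a nearly optimal quantitative slope in the layer region ($\tilde u_\mu'$ close to $\sqrt2$ for $s\leq\bar s$) and localizes clearly which estimate is responsible for which regime. Only bookkeeping remarks: the order of choices should be $\varepsilon$ small (depending on $k$), then $\bar s$, then $\bar\mu$ large enough that the two regions overlap ($-\varepsilon k\bar\mu\leq\bar s$) and all the $o(1)$'s are small -- you state this implicitly and it causes no circularity.
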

\begin{proof}
We integrate the equation $-(r^{N-1}u_{\mu,+}')'+r^{N-1}u_{\mu,+} = e^{\mu(u_{\mu,+}-1)}r^{N-1}$ between $a$ and $b+s/(k\mu)$, with
\[
s \in \left( -\frac{(b-a)k\mu}{2},0 \right),
\]
to obtain
\begin{multline}\label{eq:l12mu}
\left(b+\frac{s}{k\mu}\right)^{N-1} k \tilde{u}_{\mu}'(s) 
= \left(b+\frac{s}{k\mu}\right)^{N-1} u_{\mu,+}'\left(b+\frac{s}{k\mu}\right) \\
=\int_a^{b+\frac{s}{k\mu}} u_{\mu,+} (t)\,dt -\int_a^{b+\frac{s}{k\mu}} e^{\mu(u_{\mu,+}-1)}t^{N-1} \,dt 
=:I_{1,\mu}(s)-I_{2,\mu}(s).
\end{multline}
On the one hand, Theorem \ref{thm:asymptotic_increasing_sol} implies that
\begin{equation}\label{eq:I1mu}
I_{1,\mu}(s) \geq \frac{1}{2} \int_a^{\frac{a+b}{2}} G(t,b;a,b) \,dt \geq C,
\end{equation}
for a constant $C>0$ not depending on $\mu$ and $s$.
On the other hand, by the change of variables $\tau=k\mu(t-b)$, we have
\begin{multline*}
I_{2,\mu}(s) = \int_{-(b-a)k\mu}^s e^{\tilde{u}_\mu(\tau)} \frac{e^{\mu(u_{\mu,+}(b)-1)}}{k\mu} \left(b+\frac{\tau}{k\mu}\right)^{N-1} \,d\tau \\
\to k b^{N-1} \int_{-\infty}^{s} e^{\tilde{u}_\infty(\tau)} \,d\tau \qquad \text{as } \mu\to+\infty,
\end{multline*}
where in the second line we used \eqref{eq:u_tilde_convergence}, \eqref{eq:pohozaev_consequence} and the Lebesgue dominated convergence theorem. Since $ \int_{-\infty}^{0} e^{\tilde{u}_\infty(\tau)} \,d\tau<\infty$, for every $\delta>0$ there exists $s(\delta)<0$, $\mu(\delta)>0$ such that 
\begin{equation}\label{eq:I2mu}
I_{2,\mu}(s)<\delta \quad \text{for every } \mu>\mu(\delta) \text{ and } s\in (-(b-a)k\mu,s(\delta)).
\end{equation}
By combining \eqref{eq:l12mu}, \eqref{eq:I1mu} and \eqref{eq:I2mu} we obtain the statement.
\end{proof}

\begin{proof}[End of the proof of Theorem \ref{thm:non_degeneracy}]
Using Lemmas \ref{lemma:linearized_rescaling}, \ref{lemma:non_deg_lebesgue} and proceeding exactly as in the proof of \cite[Theorem 5.1]{BonheureGrossiNorisTerracini2015} Step 3, it is possible to show that any nontrivial solution $v_\mu$ of \eqref{eq:v_mu} satisfies
\begin{equation}\label{eq:contradiction_step3}
\frac{v_\mu(r)}{\|v_\mu\|_{L^\infty(B_b\setminus B_a)}}
= G(r,b;a,b) k\mu \int_{-\frac{b-a}{2}k\mu}^0 e^{\tilde{u}_\mu(t)} \tilde{v}_\mu(t) \,dt +o_\mu(1).
\end{equation}
For every $\mu$ let $r_\mu\in [a,b]$ be such that $v_\mu(r_\mu)=\|v_\mu\|_{L^\infty(B_b\setminus B_a)}$. Then \eqref{eq:contradiction_step3} provides
\begin{equation}\label{eq:contradiction_step3_2}
1= G(r_\mu,b;a,b) k\mu \int_{-\frac{b-a}{2}k\mu}^0 e^{\tilde{u}_\mu(t)} \tilde{v}_\mu(t) \,dt +o_\mu(1).
\end{equation}
On the other hand, Lemma \ref{lemma:linearized_rescaling} implies that $v_\mu(b)\to0$ as $\mu\to\infty$, so that \eqref{eq:contradiction_step3} gives
\begin{equation}
o_\mu(1)= G(b,b;a,b) k\mu \int_{-\frac{b-a}{2}k\mu}^0 e^{\tilde{u}_\mu(t)} \tilde{v}_\mu(t) \,dt ,
\end{equation}
which contradicts \eqref{eq:contradiction_step3_2}.
\end{proof}

\subsection{Uniqueness of the increasing solution}\label{subsect:uniqueness_increasing}

Exploiting the proof of Theorem \ref{thm:non_degeneracy}, one can also prove that the mountain pass value $c_{\mu,+}(a,b)$ in \eqref{eq:c_mu_+_def} is uniquely achieved (for more details see \cite[Theorem 5.1 and Corollary 5.3]{BonheureGrossiNorisTerracini2015}).

\begin{theorem}\label{thm:uniqueness_minimal_energy_sol}
There exists $\bar{\mu}(a,b)$ such that, for $\mu>\bar{\mu}(a,b)$, the value $c_{\mu,+}(a,b)$ is uniquely achieved by a multiple of $u_{\mu,+}(\cdot;a,b)$. In addition, one can choose the value of $\bar{\mu}$ valid for an open neighbourhood of $a$ and $b$.
\end{theorem}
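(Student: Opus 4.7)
The plan is to argue by contradiction, reducing uniqueness to the non-degeneracy result of Theorem~\ref{thm:non_degeneracy}. Suppose that along some sequence $\mu_n\to+\infty$ there exist two distinct increasing radial solutions $u_n^{(1)}, u_n^{(2)}$ of \eqref{eq:u_mu_annulus} both achieving $c_{\mu_n,+}(a,b)$. By Theorem~\ref{thm:asymptotic_increasing_sol} and \eqref{eq:convergence_strong_u_mu}, both sequences converge to $u_{\infty,+}(\cdot;a,b)=G(\cdot,b;a,b)$ in $H^1(B_b\setminus B_a)\cap C^{0,\gamma}(\overline{B_b\setminus B_a})$ and in $C^\infty(\overline{B_{b-\varepsilon}\setminus B_a})$ for every $\varepsilon>0$.

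Next I would normalize the difference by setting
\[
w_n:=\frac{u_n^{(1)}-u_n^{(2)}}{\|u_n^{(1)}-u_n^{(2)}\|_{L^\infty(B_b\setminus B_a)}}.
\]
Subtracting the two equations and applying the mean value theorem yields
\[
-\Delta w_n+w_n=\mu_n\,e^{\mu_n(\xi_n-1)}w_n \text{ in } B_b\setminus B_a,\qquad \partial_\nu w_n=0 \text{ on } \partial(B_b\setminus B_a),
\]
where $\xi_n(x)$ lies pointwise between $u_n^{(1)}(x)$ and $u_n^{(2)}(x)$ and $\|w_n\|_{L^\infty}=1$. This is precisely the linearized equation \eqref{eq:v_mu} with $u_{\mu,+}$ replaced by $\xi_n$. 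Since $\xi_n$ is sandwiched between two functions both satisfying Lemma~\ref{lemma:u_tilde_mu_convergence}, the rescaling $\tilde\xi_n(r):=\mu_n[\xi_n(b+r/(k\mu_n))-\xi_n(b)]$ converges to $\tilde u_\infty$ in $C^1_{loc}(-\infty,0)$. Consequently the proofs of Lemmas~\ref{lemma:linearized_rescaling}-\ref{lemma:non_deg_lebesgue} apply verbatim to $w_n$, and the representation argument of Theorem~\ref{thm:non_degeneracy} produces
\[
w_n(r)=G(r,b;a,b)\,k\mu_n\int_{-\frac{b-a}{2}k\mu_n}^{0}e^{\tilde\xi_n(t)}\tilde w_n(t)\,dt+o_n(1).
\]
Evaluating at a point $r_n$ where $|w_n|$ attains its supremum gives a finite nonzero value on the left, while evaluating the same identity at $r=b$ (where $w_n(b)\to0$ by the rescaling) forces the integral to vanish; comparing the two identities via the monotonicity \eqref{eq:green_increasing} of $G(\cdot,b;a,b)$ gives a contradiction.

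The main obstacle is controlling the sandwiched function $\xi_n$ near $r=b$. Although $\xi_n$ is defined only pointwise, the Pohozaev identity of Lemma~\ref{lemma:pohozaev} applied to each $u_n^{(i)}$ yields $e^{\mu_n(u_n^{(i)}(b)-1)}/\mu_n\to k^2$, which after taking logarithms forces $\mu_n|u_n^{(1)}(b)-u_n^{(2)}(b)|\to 0$. Together with the $C^\infty$-convergence of both profiles away from $r=b$, this ensures that $\tilde\xi_n$ inherits the limit $\tilde u_\infty$ and that the rescaled potential $e^{\mu_n(\xi_n(b)-1)}/(k^2\mu_n)$ tends to $1$, which is exactly the input needed to mimic the proof of Theorem~\ref{thm:non_degeneracy}.

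Finally, for the uniform-in-$(a,b)$ statement, the asymptotic constants entering the rescaling—namely $k(a,b)$ from \eqref{eq:k_def}, the values of $G(\cdot,b;a,b)$, and the convergence rates in Lemmas~\ref{lemma:u_tilde_mu_convergence}-\ref{lemma:non_deg_lebesgue}—depend continuously on $(a,b)$ throughout the admissible range (cf.\ Remark~\ref{rem:existence_sol}), so the threshold $\bar\mu$ can be chosen uniformly on a small open neighbourhood of any fixed pair $(a,b)$ for which $u_{\mu,+}(\cdot;a,b)$ exists.
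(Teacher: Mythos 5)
Your argument is correct and is essentially the proof the paper intends: the paper itself only says that uniqueness follows by ``exploiting the proof of Theorem~\ref{thm:non_degeneracy}'' (deferring to the cited reference), and that argument is exactly yours --- two distinct achievers of $c_{\mu,+}(a,b)$ would produce a normalized difference solving the linearized equation with an intermediate potential sandwiched between the two convergent profiles, so the blow-up and Green-representation analysis of the non-degeneracy proof gives a contradiction, and the uniform threshold in $(a,b)$ is obtained by the same continuity/compactness considerations. Only cosmetic remarks: what is really needed is uniform convergence on compact sets of the sandwiched rescaled potential $e^{\mu_n(\xi_n(b+r/(k\mu_n))-1)}/(k^2\mu_n)$ (which follows from the squeeze between the two solution potentials), not $C^1_{loc}$ convergence of $\tilde\xi_n$ itself, and the final contradiction uses the boundedness of $G(\cdot,b;a,b)$ together with $G(b,b;a,b)>0$ rather than its monotonicity.
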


\subsection{Regular dependence on the boundary points}\label{subsec:regular_dependence}

\begin{lemma}\label{lemma:continuous_dependence}
Let $\mu$ sufficiently large be fixed. For $0<A_1<A_2<B_1<B_2$ as in Remark \ref{rem:existence_sol}, define
\begin{equation}\label{eq:I_def}
I=\left\{ (r,a,b): \, A_1<a<A_2, \, B_1<b< B_2, \, a<r<b \right\}.
\end{equation}
Then the map $ I\ni (r,a,b) \mapsto u_{\mu,+}(r;a,b)$ is continuous. 

Similarly, in the case of the ball, let $0<B_1<B_2$ be as in Remark \ref{rem:existence_sol} and $I=\left\{ (r,b): \, B_1<b< B_2, \, 0\leq r<b \right\}$. Then the map $ I\ni (r,b) \mapsto u_{\mu,+}(r;0,b)$ is continuous.
\end{lemma}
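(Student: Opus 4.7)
Fix $(a_0,b_0)\in(A_1,A_2)\times(B_1,B_2)$ and set $\tilde u_{0}(s):=u_{\mu,+}(a_0+(b_0-a_0)s;a_0,b_0)$. The plan is to apply the implicit function theorem on a fixed reference interval in order to obtain a $C^1$ curve of radial solutions passing through $u_{\mu,+}(\cdot;a_0,b_0)$, and then to identify that curve with $u_{\mu,+}(\cdot;a,b)$ by invoking the uniqueness statement of Theorem \ref{thm:uniqueness_minimal_energy_sol} together with the nondegeneracy supplied by Theorem \ref{thm:non_degeneracy}.

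First I transport \eqref{eq:u_mu_annulus} to the fixed interval $[0,1]$ through the affine change of variables $r\mapsto s=(r-a)/(b-a)$, so that a radial $u$ on $B_b\setminus B_a$ becomes $\tilde u(s):=u(a+(b-a)s)$. The equation then reads $F(\tilde u,a,b)=0$, with
\[
F(\tilde u,a,b):=-\frac{\tilde u''}{(b-a)^2}-\frac{(N-1)\,\tilde u'}{(b-a)(a+(b-a)s)}+\tilde u-e^{\mu(\tilde u-1)}.
\]
Putting the Neumann conditions into the domain, I set $X:=\{\tilde u\in C^{2,\alpha}([0,1]):\ \tilde u'(0)=\tilde u'(1)=0\}$ and $Y:=C^{0,\alpha}([0,1])$; the map $F:X\times(A_1,A_2)\times(B_1,B_2)\to Y$ is jointly $C^\infty$. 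Its partial Fréchet derivative $\partial_{\tilde u}F$ at $(\tilde u_0,a_0,b_0)$ is, up to the affine change of variables, precisely the linearized Neumann operator appearing in \eqref{eq:v_mu}. Theorem \ref{thm:non_degeneracy} asserts that the latter has trivial kernel, so, being Fredholm of index zero, it is an isomorphism; consequently so is $\partial_{\tilde u}F$ at $(\tilde u_0,a_0,b_0)$.

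The implicit function theorem then supplies a neighbourhood $V\subset(A_1,A_2)\times(B_1,B_2)$ of $(a_0,b_0)$ and a $C^1$ map $V\ni(a,b)\mapsto\tilde u(\cdot;a,b)\in X$ with $F(\tilde u(\cdot;a,b),a,b)=0$ and $\tilde u(\cdot;a_0,b_0)=\tilde u_0$. For $(a,b)$ close to $(a_0,b_0)$ the function $\tilde u(\cdot;a,b)$ is $C^1$-close to $\tilde u_0$, hence remains positive and strictly increasing, and after pulling back yields a radial solution of \eqref{eq:u_mu_annulus} belonging to the cone $\Ccal_+(a,b)$. By the uniqueness statement of Theorem \ref{thm:uniqueness_minimal_energy_sol}, whose threshold can be chosen uniform in a neighbourhood of $(a_0,b_0)$, this solution must coincide with $u_{\mu,+}(\cdot;a,b)$. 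Continuity of $(a,b)\mapsto\tilde u(\cdot;a,b)$ in $C^0([0,1])$, combined with the continuity of the affine pullback in $(r,a,b)$, produces the desired joint continuity on $I$.

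In the case of the ball one uses instead the rescaling $s=r/b$ and works in the space of radial $C^{2,\alpha}$ functions, inside which the coordinate singularity of $\mathcal L$ at the origin is harmless and the Neumann condition at $s=0$ is automatic; both the nondegeneracy and uniqueness of the increasing mountain-pass solution carry over. The main obstacle in the whole scheme is precisely this last identification step: the implicit function theorem delivers \emph{a} nearby solution of the equation, and one needs to guarantee that it is exactly the variational one, which is why Theorem \ref{thm:uniqueness_minimal_energy_sol}, together with its uniformity in $(a,b)$, is indispensable.
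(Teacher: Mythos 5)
Your scheme (rescale to a fixed interval, apply the implicit function theorem using the nondegeneracy of Theorem \ref{thm:non_degeneracy}, then identify the resulting branch with $u_{\mu,+}$) breaks down at the identification step, and the failure is essentially a circularity. The implicit function theorem gives you \emph{a} $C^1$ branch $\tilde u(\cdot;a,b)$ of solutions through $u_{\mu,+}(\cdot;a_0,b_0)$, unique only \emph{within a small neighbourhood of $\tilde u_0$ in $X$}. Theorem \ref{thm:uniqueness_minimal_energy_sol}, however, does not say that every increasing solution of \eqref{eq:u_mu_annulus} coincides with $u_{\mu,+}(\cdot;a,b)$: it says that the min-max level $c_{\mu,+}(a,b)$ is uniquely achieved by a multiple of $u_{\mu,+}(\cdot;a,b)$. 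Your branch solution at $(a,b)\neq(a_0,b_0)$ lies in the cone and on the Nehari set, so its energy is $\geq c_{\mu,+}(a,b)$, but nothing in your argument shows that it actually attains $c_{\mu,+}(a,b)$; hence the uniqueness theorem does not apply to it. The alternative way to close the gap would be to show that $u_{\mu,+}(\cdot;a,b)$ enters the local-uniqueness neighbourhood furnished by the implicit function theorem as $(a,b)\to(a_0,b_0)$ --- but that is exactly the continuity statement you are trying to prove. To repair this one needs a compactness argument on the variational family itself: this is what the paper does. Its proof takes $(a_n,b_n)\to(a_*,b_*)$, uses the uniform bounds of Lemma \ref{lemma:a_priori_bounds} to extract a weak $H^1$ limit $\tilde u$ of the (trivially extended) solutions $u_{\mu,+}(\cdot;a_n,b_n)$, shows $\tilde u$ solves the equation on $B_{b_*}\setminus B_{a_*}$ and belongs to the cone and Nehari set, and then derives a contradiction from the strict energy inequality $E_\mu(u_*-\underline u_\mu;a_*,b_*)<E_\mu(\tilde u-\underline u_\mu;a_*,b_*)$ (which is where Theorem \ref{thm:uniqueness_minimal_energy_sol} is legitimately used) combined with weak lower semicontinuity and the continuity of $E_\mu$ in $(a,b)$. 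Your implicit-function-theorem route is, in spirit, how the paper later upgrades continuity to $C^1$ dependence (Lemma \ref{lemma:C1_dependence}, via \cite[Lemma 5.8]{BonheureGrossiNorisTerracini2015}), but it cannot replace the compactness step that proves continuity in the first place.

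A secondary, fixable point: the claim that $C^1$-closeness to $\tilde u_0$ forces the perturbed solution to be nondecreasing is not immediate, because the Neumann conditions force $u'(a)=u'(b)=0$, so closeness in $C^1$ does not by itself exclude small oscillations near the endpoints or near the level $u=1$. One can rule these out, but only by using the equation (at an interior critical point $r_0$ one has $u''(r_0)=h(u(r_0))$ with $h$ as in \eqref{eq:x-exp(x-1)}, so critical points with $u<1$ are strict minima and those with $u>1$ strict maxima, which combined with $u(a)<1<u(b)$ and $u'>0$ on the region where $u_0$ crosses $1$ forces monotonicity); as written, this step is asserted rather than proved. Finally, note that the thresholds in Theorems \ref{thm:non_degeneracy} and \ref{thm:uniqueness_minimal_energy_sol} depend on $(a,b)$, so some uniformity over the rectangle $(A_1,A_2)\times(B_1,B_2)$ must be invoked, as the paper does through the last sentence of Theorem \ref{thm:uniqueness_minimal_energy_sol}.
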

\begin{proof}
We prove the result in the case of the annulus, the case of the ball being analogous.
Let $(r,a_n,b_n)$ be a sequence in $I$ such that $a_n\to a_*$, $b_n\to b_*$. 
In the following $\mu$ is fixed and we consider sequences in $n$, hence we denote
\begin{equation}
u_n(r):=u_{\mu,+}(r;a_n,b_n), \quad u_*(r):=u_{\mu,+}(r;a_*,b_*).
\end{equation}
Let also $\hat{u}_n$, $\hat{u}_*$ be the trivial extensions of $u_n$, $u_*$ in the interval $[A,B]:=[A_1,B_2]$ (extend as a constant outside $(a_n,b_n)$). 

Since $\{ \hat{u}_n  \}$ is bounded in $H^1(B_{B}\setminus B_{A})$, there exists $\tilde{u}\in H^1(B_{B}\setminus B_{A})$ such that (up to a subsequence)
\begin{equation*}
\hat{u}_n \rightharpoonup \tilde{u} \quad \text{weakly in } H^1(B_{B}\setminus B_{A}).
\end{equation*}
We have to prove that $\tilde{u}\equiv u_*$.

By the pointwise convergence, $\tilde{u}$ is non-decreasing and $\tilde{u}\geq \underline{u}_\mu$. 
Let $\varphi\in C_c^\infty(B_{\b_*}\setminus B_{\a_*})$, then $\varphi\in C_c^\infty(B_{b_n}\setminus B_{a_n})$ for $n$ sufficiently large and the $H^1$-weak convergence implies
\begin{equation*}
\int_{B_{b_*}\setminus B_{a_*}} \left( \nabla\tilde{u}\cdot\nabla\varphi +\tilde{u}\varphi\right) \,dx 
= \int_{B_{\b_*}\setminus B_{\a_*}} e^{\mu(\tilde{u}-1)} \varphi \,dx.
\end{equation*}
Therefore both $\tilde{u}$ and $u_*$ solve equation \eqref{eq:u_mu_annulus} in $B_{\b_*}\setminus B_{\a_*}$. In particular, $\tilde{u}$ is bounded by Lemma \ref{lemma:a_priori_bounds} and $\tilde{u}-\underline{u}_\mu$ belongs to the Nehari set $\Ncal$ in \eqref{eq:nehari_def}. Therefore $\tilde{u}$ can be used as a test function for $c_{\mu,+}(\a_*,\b_*)$ and the uniqueness result in Theorem \ref{thm:uniqueness_minimal_energy_sol} provides
\begin{equation}
E_\mu(u_*-\underline{u}_\mu;a_*,b_*)<E_\mu(\tilde{u}-\underline{u}_\mu;a_*,b_*).
\end{equation}
On the other hand, we have by the $H^1$-convergence
\begin{equation}
E_\mu(\tilde{u}-\underline{u}_\mu;a_*,b_*) \leq \liminf_{n\to\infty} E_\mu(\hat{u}_n-\underline{u}_\mu;a_*,b_*).
\end{equation}
We combine the two previous inequlity and the continuity of $E_\mu(\cdot;a,b)$ with respect to $a$ and $b$ to obtain
\begin{multline}
\lim_{n\to \infty} E_\mu(\hat{u}_*-\underline{u}_\mu;a_n,b_n) =E_\mu(u_*-\underline{u}_\mu;a_*,b_*)
< E_\mu(\tilde{u}-\underline{u}_\mu;a_*,b_*) \\
\leq \liminf_{n\to\infty} E_\mu(\hat{u}_n-\underline{u}_\mu;a_*,b_*) 
= \leq \liminf_{n\to\infty} E_\mu(u_n-\underline{u}_\mu;a_n,b_n).
\end{multline}
This implies that $\hat{u}_*$ achieves $c_{\mu,+}(a_n,b_n)$ for $n$ large, which contradicts Theorem \ref{thm:uniqueness_minimal_energy_sol}.
\end{proof}

\begin{lemma}\label{lemma:C1_dependence}
In the same assumptions of the previous lemma, the maps $ I\ni (r,a,b) \mapsto u_{\mu,+}(r;a,b)$ and $ I\ni (r,b) \mapsto u_{\mu,+}(r;0,b)$ are of class $C^1$.
\end{lemma}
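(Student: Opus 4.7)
The plan is to straighten out the moving domain into a fixed reference interval and then apply the implicit function theorem, using the nondegeneracy established in Theorem \ref{thm:non_degeneracy}. We only treat the annulus case, the ball case being obtained by fixing $a=0$.

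Fix $(a_*,b_*)$ and a small neighborhood $U \subset (A_1,A_2)\times(B_1,B_2)$ of $(a_*,b_*)$. For $(a,b)\in U$, introduce the diffeomorphism $\Phi_{a,b}:[0,1]\to[a,b]$, $\Phi_{a,b}(s)=a+(b-a)s$, and set $w(s;a,b):=u_{\mu,+}(\Phi_{a,b}(s);a,b)$. By the radial equation in \eqref{eq:u_mu_annulus}, $w$ solves, on $(0,1)$,
\begin{equation*}
-\frac{w''(s)}{(b-a)^2} - \frac{N-1}{a+(b-a)s}\frac{w'(s)}{b-a} + w(s) - e^{\mu(w(s)-1)} = 0, \quad w'(0)=w'(1)=0.
\end{equation*}
Define
\begin{equation*}
X := \{w\in C^{2,\alpha}([0,1]) : w'(0)=w'(1)=0\}, \qquad Y := C^{0,\alpha}([0,1]),
\end{equation*}
and the nonlinear operator $F:X\times U \to Y$ by letting $F(w;a,b)(s)$ equal the left-hand side of the equation above. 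Then $F$ is of class $C^\infty$ in $(w,a,b)$ because the coefficients depend smoothly on $(a,b)\in U$ and the nonlinearity is analytic.

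At $w_* := w(\cdot;a_*,b_*)$ we have $F(w_*;a_*,b_*)=0$, and the partial linearization reads
\begin{equation*}
\partial_w F(w_*;a_*,b_*)[\varphi] = -\frac{\varphi''}{(b_*-a_*)^2} - \frac{N-1}{a_*+(b_*-a_*)s}\frac{\varphi'}{b_*-a_*} + \varphi - \mu e^{\mu(w_*-1)}\varphi.
\end{equation*}
Pulling back through $\Phi_{a_*,b_*}^{-1}$, an element $\varphi\in X$ lies in the kernel of $\partial_w F(w_*;a_*,b_*)$ if and only if $v_\mu(r):=\varphi(\Phi_{a_*,b_*}^{-1}(r))$ is a radial solution of \eqref{eq:v_mu} on $B_{b_*}\setminus B_{a_*}$. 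For $\mu$ sufficiently large, Theorem \ref{thm:non_degeneracy} forces $v_\mu\equiv 0$, hence $\ker \partial_w F(w_*;a_*,b_*)=\{0\}$. Standard Schauder theory for this one-dimensional elliptic operator with Neumann conditions shows it is a Fredholm operator of index zero between $X$ and $Y$, so trivial kernel implies invertibility.

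The implicit function theorem in Banach spaces then gives a $C^1$ (in fact $C^\infty$) map $(a,b)\mapsto w(\cdot;a,b)\in X$ on a possibly smaller neighborhood of $(a_*,b_*)$. By the continuity already established in Lemma \ref{lemma:continuous_dependence} and the local uniqueness coming from Theorem \ref{thm:uniqueness_minimal_energy_sol}, this smooth branch coincides with the one produced by the variational construction. Finally, on the open set $I$ defined in \eqref{eq:I_def}, the composition
\begin{equation*}
u_{\mu,+}(r;a,b) = w\!\left(\frac{r-a}{b-a};\,a,b\right)
\end{equation*}
is a $C^1$ function of $(r,a,b)$, since $w\in C^{2,\alpha}([0,1])$ depends in a $C^1$ manner on $(a,b)$ and the change of variables is smooth on $I$. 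The ball case is identical, fixing $a=0$ and only varying $b$.

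The main obstacle is ensuring the invertibility of the linearized operator: this is exactly what Theorem \ref{thm:non_degeneracy} provides, so the work done there is precisely what allows the implicit function theorem to kick in and upgrade continuity to $C^1$ regularity.
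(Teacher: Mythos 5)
Your proposal is correct and takes essentially the same route as the paper, whose proof simply straightens the variable domain onto a fixed interval by an affine change of variables and invokes the argument of \cite[Lemma 5.8]{BonheureGrossiNorisTerracini2015}, i.e.\ the implicit function theorem with invertibility of the linearization supplied by the nondegeneracy result (Theorem \ref{thm:non_degeneracy}), together with the identification of the implicit-function branch with the variational solution via continuity and uniqueness. The only points worth flagging are cosmetic: the paper rescales to $[A_1,B_2]$ rather than $[0,1]$, and in the ball case you should view the transformed equation as a radial problem on the unit ball (or work with the divergence form $-(r^{N-1}u')'+r^{N-1}u=r^{N-1}e^{\mu(u-1)}$) so that the coefficient $(N-1)/s$ at $s=0$ does not obstruct the Schauder/Fredholm setting.
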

\begin{proof}
The lemma can be proved exactly as in \cite[Lemma 5.8]{BonheureGrossiNorisTerracini2015}, by defining $A=A_1$, $B=B_2$ and 
\begin{equation}
\hat{u}_{\mu,+}(s;a,b)= u_{\mu,+}(hs+k;a,b),
\end{equation}
with
\[
h=\frac{a-b}{A-B} \quad\text{and}\quad k=\frac{Ab-Ba}{A-B}. \qedhere
\]
\end{proof}

\subsection{The decreasing solution in the annulus}\label{subsec:decreasing_annulus}

In this section we consider problem \eqref{eq:u_mu_annulus} in an annulus $B_b\setminus B_a$ with $a>0$. In this case, the a priori bounds on the solutions come from the continuity of the embedding $W^{1,p}_{rad}(B_b\setminus B_a)\hookrightarrow L^\infty(B_b\setminus B_a)$. More precisely, we have the following.

\begin{lemma}\label{lemma:a_priori_bounds_annulus}
Let $a>0$. There exists $C>0$ independent of $\mu$ such that every solution $u$ of \eqref{eq:u_mu_annulus} satisfies
\begin{equation}\label{eq:a_priori_bounds_annulus}
\|u\|_{C^1(B_b\setminus B_a)} \leq C.
\end{equation}
\end{lemma}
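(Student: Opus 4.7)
The plan is to exploit the annular geometry (which allows $r \geq a > 0$ throughout) to convert the $L^1$-type bounds of Lemma \ref{lemma:Jensen} into pointwise $C^1$-bounds, thus bypassing the need for the sign condition $u(0)<1$ imposed in Lemma \ref{lemma:a_priori_bounds}.

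First I would observe that Lemma \ref{lemma:Jensen} extends verbatim to the annular case, since its proof relies only on integrating the equation against $1$ and using the Neumann condition on $\partial(B_b\setminus B_a)$, together with Jensen's inequality. This yields
\[
\|u\|_{L^1(B_b\setminus B_a)}+\|e^{\mu(u-1)}\|_{L^1(B_b\setminus B_a)}+\|u\|_{W^{2,1}(B_b\setminus B_a)}\leq C,
\]
with $C$ independent of $\mu$. Next, I would rewrite the radial equation in the divergence form
\[
(r^{N-1}u')'=r^{N-1}\bigl(u-e^{\mu(u-1)}\bigr),
\]
and integrate from $a$ to $r$. Using the Neumann condition $u'(a)=0$, this yields
\[
r^{N-1}u'(r)=\int_a^r s^{N-1}\bigl(u(s)-e^{\mu(u(s)-1)}\bigr)\,ds.
\]
Since $a>0$, the prefactor $r^{N-1}$ is bounded below by $a^{N-1}>0$ on $[a,b]$, so passing to absolute values and using the uniform $L^1$ bounds from the first step gives $\|u'\|_{L^\infty([a,b])}\leq C$ uniformly in $\mu$.

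To upgrade this to an $L^\infty$ bound on $u$ itself, I would use the mean value: the $L^1$ estimate on $u$ produces some $r_0\in[a,b]$ with $|u(r_0)|\leq \|u\|_{L^1([a,b])}/(b-a)\leq C$, and then the Lipschitz bound on $u$ gives $|u(r)|\leq |u(r_0)|+(b-a)\|u'\|_{L^\infty}\leq C$ for every $r\in[a,b]$. Combining, $\|u\|_{C^1(B_b\setminus B_a)}\leq C$ uniformly in $\mu$, which is the required estimate. Equivalently, one can simply invoke the radial Sobolev embedding $W^{2,1}_{rad}(B_b\setminus B_a)\hookrightarrow C^1(\overline{B_b\setminus B_a})$ for $a>0$, whose validity reduces, via the weight $r^{N-1}$ being bounded above and below by positive constants, to the one-dimensional embedding $W^{2,1}([a,b])\hookrightarrow C^1([a,b])$.

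There is no genuine obstacle here: the only subtlety is recognising that the restriction to an annulus with $a>0$ makes the radial problem effectively one-dimensional on a compact interval bounded away from the degenerate point $r=0$, so that one loses all difficulty associated with the concentration phenomena that forced the $u(0)<1$ hypothesis in the ball case. The price paid is, of course, that the constant $C$ in \eqref{eq:a_priori_bounds_annulus} will depend on $a$ (blowing up as $a\to 0^+$), consistently with the fact that no such universal estimate can hold on the full ball.
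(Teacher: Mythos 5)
Your proof is correct and follows essentially the same route as the paper: both rely on the uniform $L^1$-bounds of Lemma \ref{lemma:Jensen} (which indeed carry over to the annulus) and on integrating the radial equation from $a$, where the factor $r^{N-1}\geq a^{N-1}>0$ turns the $L^1$-control of $u$ and $e^{\mu(u-1)}$ into a uniform bound on $u'$. The only cosmetic difference is the order of the steps: the paper first gets the $L^\infty$-bound on $u$ from the embedding $W^{1,p}_{rad}(B_b\setminus B_a)\hookrightarrow L^\infty(B_b\setminus B_a)$ and then bounds $u'$, whereas you bound $u'$ first and recover $\|u\|_{L^\infty}$ by a mean-value plus Lipschitz argument, which is an equally valid variant of the same idea.
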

\begin{proof}
From the uniform bounds in Lemma \ref{lemma:Jensen} and from the continuity of the embedding $W^{1,p}_{rad}(B_b\setminus B_a)\hookrightarrow L^\infty(B_b\setminus B_a)$, we deduce that $u$ is uniformly bounded in the $L^\infty$-norm. By integrating \eqref{eq:u_mu_annulus} and using again Lemma \ref{lemma:Jensen} , we obtain
\begin{equation*}
|u'(r)| \leq \frac{1}{r^{N-1}} \int_a^r (u+e^{\mu(u-1)}) s^{N-1} \,ds,
\end{equation*}
and hence the $C^1$-bound since $r\geq a>0$.
\end{proof}

Hence we can prove the existence of a decreasing solution by working in the set
\begin{multline}
\Ccal_{-}(a,b)=\{u\in H^1_{rad}(B_b\setminus B_a):\ 0\leq u(r)\leq C \text{ for every } a \leq r \leq b \\ 
u(r)\geq u(s) \text{ for every } a < r\leq s \leq b\}.
\end{multline}

The decreasing solution in the annulus has the same properties as the increasing solution, which we state without proof.

\begin{theorem}\label{thm:decreasing_sol_existence}
Let $a>0$. For $\mu>\l_2^{rad}(a,b)$ there exists a decreasing radial solution $u_{\mu,-}(r)=u_{\mu,-}(r;a,b)$ of \eqref{eq:u_mu_annulus}, which has the following variational characterization
\begin{equation}\label{eq:zeta_mu_-_def}
z_{\mu,-}(r):=u_{\mu,-}(r)-\underline{u}_\mu \in \Ccal_{-}(a,b) 
\end{equation}
and
\begin{equation}\label{eq:c_mu_-_def}
E_\mu(z_{\mu,-};a,b)=\inf_{\substack{z\in \Ccal_{-}(a,b) \\ z\not\equiv0}} \sup_{t\geq0} E_\mu(tz;a,b)=:c_{\mu,-}(a,b).
\end{equation}

As $\mu\to\infty$ we have that
\begin{equation}
u_{\mu,-}(\cdot;a,b)\to u_{\infty,-}(\cdot;a,b):=G(\cdot,a;a,b)
\end{equation}
in $H^1(B_b\setminus B_a)\cap C^{0,\gamma}(\overline{B_b\setminus B_a})\cap C^{\infty}(\overline{B_{b}\setminus B_{a+\varepsilon}})$ for every $\gamma\in(0,1)$ and $\eps>0$.

Moreover
\begin{equation}
\lim_{\mu\to\infty} c_{\mu,-}(a,b) =\frac{|\partial B_a|}{2} u_{\infty,-}'(a;a,b).
\end{equation}
and
\begin{equation}
\lim_{\mu\to\infty} \frac{e^{\mu(u_{\mu,-}(a;a,b)-1)}}{\mu}=\frac{(u_{\infty,-}'(a;a,b))^2}{2}.
\end{equation}

For $\mu$ sufficiently large, $u_{\mu,-}(\cdot;a,b)$ is non-degenerate and $c_{\mu,-}(a,b)$ is uniquely achieved by a multiple of $u_{\mu,-}(\cdot;a,b)$. As a consequence, the map $I \ni (r,a,b) \mapsto u_{\mu,-}(r;a,b)$ is of class $C^1$ for $\mu$ sufficiently large, with $I$ defined in \eqref{eq:I_def}.
\end{theorem}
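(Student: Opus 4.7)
The plan is to transpose the arguments of Sections \ref{subsec:existence_increasing}--\ref{subsec:regular_dependence} to the decreasing setting, replacing the cone $\Ccal_+(a,b)$ by $\Ccal_-(a,b)$ and swapping the roles of the outer and inner boundaries. A welcome simplification is that, since $a>0$, Lemma \ref{lemma:a_priori_bounds_annulus} supplies uniform $C^1$-bounds on every solution of \eqref{eq:u_mu_annulus} with no assumption on $u(0)$; every invocation of Lemma \ref{lemma:a_priori_bounds} in the previous sections would here be replaced by Lemma \ref{lemma:a_priori_bounds_annulus}.

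For existence I would first perform the change of variables $z=u-\underline{u}_\mu$, bringing \eqref{eq:u_mu_annulus} into the form $-\Delta z+z=\underline{u}_\mu(e^{\mu z}-1)$, and apply \cite[Theorem 1.3]{BonheureNorisWeth2012} in $\Ccal_-(a,b)$. The mountain-pass geometry at $0$ and the lower bound $\inf_{\Ncal}\|z\|_{H^1}>0$ would be verified by the same three-step argument as in the proof of Theorem \ref{thm:increasing_sol_existence}: a strict local minimum at $0$ from \eqref{eq:underline_u_mu_asymptotics}, uniqueness of the projection onto $\Ncal$ from the monotonicity of $f(z)/z$, and the lower bound on $\Ncal$ from the continuous embedding $H^1_{rad}(B_b\setminus B_a)\hookrightarrow L^\infty$, which here holds without any monotonicity trick precisely because $a>0$.

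For the asymptotic profile, the uniform $C^1$-bounds would yield $u_{\mu,-}\to u_{\infty,-}$ in $C^{0,\gamma}$ along a subsequence, and I would prove $u_{\infty,-}(a)=1$ by the same contradiction argument as in the proof of Theorem \ref{thm:asymptotic_increasing_sol}: if $u_{\infty,-}(a)>1$, integrating the radial equation on an interval $(a,s)$ makes $u'_{\mu,-}$ blow up. The identification $u_{\infty,-}=G(\cdot,a;a,b)$ would then follow from the two-sided comparison used in Lemma \ref{lemma:gamma_convergence_first_ineq} and in the proof of Theorem \ref{thm:asymptotic_increasing_sol}: the lower bound for $c_{\infty,-}(a,b):=\inf\{\|z\|^2_{H^1}/2:\,z\in\Ccal_-(a,b),\,z(a)=1\}$ comes from using $u_{\infty,-}-\underline{u}_\mu$ as competitor, and the upper bound is obtained by projecting $G(\cdot,a;a,b)/G(a,a;a,b)$ onto $\Ncal$ and using Lemma \ref{lemma:underline_u_mu_limit}. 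The limit values of $c_{\mu,-}(a,b)$ and of $e^{\mu(u_{\mu,-}(a)-1)}/\mu$ would then follow, the latter from the Pohozaev identity for $u_{\mu,-}$ and $u_{\infty,-}$ exactly as in Lemma \ref{lemma:pohozaev}.

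For nondegeneracy I would rescale near $r=a$ by setting $\tilde u_\mu(r):=\mu\bigl[u_{\mu,-}(a+r/(k\mu))-u_{\mu,-}(a)\bigr]$ for $r\in[0,(b-a)k\mu]$ with $k:=-u_{\infty,-}'(a;a,b)/\sqrt{2}>0$, and show $\tilde u_\mu\to\log\bigl(4e^{\sqrt{2}r}/(1+e^{\sqrt{2}r})^2\bigr)$ in $C^1_{loc}[0,+\infty)$ as in Lemma \ref{lemma:u_tilde_mu_convergence}. The analogue of Lemma \ref{lemma:non_deg_lebesgue} is obtained by integrating the radial equation from $a+s/(k\mu)$ to $b$ instead of from $a$ to $b+s/(k\mu)$. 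The rescaled linearized equation would then force every nontrivial solution of \eqref{eq:v_mu} to vanish for $\mu$ large by the contradiction carried out in the proof of Theorem \ref{thm:non_degeneracy}. Uniqueness of $c_{\mu,-}(a,b)$ and the $C^1$-dependence on $(a,b)$ finally follow from nondegeneracy as in Theorem \ref{thm:uniqueness_minimal_energy_sol} and Lemmas \ref{lemma:continuous_dependence}--\ref{lemma:C1_dependence}. The main point requiring care is the bookkeeping of signs and integration limits coming from $u_{\infty,-}'(a;a,b)<0$ and from the reversal of the rescaling interval at $r=a$, but no essentially new estimate is needed.
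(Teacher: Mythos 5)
Your proposal is correct and follows essentially the route the paper itself intends: the paper states this theorem without proof, remarking only that one works in the cone $\Ccal_{-}(a,b)$ with the a priori bounds of Lemma \ref{lemma:a_priori_bounds_annulus} replacing Lemma \ref{lemma:a_priori_bounds}, and then transposes the arguments of Sections \ref{subsec:existence_increasing}--\ref{subsec:regular_dependence} with the roles of the inner and outer boundaries exchanged, exactly as you do (including the blow-up at $r=a$ with $k=|u_{\infty,-}'(a;a,b)|/\sqrt{2}$ and the reversed integration limits in the analogue of Lemma \ref{lemma:non_deg_lebesgue}). Your attention to the sign of $u_{\infty,-}'(a;a,b)$ and to the orientation of the rescaling interval is precisely the only bookkeeping the transposition requires.
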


\section{Existence and convergence of the k-layer solutions}\label{sec:existence_convergence_k_layer}

\subsection{The 1-layer solution}\label{subsec:1layer} We are in a position to prove the existence and convergence of a solution with one interior maximum point.

\begin{theorem}\label{BGNT6.1}
For $\mu$ sufficiently large there exists a radial solution $u_{\mu,1-layer}(r;a,b)$ of \eqref{eq:u_mu_annulus} having exactly one maximum point at $r=\bar{s}_\mu(a,b)$. Furthermore
\begin{equation}
\bar{s}_\mu(a,b)\to \bar{s}_\infty=\bar{s}_\infty(a,b), \qquad 
u_{\mu,1-layer}(r;a,b)\to \frac{G(r,\bar{s}_\infty;a,b)}{G(\bar{s}_\infty,\bar{s}_\infty;a,b)}
\end{equation}
as $\mu\to+\infty$. The point $\bar{s}_\infty$ lies in the interior of the interval $(a,b)$ and it is the unique point which satisfies
\begin{equation}\label{eq:s_bar_def}
\left. \left( \frac{G(r,r;a,b)}{r^{N-1}} \right)'\right|_{r=\bar{s}_\infty}=0.
\end{equation}
\end{theorem}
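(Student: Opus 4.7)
The plan is to construct the $1$-layer solution by gluing: for each admissible $s\in(a,b)$ (meaning $\mu>\max\{\l_{2}^{rad}(a,s),\l_{2}^{rad}(s,b)\}$, which defines a non-empty open interval for $\mu$ large in view of Remark \ref{rem:existence_sol}), I would consider the profile
$$W_{s}(r):=\begin{cases}u_{\mu,+}(r;a,s), & r\in[a,s],\\ u_{\mu,-}(r;s,b), & r\in[s,b],\end{cases}$$
with $u_{\mu,+}$ from Theorem \ref{thm:increasing_sol_existence} and $u_{\mu,-}$ from Theorem \ref{thm:decreasing_sol_existence}. The Neumann conditions at $r=s$ make $W_{s}$ automatically $C^{1}$ with $W_{s}'(s)=0$, and the equation forces $W_{s}''(s^{\pm})=W_{s}(s^{\pm})-e^{\mu(W_{s}(s^{\pm})-1)}$. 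Hence $W_{s}$ is a classical (in fact $C^{\infty}$) radial solution of \eqref{eq:u_mu_annulus} with exactly one interior maximum at $s$ if and only if the two peak values match, $u_{\mu,+}(s;a,s)=u_{\mu,-}(s;s,b)$.

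To select the good $s$, I would look for a critical point of the reduced one-dimensional functional $\Phi_{\mu}(s):=c_{\mu,+}(a,s)+c_{\mu,-}(s,b)$, which is $C^{1}$ in $s$ by Lemma \ref{lemma:C1_dependence}. Since $u_{\mu,\pm}$ are the unique achievers (Theorem \ref{thm:uniqueness_minimal_energy_sol}), the envelope theorem combined with Hadamard's boundary-variation formula applied to the Lagrangian $\tfrac{1}{2}(u')^{2}+\tfrac{1}{2}u^{2}-\mu^{-1}e^{\mu(u-1)}$ yields
$$\Phi_{\mu}'(s)=|\partial B_{s}|\,\bigl[L_{\mu}(\gamma_{+}(s))-L_{\mu}(\gamma_{-}(s))\bigr],\qquad L_{\mu}(x):=\frac{x^{2}}{2}-\frac{e^{\mu(x-1)}}{\mu},$$
with $\gamma_{\pm}(s)$ the two peak values. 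By \eqref{eq:u_mu<1>1} (and its obvious counterpart for $u_{\mu,-}$) both peaks exceed $1$, and $L_{\mu}$ is strictly decreasing on $(1,+\infty)$, so $\Phi_{\mu}'(s)=0$ is equivalent to the matching condition. Existence of an interior critical point then follows because $\Phi_{\mu}(s)\to+\infty$ at both ends of the admissible $s$-interval (one of the two sub-intervals ceases to support the mountain-pass construction as its second radial Neumann eigenvalue reaches $\mu$), whence $\Phi_{\mu}$ has an interior infimum $\bar{s}_{\mu}\in(a,b)$, producing the sought $C^{\infty}$ $1$-layer solution.

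To identify the limit, I would pass to $\mu\to\infty$ in $\Phi_{\mu}'(\bar{s}_{\mu})=0$, using Lemma \ref{lemma:pohozaev} and its analogue in Theorem \ref{thm:decreasing_sol_existence} to replace $e^{\mu(\gamma_{\pm}(s)-1)}/\mu$ by $\tfrac{1}{2}(\partial_{r}u_{\infty,\pm})^{2}$, and the improved convergence results of Section \ref{subsec:convergenceL} to upgrade this to a uniform-in-$s$ statement near $\bar{s}_{\infty}$. The limit of the matching condition reads $(\partial_{r}u_{\infty,+}(\bar{s}_{\infty};a,\bar{s}_{\infty}))^{2}=(\partial_{r}u_{\infty,-}(\bar{s}_{\infty};\bar{s}_{\infty},b))^{2}$, and rewriting both derivatives via the distributional jump identity $\partial_{r}u_{\infty,+}(s;a,s)-\partial_{r}u_{\infty,-}(s;s,b)=1/G(s,s;a,b)$ recasts this as \eqref{eq:s_bar_def}, equivalently (up to a positive constant) the Euler--Lagrange equation for the variational problem \eqref{eq:varphi_def} with $k=1$. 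Uniqueness of the interior zero of $(G(r,r;a,b)/r^{N-1})'$, together with Lemma \ref{lemma:continuous_dependence}, forces $\bar{s}_{\mu}\to\bar{s}_{\infty}$ and the convergence of $W_{\bar{s}_{\mu}}$ to $G(\cdot,\bar{s}_{\infty};a,b)/G(\bar{s}_{\infty},\bar{s}_{\infty};a,b)$. The hard part will be the uniform-in-$s$ limit of $\Phi_{\mu}'$: sharp control on $e^{\mu(\gamma_{\pm}(s)-1)}/\mu$ beyond the pointwise Lemma \ref{lemma:pohozaev} is required, which is precisely what Section \ref{subsec:convergenceL} is designed to supply, together with the uniqueness of the interior critical point of $G(r,r;a,b)/r^{N-1}$ on $(a,b)$.
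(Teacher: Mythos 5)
Your gluing set-up coincides with the paper's: on each side of $s$ you take the monotone solutions $u_{\mu,+}(\cdot;a,s)$ and $u_{\mu,-}(\cdot;s,b)$, and the glued function is a classical solution of \eqref{eq:u_mu_annulus} exactly when the peak values match, $u_{\mu,+}(s;a,s)=u_{\mu,-}(s;s,b)$. The gap is in how you produce the matching point. You select $\bar s_\mu$ as an interior critical point of $\Phi_\mu(s)=c_{\mu,+}(a,s)+c_{\mu,-}(s,b)$, and your existence argument rests on the claim that $\Phi_\mu(s)\to+\infty$ at the ends of the admissible $s$-interval because the mountain-pass construction ceases to exist there. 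That implication is unjustified and in fact false: testing the inf-sup \eqref{eq:c_mu_+_def} with a constant function (admissible in $\Ccal_{+}(a,s)$) gives $c_{\mu,+}(a,s)\leq \sup_{x\geq \underline{u}_\mu}\bigl(\tfrac{x^2}{2}-\tfrac{e^{\mu(x-1)}}{\mu}\bigr)\,|B_s\setminus B_a|\leq \tfrac12 |B_s\setminus B_a|$, so the reduced energy stays bounded (uniformly in $s$ and $\mu$) as the threshold $\l_2^{rad}(a,s)=\mu$ is approached; what happens there is that the solution degenerates towards the constant $1$ with finite energy, not that the level blows up. Moreover, your envelope/Hadamard computation of $\Phi_\mu'$ requires $s\mapsto u_{\mu,\pm}(\cdot\,;\cdot)$ to be differentiable on the whole admissible interval, but Lemma \ref{lemma:C1_dependence} rests on nondegeneracy and uniqueness (Theorems \ref{thm:non_degeneracy} and \ref{thm:uniqueness_minimal_energy_sol}), which are available only for $\mu$ large relative to fixed endpoints, i.e.\ for $s$ in compact subsets away from the ends where $\mu\approx\l_2^{rad}$ and the linearization is expected to degenerate. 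So neither the coercivity nor the global $C^1$ structure of $\Phi_\mu$ that your argument needs is available.

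The repair is precisely the paper's (lighter) route, which needs no information at the ends of the admissible interval and no uniform-in-$s$ estimates for this theorem: work directly with the matching function $L_\mu(s;a,b)$ of \eqref{defLmu} (your $\Phi_\mu'$ vanishes exactly when it does, since both encode $u_{\mu,+}(s;a,s)=u_{\mu,-}(s;s,b)$). The pointwise convergence of Lemma \ref{lemma:pohozaev} gives $L_\mu\to L_\infty$ as in \eqref{defLinf}; by \cite{BonheureGrossiNorisTerracini2015} (and Lemma \ref{lemma:appendix1} when $N=2$), $L_\infty(\cdot;a,b)$ is strictly increasing with a unique interior zero $\bar s_\infty$ characterized by \eqref{eq:s_bar_def}; evaluating $L_\mu$ at $\bar s_\infty\pm\delta$ gives a sign change for $\mu$ large, and the continuity of $L_\mu$ (Lemma \ref{lemma:continuous_dependence}) yields a zero $\bar s_\mu\in(\bar s_\infty-\delta,\bar s_\infty+\delta)$, whence also $\bar s_\mu\to\bar s_\infty$; the $C^1$ convergence of Section \ref{subsec:convergenceL} is only needed later, for the $k$-layer construction. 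Finally, your passage from $(u_{\infty,+}'(\bar s_\infty))^2=(u_{\infty,-}'(\bar s_\infty))^2$ to \eqref{eq:s_bar_def} via the jump relation of the Green function, and the uniqueness of that critical point, are asserted rather than proved; these are exactly the cited facts from \cite{BonheureGrossiNorisTerracini2015} (completed by the appendix for $N=2$), which must be invoked as such rather than derived from the jump identity alone.
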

\begin{proof}
For $s\in(a,b)$ let
\begin{equation}
\label{defLmu}
L_\mu(s;a,b):=\frac{e^{\mu(u_{\mu,+}(s;a,s)-1)} - e^{\mu(u_{\mu,-}(s;s,b)-1)} }{\mu}.
\end{equation}
By Lemma \ref{lemma:pohozaev} we have that $L_\mu(\cdot;a,b)$ converges pointwise to
\begin{equation}
\label{defLinf}
L_\infty(s;a,b):=\frac{(u_{\infty,+}'(s;a,s))^2-(u_{\infty,-}'(s;s,b))^2}{2}.
\end{equation}
When $N\geq 3$, it is proved in \cite[Lemma 2.4,Theorem 6.1]{BonheureGrossiNorisTerracini2015} that $L_\infty(\cdot;a,b)$ is strictly increasing and has a unique interior zero $\bar{s}_\infty(a,b)$ and that $\bar{s}_\infty$ satisfies \eqref{eq:s_bar_def}. In case $N=2$, the proof can be repeated without changes by making use of the result in Lemma \ref{lemma:appendix1}.

Since $L_\mu(\cdot;a,b)$ is continuous for $\mu$ sufficiently large (by Lemma \ref{lemma:continuous_dependence}), it has a zero $\bar{s}_\mu$. Then
\begin{equation}
u_{\mu,1-layer}(r;a,b):=\begin{cases}
u_{\mu,+}(r;a,\bar{s}_\mu) \quad\text{for } r\in [a,\bar{s}_\mu) \\
u_{\mu,-}(r;\bar{s}_\mu,b) \quad \text{for } r\in [\bar{s}_\mu,b]
\end{cases}
\end{equation}
solves \eqref{eq:u_mu_annulus}.
\end{proof}

\subsection{$C^1$-convergence of $L_\mu$}\label{subsec:convergenceL} In order to construct the $k$-layer solutions, we first prove that the functions $L_\mu$ defined in \eqref{defLmu} converge to $L_\infty$ in the $C^1$-norm.

\begin{lemma}\label{lemma:pohozaev_uniform}
Fix $a\in [0,1)$ and $a<B_1<B_2\leq1$. It holds
\begin{equation}\label{eq:pohozaev_uniform}
\lim_{\mu\to\infty} \sup_{b\in [B_1,B_2]} \left[ \frac{e^{\mu(u_{\mu,+}(b;a,b)-1)}}{\mu}
-\frac{(u_{\infty,+}'(b;a,b))^2}{2}\right]=0.
\end{equation}
\end{lemma}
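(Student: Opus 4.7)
The plan is to argue by contradiction, promoting the pointwise convergence of Lemma \ref{lemma:pohozaev} to a uniform one via a compactness argument in the parameter $b$. Suppose the conclusion fails: then there exist $\varepsilon_0>0$ and sequences $\mu_n\to\infty$, $b_n\in [B_1,B_2]$ with
\[
\left| \frac{e^{\mu_n(u_{\mu_n,+}(b_n;a,b_n)-1)}}{\mu_n}-\frac{(u_{\infty,+}'(b_n;a,b_n))^2}{2}\right|\geq \varepsilon_0.
\]
Up to extraction $b_n \to b_*\in [B_1,B_2]$. Since $u_{\infty,+}(\cdot;a,b)=G(\cdot,b;a,b)$ depends smoothly on $b$ (a classical property of the Green function of the 1D Sturm--Liouville operator $\mathcal{L}$, made explicit in the Appendix), the term $(u_{\infty,+}'(b_n;a,b_n))^2/2$ converges to $(u_{\infty,+}'(b_*;a,b_*))^2/2$, so it remains to establish
\[
\frac{e^{\mu_n(u_{\mu_n,+}(b_n;a,b_n)-1)}}{\mu_n}\longrightarrow \frac{(u_{\infty,+}'(b_*;a,b_*))^2}{2}.
\]

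To handle the fact that the domain $B_{b_n}\setminus B_a$ varies with $n$, I would rescale to the fixed interval $[a,b_*]$ via the affine map $r=a+c_n(s-a)$, $c_n:=(b_n-a)/(b_*-a)\to 1$, setting $w_n(s):=u_{\mu_n,+}(a+c_n(s-a);a,b_n)$. The uniform $C^1$-bounds of Lemmas \ref{lemma:a_priori_bounds} and \ref{lemma:a_priori_bounds_annulus} do not depend on $b$ (their proofs use only the monotonicity of $L(r)$ and the radial Sobolev embedding, respectively), so $w_n$ is bounded in $C^1([a,b_*])$ and Arzelà--Ascoli produces, along a further subsequence, $w_n\to w_*$ in $C^0([a,b_*])$. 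I then plan to repeat the arguments of Lemma \ref{lemma:u_mu_convergence} and Theorem \ref{thm:asymptotic_increasing_sol} on the rescaled equation to identify $w_*=u_{\infty,+}(\cdot;a,b_*)$: the bound $w_n\leq M(\mu_n)\searrow 1$ combined with the integrated equation forces $w_*\leq 1$ and $w_*(b_*)=1$, while the Neumann condition $w_*'(a)=0$ is preserved by the rescaling. Elliptic regularity then upgrades the convergence to $C^\infty_{loc}([a,b_*))$, since on any $[a,b_*-\delta]$ the exponential term $e^{\mu_n(w_n-1)}$ tends uniformly to zero by monotonicity and the bound $w_n\leq M(\mu_n)\searrow 1$.

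Finally I would write the Pohozaev identity for $u_{\mu_n,+}(\cdot;a,b_n)$ on $B_{b_n}\setminus B_a$, exactly as in the proof of Lemma \ref{lemma:pohozaev}, and rewrite every integral on $[a,b_*]$ through the change of variable $r=a+c_n(s-a)$. The resulting integrands $|w_n'|^2(a+c_n(s-a))^{N-1}$, $w_n^2(a+c_n(s-a))^{N-1}$, $w_n(a+c_n(s-a))^{N-1}$ are uniformly bounded on $[a,b_*]$ by the uniform $C^1$-bound, and converge pointwise on $[a,b_*)$ to the corresponding integrands for $u_{\infty,+}(\cdot;a,b_*)$; dominated convergence therefore produces in the limit the Pohozaev identity for $u_{\infty,+}(\cdot;a,b_*)$, which is precisely the sought convergence and contradicts $\varepsilon_0>0$.

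The main difficulty I expect is the identification of the limit $w_*$ in the rescaled problem: the nonlinearity $e^{\mu_n(w_n-1)}$ concentrates at $s=b_*$, so one must simultaneously control it away from $b_*$ and recover the correct boundary value $w_*(b_*)=1$. This is the place where the argument of Lemma \ref{lemma:u_mu_convergence} must be adapted most carefully, since the endpoint of integration now depends on $n$ and the $C^1$ convergence of $w_n$ near $s=b_*$ cannot be invoked.
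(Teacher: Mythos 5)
Your overall strategy (contradiction, compactness in $b$, uniform $C^1$ bounds independent of $b$, convergence on a fixed reference interval, and passage to the limit in the Pohozaev identity as in Lemma \ref{lemma:pohozaev}) is the same as the paper's, but the step where you identify the limit $w_*$ contains a genuine gap. You claim that on $[a,b_*-\delta]$ the term $e^{\mu_n(w_n-1)}$ tends uniformly to zero ``by monotonicity and the bound $w_n\leq M(\mu_n)\searrow 1$''. This does not follow: $M(\mu_n)\to 1$ gives no control on $\mu_n(M(\mu_n)-1)$, and in fact the statement you are trying to prove shows that at the right endpoint $\mu_n(u_{\mu_n,+}(b_n)-1)\sim\log\mu_n$, so the exponential term is of order $\mu_n$ there; smallness of $e^{\mu_n(w_n-1)}$ on $[a,b_*-\delta]$ requires knowing $w_n\leq 1-\delta'$ there, i.e.\ that the limit is strictly below $1$ away from $b_*$, which is precisely part of what must be proved. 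For the same reason, the data you list ($w_*$ nondecreasing, $w_*\leq 1$, $w_*'(a)=0$, $w_*(b_*)=1$) do not determine $w_*$: without knowing where the weak-$*$ limit of the measures $e^{\mu_n(w_n-1)}\,dx$ concentrates, a profile equal to $1$ on a whole subinterval $[r_0,b_*]$ and harmonic for $\mathcal L$ below $r_0$ is not excluded by these conditions, so the identification $w_*=u_{\infty,+}(\cdot;a,b_*)$, and with it the pointwise convergence of $|w_n'|^2$ needed in your dominated-convergence step, is not established.

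The way both the paper and Theorem \ref{thm:asymptotic_increasing_sol} actually pin down the limit is variational, not local: one projects the (normalized) Green function onto the Nehari set, shows $c_{\mu_n,+}(a,b_n)\to c_{\infty,+}(a,b_*)$ using Lemmas \ref{lemma:gamma_convergence_first_ineq}, \ref{lemma:nehari_bounded_uniformly} and \ref{lemma:underline_u_mu_limit} (now with $b=b_n$ varying, which is harmless since $G(\cdot,b_n;a,b_n)$ depends continuously on $b_n$), and then uses the fact that the limiting minimization problem has $G$ as its unique minimizer; this is also what yields the strong $H^1$ convergence and the $C^\infty$ convergence away from $b_*$ that Lemma \ref{lemma:pohozaev} needs. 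You do say you would ``repeat the arguments of Theorem \ref{thm:asymptotic_increasing_sol}'', but the argument you then actually sketch replaces this energy identification by the insufficient ODE shortcut above. If you carry out the energy argument with $b_n$ in place of $b$ (the paper does this with trivial constant extensions to $[a,B_2]$ rather than your affine rescaling; either device is fine), the rest of your proof, including the limit in the Pohozaev identity, goes through.
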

\begin{proof}
By contradiction, suppose that \eqref{eq:pohozaev_uniform} does not hold.
Thus we can find two sequences $\mu_n \rightarrow +\infty$ and $b_n\rightarrow b \in [B_1,B_2]$ such that
\[
\frac{e^{\mu_n(u_{\mu_n,+}(b_n;a,b_n)-1)}}{\mu_n}
-\frac{(u_{\infty,+}'(b_n;a,b_n))^2}{2} \rightarrow C >0.
\]
Using the smoothness of $u_{\infty,+}$, we have
\begin{equation}\label{eq:contradiction_pohozaev_uniform}
\frac{e^{\mu_n(u_{\mu_n,+}(b_n;a,b_n)-1)}}{\mu_n}
-\frac{(u_{\infty,+}'(b;a,b))^2}{2} \rightarrow C >0.
\end{equation}
By Lemma \ref{lemma:a_priori_bounds} (which holds independently of $b_n$) we have
\begin{equation}\label{eq:a_priori_bounds_b_n}
\sup_n \left( \|u_{\mu_n,+}'(\cdot;a,b_n)\|_{L^\infty(B_{b_n}\setminus B_a)}
+\|u_{\mu_n,+}(\cdot;a,b_n)\|_{L^\infty(B_{b_n}\setminus B_a)} \right) \leq C.
\end{equation}
Let $\hat{u}_{\mu_n,+}(r;a,b_n)$ be the trivial extension of $u_{\mu_n,+}(r;a,b_n)$ in the interval $[a,B_2]$, and analogously for $u_{\infty,+}(r;a,b)$. Then \eqref{eq:a_priori_bounds_b_n} implies
\[
\hat{u}_{\mu_n,+}(\cdot;a,b_n) \to \hat{u}_{\infty,+}(\cdot;a,b) \quad \text{in } C^{0,\gamma}(B_{B_2}\setminus B_a) \text{ and weak-} H^1(B_{B_2}\setminus B_a).
\]
By using the equation satisfied by $u_{\mu_n,+}(r;a,b_n)$ in $B_{b_n}\setminus B_a$, we see that the convergence is also strong in $H^1(B_{b}\setminus B_a)$. Then by repeating the proof of Lemma \ref{lemma:pohozaev} with $b=b_n$ and $\mu=\mu_n$ we obtain a contradiction with \eqref{eq:contradiction_pohozaev_uniform}.
\end{proof}

\begin{lemma}\label{lemma:u_b_derivative_bound}
Fix $a\in [0,1)$ and $a<B_1<B_2\leq1$.
There exists $C>0$ such that
\begin{equation}\label{eq:u_b_derivative_bound}
\sup_{\mu>1} \sup_{b\in[B_1,B_2]} \left\|\dfrac{\partial u_{\mu,+}}{\partial b}(.;a,b)\right\|_{L^\infty(B_b\setminus B_a)} \leq C.
\end{equation}
\end{lemma}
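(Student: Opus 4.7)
Let $\psi_\mu(r) := \partial_b u_{\mu,+}(r;a,b)$. By Lemma \ref{lemma:C1_dependence}, this function is well-defined and $C^1$ in $r$ for $\mu$ sufficiently large. Differentiating the PDE in \eqref{eq:u_mu_annulus} with respect to $b$, $\psi_\mu$ solves the linearized equation
\[
-\Delta \psi_\mu + \psi_\mu = \mu\, e^{\mu(u_{\mu,+}-1)}\, \psi_\mu \quad \text{in } B_b\setminus B_a,
\]
with $\psi_\mu'(a)=0$. Differentiating the identity $u_{\mu,+}'(b;a,b)=0$ with respect to $b$ gives $\psi_\mu'(b) = -u_{\mu,+}''(b;a,b)$; evaluating the radial equation at $r=b$ (with $u_{\mu,+}'(b)=0$) gives $\psi_\mu'(b) = e^{\mu(u_{\mu,+}(b)-1)} - u_{\mu,+}(b)$, which is of order $\mu$ by Lemma \ref{lemma:pohozaev}. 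This large inhomogeneous Neumann datum is the main obstacle to a direct $L^\infty$ bound.

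To remove it I subtract a correction built from $u_{\mu,+}'$. Fix a cutoff $\chi \in C^\infty([a,b])$ with $\chi \equiv 1$ on $[b-\delta,b]$ and $\chi \equiv 0$ on $[a,a+\delta]$ for some small $\delta$, and set $\bar{\psi}_\mu := \psi_\mu + \chi\, u_{\mu,+}'$. Since $u_{\mu,+}'(b)=0$, we have $(\chi u_{\mu,+}')'(b)= u_{\mu,+}''(b) = -\psi_\mu'(b)$, so $\bar{\psi}_\mu'(b)=0$; similarly $\bar{\psi}_\mu'(a)=0$. Writing $L_\mu := -\Delta + 1 - \mu e^{\mu(u_{\mu,+}-1)}$, one checks by differentiating the radial equation with respect to $r$ that $L_\mu u_{\mu,+}' = -(N-1)r^{-2} u_{\mu,+}'$, and a direct commutator computation yields
\[
L_\mu \bar{\psi}_\mu = L_\mu(\chi u_{\mu,+}') = -\chi\,\frac{N-1}{r^2} u_{\mu,+}' - \Bigl(\chi'' + \frac{N-1}{r}\chi'\Bigr) u_{\mu,+}' - 2\chi'\, u_{\mu,+}''.
\]
The first two terms are bounded in $L^\infty$ by Lemma \ref{lemma:a_priori_bounds} together with the vanishing of $\chi$, $\chi'$ near $r=0$ when $a=0$; the third term is potentially of order $\mu$ near $r=b$, but $\chi'$ is supported in $[a+\delta,b-\delta]$, where $u_{\mu,+}\to u_{\infty,+}$ in $C^\infty$ by \eqref{eq:convergence_strong_u_mu}, so $u_{\mu,+}''$ is uniformly bounded on $\mathrm{supp}(\chi')$. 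Hence $\|L_\mu \bar{\psi}_\mu\|_{L^\infty}$ is bounded uniformly in $\mu$ and in $b\in[B_1,B_2]$.

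It remains to invert $L_\mu$ uniformly. By Theorem \ref{thm:non_degeneracy}, for $\mu$ large the only solution of $L_\mu w = 0$ with homogeneous Neumann data is $w\equiv 0$, so $L_\mu$ is invertible. The bound $\|L_\mu^{-1}\|_{L^\infty\to L^\infty} \leq C$, uniform in $\mu$ and $b$, follows by a contradiction/blow-up argument mirroring the proof of Theorem \ref{thm:non_degeneracy}: if it failed one would extract a normalized sequence $\tilde{w}_n$ with $\|\tilde{w}_n\|_{L^\infty}=1$ and $L_{\mu_n}\tilde{w}_n \to 0$, and reapplying the rescaling steps of Lemmas \ref{lemma:u_tilde_mu_convergence}--\ref{lemma:non_deg_lebesgue} would force $\tilde{w}_n \to 0$, a contradiction. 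Combining the two estimates gives $\|\bar{\psi}_\mu\|_{L^\infty}\leq C$, whence
\[
\|\psi_\mu\|_{L^\infty} \leq \|\bar{\psi}_\mu\|_{L^\infty} + \|u_{\mu,+}'\|_{L^\infty} \leq C
\]
by Lemma \ref{lemma:a_priori_bounds}, which is \eqref{eq:u_b_derivative_bound}.
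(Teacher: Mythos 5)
Your strategy is in essence the paper's own: there too the $O(\mu)$ Neumann datum at $r=b$ is removed by adding $u_{\mu,+}'$ to $\partial_b u_{\mu,+}$ (globally, without a cutoff, so that the datum at $r=a$ becomes $u_{\mu,+}''(a)$, which is bounded and absorbed by an auxiliary solution $\varphi$ of the homogeneous equation), the resulting function solving the linearized equation with source $-\tfrac{N-1}{r^2}u_{\mu,+}'$; the uniform bound is then obtained by the blow-up machinery of the nondegeneracy theorem. Your cutoff variant is legitimate and makes both boundary data homogeneous, at the price of the commutator term $-2\chi' u_{\mu,+}''$; note, however, that \eqref{eq:convergence_strong_u_mu} is stated for fixed $(a,b)$, so the claimed bound for $u_{\mu,+}''$ on $\operatorname{supp}\chi'$, uniform in $b\in[B_1,B_2]$ and $\mu$, needs an extra compactness/contradiction argument in the spirit of Lemma \ref{lemma:pohozaev_uniform} rather than a direct citation.

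The step you should not treat as a black box is the uniform estimate $\|L_\mu^{-1}\|_{L^\infty\to L^\infty}\le C$: Theorem \ref{thm:non_degeneracy} is a qualitative statement about the homogeneous problem at fixed $(a,b)$, and upgrading it to a quantitative bound for the inhomogeneous Neumann problem, uniform in $\mu$ and $b$, is precisely the content of the paper's proof of this lemma. Moreover, as phrased your contradiction is too quick: after normalizing $\|w_n\|_{L^\infty}=1$ and rescaling near $b_n$, the conclusion $\tilde w_n\to0$ in $C^1_{loc}(-\infty,0)$ (via Grossi's classification) does not by itself contradict $\|w_n\|_{L^\infty}=1$, because the maximum point of $w_n$ may sit at distance much larger than $1/\mu_n$ from $b_n$. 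The contradiction requires the global Green-function representation (the analogue of \eqref{eq:contradiction_step3}, i.e.\ \eqref{14marse2} in the paper), combined with a gradient bound of type \eqref{14marse1} and the interior smallness of $\mu e^{\mu(u_{\mu,+}-1)}$ as in Lemma \ref{lemma:non_deg_lebesgue}, which yield $w_n(r)=G(r,b_n;a,b_n)\,k\mu_n\int e^{\tilde u_n}\tilde w_n\,dt+o_n(1)$; evaluating this identity at the maximum point (left side equal to $1$) and at $r=b_n$ (left side $=\tilde w_n(0)\to0$) gives the contradiction. With these two points made explicit, your argument closes and coincides in substance with the paper's proof.
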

\begin{proof}
We notice that $\dfrac{\partial u_{\mu,+}}{\partial b}(r)=\dfrac{\partial u_{\mu,+}}{\partial b}(r;a,b)$ exists by Lemma \ref{lemma:C1_dependence} and solves
\begin{equation}\label{eq:u_b_equation}
\begin{cases}
-\left(\dfrac{\partial u_{\mu,+}}{\partial b}\right)^{\prime \prime} - \dfrac{N-1}{r}\left(\dfrac{\partial u_{\mu,+}}{\partial b}\right)^\prime +\dfrac{\partial u_{\mu,+}}{\partial b}=\mu e^{\mu (u_{\mu,+}-1)}\dfrac{\partial u_{\mu,+}}{\partial b}, \quad r\in\ (a,b)\\
\left(\dfrac{\partial u_{\mu,+}}{\partial b}\right)^\prime (a)=0 \\
\left(\dfrac{\partial u_{\mu,+}}{\partial b}\right)^\prime (b)=-u^{\prime \prime}_{\mu,+}(b).
\end{cases}
\end{equation}
We set $f_\mu (r)=f_\mu (r;a,b)=\dfrac{\partial u_{\mu,+}}{\partial b}(r;a,b)+u^\prime_{\mu,+}(r;a,b)$. We have
\begin{equation}\label{eq:f_b_equation}
\begin{cases}
-f_\mu^{\prime \prime} - \dfrac{N-1}{r}f_\mu^\prime +f_\mu=\mu e^{\mu (u_{\mu,+}-1)}f_\mu-\dfrac{N-1}{r^2}u^\prime_{\mu,+}, \quad r\in\ (a,b)\\
f_\mu^\prime (a)=u_{\mu,+}^{\prime \prime}(a) \\
f_\mu^\prime (b)=0.
\end{cases}
\end{equation}
Let $\varphi$ be the unique solution of
\begin{equation}
\begin{cases}
-\varphi^{\prime \prime} - \dfrac{N-1}{r}\varphi^\prime +\varphi =0, \quad r\in\ (a,b)\\
\varphi^\prime (a)=u_{\infty,+}(a) \\
\varphi^\prime (b)=0.
\end{cases}
\end{equation}
Since $u_{\mu,+}^{\prime \prime}(a)=u_{\mu ,+}(a)- e^{\mu (u_{\mu,+}(a)-1)}\rightarrow u_{\infty,+}(a)$ as $\mu \rightarrow \infty$, we have
\begin{equation}
\label{14marse2}
f_\mu (r)=\int_a^b G(r,s;a,b) \left(\mu e^{\mu (u_{\mu,+}(s)-1)}f_\mu(s)-\dfrac{N-1}{s^2}u^\prime_{\mu,+}(s) \right)ds+\varphi (r)+o_\mu (1),
\end{equation}
when $\mu \rightarrow \infty$.

Noticing that $u_{\mu,+}^\prime$ is bounded independently of $\mu$ and $b$, to prove the lemma it suffices to show that 
\begin{equation}\label{eq:tilde_f_contradiction}
\sup_{\mu>1} \sup_{b\in[B_1,B_2]} \left\| f_\mu(.;a,b)\right\|_{L^\infty(B_b\setminus B_a)} \leq C,
\end{equation}
for a constant $C>0$ independent of $\mu$ and $b$.
Suppose by contradiction that there exist two sequences $(\mu_n)_n$ and $(b_n)_n$ such that  $\mu_n \rightarrow \infty$, $b_n \in [B_1,B_2] \rightarrow b$ and $\|f_n\|_{L^\infty(B_{b_n}\setminus B_a)}=\|f_{\mu_n}(.;a,b_n) \|_{L^\infty(B_{b_n}\setminus B_a)}\rightarrow \infty$ when $n \rightarrow \infty$.  

We claim that there exists $C>0$ not depending on $n$ and $r$ such that
\begin{equation}\label{14marse1}
\dfrac{|f_n^\prime (r)|}{\|f_n \|_{L^\infty(B_{b_n}\setminus B_a)}}\leq C\mu_n,\text{ for all } r\in \left[\dfrac{a+b_n}{2},b_n\right].
\end{equation}
Indeed, integrating \eqref{eq:f_b_equation}, we see that, for $r\in  [(a+b_n)/2,b_n]$,
\begin{multline}
\dfrac{|f_n^\prime (r)|}{\|f_n\|_{L^\infty(B_{b_n}\setminus B_a)}}r^{N-1}
\leq \int_r^{b_n} \dfrac{|f_n (t)|}{\|f_n \|_{L^\infty(B_{b_n}\setminus B_a)}}t^{N-1}\,dt 
\\+ \frac{1}{\|f_n\|_{L^\infty(B_{b_n}\setminus B_a)}}
\int_r^{b_n} \left( \mu_n e^{\mu_n (u_{\mu_n,+}(t;a,b_n)-1)} |f_n(t)|
+\dfrac{N-1}{t^2} |u^\prime_{\mu_n,+}(t;a,b_n)| \right) \,dt.  
\end{multline}
Since $u^\prime_{\mu_n,+}(.;a,b_n)$ is $L^\infty$-bounded independently of $n$, we have
$$
\dfrac{|f_n^\prime (r)|}{\|f_n \|_{L^\infty(B_{b_n}\setminus B_a)}}\leq C+ C \mu_n \int_r^{b_n} e^{\mu_n (u_{\mu_n,+}(t;a,b_n)-1)}  \,dt.
$$
Then, the claim \eqref{14marse1} follows from the fact that $ \int_r^{b_n} e^{\mu_n (u_{\mu_n,+}(t;a,b_n)-1)}  \,dt$ is uniformly bounded thanks to Lemma \ref{lemma:u_tilde_mu_convergence}.

Now, we define
$$
\tilde{f}_n (r)= \dfrac{f_n (b_n+ \frac{r}{k \mu_n})}{\|f_n \|_{L^\infty(B_{b_n}\setminus B_a)}},
\quad r\in \left[-k\mu_n (b_n-a),0\right],
$$
where $k$ is defined as in \eqref{eq:k_def}. Then, we have
\begin{equation}
\begin{cases}
-\tilde{f}_n^{\prime \prime} - \dfrac{N-1}{b_n+\frac{r}{k \mu_n} }\frac{1}{k \mu_n} \tilde{f}_n^\prime +(\frac{1}{k \mu_n})^2 \tilde{f}_n=(\frac{1}{k \mu_n})^2\mu_n e^{\mu_n (u_{\mu_n,+}(.;a,b_n)-1)}\tilde{f}_n\\
\quad \quad \quad \quad \hspace{2cm}-\dfrac{N-1}{(b+\frac{r}{k \mu_n})^2}(\frac{1}{k \mu_n})^2 \dfrac{u^\prime_{\mu_n,+}(.;a,b_n)}{\|f_n\|_{L^\infty(B_{b_n}\setminus B_a)}}, \quad r\in\ (- k \mu_n (b_n-a),0)\\
\tilde{f}_n^\prime (0)=0 .
\end{cases}
\end{equation}
Notice that here $\mu_n (b_n-a)\to\infty$ because $b_n\geq B_2>a$.
By \eqref{14marse1}, we see that $|\tilde{f}_n^\prime|\leq \mu_n \frac{C}{k \mu_n} \leq C^\prime$. 
Using the blow-up analysis in Lemma \ref{lemma:u_tilde_mu_convergence}, we deduce that $\tilde{f}_n \rightarrow \tilde{f}_\infty$ in $C^1_{loc}(-\infty ,0)$, where $\tilde{f}_\infty$ satisfies
$$\begin{cases}-\tilde{f}_\infty^{\prime \prime}=e^{\tilde{u}_\infty}\tilde{f}_\infty,\text{ in } (-\infty,0)\\ \tilde{f}_\infty^\prime (0)=0,\ |\tilde{f}_\infty|\leq 1.\end{cases}$$
One can show, using the classification result of \cite{Grossi2006}, that
\begin{equation}\label{eq:f_tilde_limit}
\tilde{f}_n \rightarrow \tilde{f}_\infty\equiv 0 \quad\text{in } C^1_{loc}(-\infty ,0).
\end{equation}
This implies, proceeding as in Step 3 of \cite[Theorem 5.1]{BonheureGrossiNorisTerracini2015}, that \eqref{14marse2} can be rewritten as
$$
\frac{f_n (r)}{\|f_n\|_{L^\infty(B_{b_n}\setminus B_a)}}
= G(r,b_n;a,b_n) k\mu \int^0_{-k\mu_n \frac{b_n-a}{2}}   e^{\tilde{u}_{\mu_n,+}(t;a,b_n)}
\tilde{f}_n(t) \,dt  +o_n (1). 
$$
Let $r_n\in[a,B_2]$ be such that $f_n(r_n)=\|f_n\|_{L^\infty(B_{b_n}\setminus B_a)}$
Then, we obtain that
$$
1
= G(r_n,b_n;a,b_n) k\mu \int^0_{-k\mu_n \frac{b_n-a}{2}}   e^{\tilde{u}_{\mu_n,+}(t;a,b_n)}
\tilde{f}_n(t) \,dt  +o_n (1). 
$$
We obtain a contradiction with the fact that $\tilde{f}_n (b_n)=o_n (1)$ (which can be deduced from \eqref{eq:f_tilde_limit}). This provides \eqref{eq:tilde_f_contradiction} and hence concludes the proof.
\end{proof}

\begin{corollary}
Fix $a\in [0,1)$ and $a<B_1<B_2\leq1$. For every $b\in [B_1,B_2]$ there exists a function $C(b)$ such that
\begin{equation}\label{eq:u_b_convergence}
\dfrac{\partial u_{\mu,+}}{\partial b}(r;a,b)\rightarrow C(b) u_{\infty,+}(r;a,b), \quad r\in(a,b),
\end{equation}
pointwise as $\mu \rightarrow \infty$ (recall the definition of $u_{\infty,+}$ in \eqref{eq:u_infty+def}).
\end{corollary}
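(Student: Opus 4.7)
My plan is to argue that the rescaled first variation $\partial_b u_{\mu,+}$ converges, along subsequences, to a solution of the homogeneous limit problem, which is one-dimensional, hence a multiple of $u_{\infty,+}(\cdot;a,b)$.

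First, I would fix $b\in[B_1,B_2]$ and use Lemma \ref{lemma:u_b_derivative_bound} to obtain a uniform $L^\infty$ bound on $w_\mu:=\partial_b u_{\mu,+}(\cdot;a,b)$. Recalling \eqref{eq:u_b_equation}, $w_\mu$ solves
\begin{equation*}
-w_\mu''-\frac{N-1}{r}w_\mu'+w_\mu = \mu e^{\mu(u_{\mu,+}(\cdot;a,b)-1)}\,w_\mu \quad\text{in }(a,b),
\end{equation*}
with $w_\mu'(a)=0$ and $w_\mu'(b)=-u_{\mu,+}''(b;a,b)$. For every compact subinterval $K\subset[a,b)$, the $C^\infty$-convergence \eqref{eq:convergence_strong_u_mu} together with the fact $u_{\infty,+}(r;a,b)<1$ for $r<b$ gives $\mu e^{\mu(u_{\mu,+}-1)}\to 0$ uniformly on $K$. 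Hence by standard interior elliptic regularity the $L^\infty$-bound on $w_\mu$ upgrades to a uniform $C^{2,\alpha}(K)$-bound.

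Next, by Ascoli--Arzel\`a and a diagonal extraction, along a subsequence $\mu_n\to\infty$ we obtain $w_{\mu_n}\to w$ in $C^{1}_{\mathrm{loc}}([a,b))$, and passing to the limit in the equation shows that $w$ solves the homogeneous problem
\begin{equation*}
\mathcal{L} w=-w''-\frac{N-1}{r}w'+w=0 \quad\text{in }(a,b), \qquad w'(a)=0,
\end{equation*}
(and $w$ is bounded on $[a,b)$). By elementary Sturm theory the space of bounded solutions of $\mathcal{L} w=0$ on $(a,b)$ satisfying $w'(a)=0$ is one-dimensional, and it is generated by $u_{\infty,+}(\cdot;a,b)=G(\cdot,b;a,b)$, which precisely solves this homogeneous problem (this is clear from \eqref{eq:Green_def} by letting $s\to b$). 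Therefore $w=C(b)\,u_{\infty,+}(\cdot;a,b)$ for some constant $C(b)\in\mathbb{R}$.

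Finally, since every convergent subsequence of $w_\mu$ has the same limit $C(b)u_{\infty,+}(\cdot;a,b)$ (the value of $C(b)$ being determined, for instance, by evaluating the limit at any point $r_0\in(a,b)$ where $u_{\infty,+}(r_0;a,b)\neq 0$), the whole family $w_\mu$ converges pointwise (in fact in $C^{1}_{\mathrm{loc}}([a,b))$) to $C(b)u_{\infty,+}(\cdot;a,b)$, giving \eqref{eq:u_b_convergence}. The only mildly delicate point is to make sure that the limit is unambiguously defined: the $L^\infty$-bound of Lemma \ref{lemma:u_b_derivative_bound} guarantees $C(b)$ is finite, while the one-dimensionality of the limit kernel rules out any other accumulation point, so no renormalisation is needed.
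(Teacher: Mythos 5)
Your compactness argument is the right one and is essentially what the corollary rests on: the uniform bound of Lemma \ref{lemma:u_b_derivative_bound}, the equation \eqref{eq:u_b_equation}, the fact that $\mu e^{\mu(u_{\mu,+}-1)}\to0$ locally uniformly on $[a,b)$ because $u_{\infty,+}<1$ there (using \eqref{eq:convergence_strong_u_mu}), and the observation that any limit solves $\mathcal{L}w=0$ with $w'(a)=0$ (respectively is bounded at $0$ when $a=0$), a one-dimensional space spanned by $u_{\infty,+}(\cdot;a,b)=G(\cdot,b;a,b)$. (Two small points there: to keep the Neumann condition at $r=a$ in the limit you need regularity up to $r=a$, so invoke Schauder estimates up to the boundary, or simply integrate the radial equation from $a$, rather than ``interior'' regularity; for $a=0$ the $L^\infty$ bound alone already excludes the singular solution.)

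The genuine gap is in your last step. The one-dimensionality of the limiting kernel only tells you that \emph{every} accumulation point of $\partial_b u_{\mu,+}(\cdot;a,b)$ is of the form $c\,u_{\infty,+}(\cdot;a,b)$ for \emph{some} constant $c$; it does not exclude that two different subsequences $\mu_n\to\infty$ produce two different constants, and in that case the full family does not converge and no single $C(b)$ exists. Your proposed remedy -- ``the value of $C(b)$ is determined by evaluating the limit at a point $r_0$ with $u_{\infty,+}(r_0)\neq0$'' -- is circular: evaluating at $r_0$ identifies the constant attached to one convergent subsequence, but says nothing about whether another subsequence gives the same value. To close the gap you must identify the constant by a subsequence-independent computation. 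This is exactly what happens in Lemma \ref{BGNTL7.4}: multiplying \eqref{eq:u_b_equation} against $u_{\mu,+}'$ and $ru_{\mu,+}'$, integrating, and using Lemma \ref{lemma:u_infty_integrals} together with \eqref{eq:pohozaev} yields the identities \eqref{eq:w_integrated} and \eqref{eq:z_integrated2}, whose limit forces $C(b)=-u_{\infty,+}'(b;a,b)$ (see \eqref{eq:C_b}) along every subsequence, and only then does the convergence of the whole family, i.e.\ the statement as written, follow. Alternatively, you could state your conclusion ``up to subsequences'' and postpone the uniqueness of $C(b)$ to that computation; as written, the claim of full convergence is not justified by your argument.
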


We recall the following result.

\begin{lemma}[{\cite[Lemma 7.3]{BonheureGrossiNorisTerracini2015}}]\label{lemma:u_infty_integrals}
We have
\begin{equation}\label{eq:u_infty_equality1}
(N-1)\int_a^b u_{\infty,+}' u_{\infty,+} r^{N-3} \,dr=b^{N-1} \left( u_{\infty,+}''(b)-u_{\infty,+}'(b)^2 \right) - a^{N-1} u_{\infty,+}(a)^2,
\end{equation}
\begin{equation}\label{eq:u_infty_equality2}
2\int_a^b u_{\infty,+}^2 r^{N-1} \,dr = b^{N-1} \left( u_{\infty,+}'(b)+b u_{\infty,+}''(b) \right) -b^N u_{\infty,+}'(b)^2 -a^{N} u_{\infty,+}(a)^2.
\end{equation}
\end{lemma}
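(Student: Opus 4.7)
The plan is to derive both identities by direct manipulation of the radial ODE satisfied by $u:=u_{\infty,+}(\cdot;a,b)$, namely
\[
-u''-\frac{N-1}{r}u'+u=0\text{ in }(a,b),\qquad u'(a)=0,\quad u(b)=1,
\]
where for $a=0$ the Neumann condition is replaced by radial regularity (so that the boundary terms $a^{N-1}u(a)^2$ and $a^N u(a)^2$ vanish automatically), and $u(b)=1$ is furnished by Theorem \ref{thm:asymptotic_increasing_sol}. I first record two auxiliary integral identities and a pointwise boundary relation that will feed into both proofs. Multiplying the ODE by $u\,r^{N-1}$ and integrating by parts gives the $H^1$-identity
\[
\int_a^b r^{N-1}\bigl((u')^2+u^2\bigr)\,dr = b^{N-1}u'(b).
\]
Multiplying instead by $u'\,r^{N-1}$ and integrating by parts gives
\[
(N-1)\int_a^b r^{N-2}\bigl((u')^2+u^2\bigr)\,dr = b^{N-1}\bigl(1-u'(b)^2\bigr)-a^{N-1}u(a)^2.
\]
Finally, evaluating the ODE at $r=b$ and using $u(b)=1$ yields $b^{N-1}u''(b)=b^{N-1}-(N-1)b^{N-2}u'(b)$.

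For \eqref{eq:u_infty_equality1}, I use the ODE to write $(N-1)u'/r=u-u''$, so that
\[
(N-1)uu'r^{N-3}=(u^2-uu'')r^{N-2}.
\]
Integrating over $(a,b)$ and integrating the term $\int_a^b uu''r^{N-2}\,dr$ by parts (its boundary contribution reduces to $b^{N-2}u'(b)$ thanks to $u'(a)=0$ and $u(b)=1$), the $(N-2)\int uu'r^{N-3}\,dr$ generated by the by-parts combines with the $(N-1)$ on the left to leave a single such integral, giving
\[
(N-1)\int_a^b uu'r^{N-3}\,dr = (N-1)\int_a^b r^{N-2}\bigl((u')^2+u^2\bigr)\,dr-(N-1)b^{N-2}u'(b).
\]
Substituting the second auxiliary identity and rewriting $b^{N-1}-(N-1)b^{N-2}u'(b)=b^{N-1}u''(b)$ produces exactly \eqref{eq:u_infty_equality1}.

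For \eqref{eq:u_infty_equality2}, I apply the Pohozaev multiplier $r^N u'$ (the weighted radial analogue of $x\cdot\nabla u$); integrating by parts yields
\[
b^N u'(b)^2+(N-2)\int_a^b r^{N-1}(u')^2\,dr+N\int_a^b r^{N-1}u^2\,dr = b^N-a^N u(a)^2.
\]
Using the first auxiliary identity to eliminate $\int_a^b r^{N-1}(u')^2\,dr$ gives
\[
2\int_a^b r^{N-1}u^2\,dr = b^N-b^N u'(b)^2+(2-N)b^{N-1}u'(b)-a^N u(a)^2,
\]
and the pointwise boundary relation rewrites $b^N+(2-N)b^{N-1}u'(b)$ as $b^{N-1}\bigl(u'(b)+b u''(b)\bigr)$, yielding \eqref{eq:u_infty_equality2}. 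No substantive obstacle is expected: the entire argument is elementary integration by parts. The only care required is tracking the boundary contributions accurately, using $u'(a)=0$ (or radial regularity when $a=0$) and $u(b)=1$ from Theorem \ref{thm:asymptotic_increasing_sol} to simplify the evaluations at the endpoints.
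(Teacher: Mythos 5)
Your proof is correct. Note that this lemma is not proved in the present paper at all: it is recalled verbatim from \cite[Lemma 7.3]{BonheureGrossiNorisTerracini2015}, so what you have written is a self-contained substitute for a proof the paper omits rather than an alternative to one it gives. I checked the computations: the two auxiliary identities (obtained by testing the equation with $u\,r^{N-1}$ and $u'\,r^{N-1}$), the pointwise relation $b^{N-1}u''(b)=b^{N-1}-(N-1)b^{N-2}u'(b)$ coming from evaluating the homogeneous equation at $r=b$ with $u(b)=1$, and the Pohozaev multiplier $r^{N}u'$ combine exactly as you state to yield \eqref{eq:u_infty_equality1} and \eqref{eq:u_infty_equality2}; the boundary contributions at $r=a$ are handled correctly both for $a>0$ (Neumann condition) and for $a=0$ (radial regularity together with the vanishing of the factors $a^{N-1}$ and $a^{N}$), and a direct check against the explicit solution $\sinh r/(r\sinh b)$ in dimension $N=3$ confirms both identities. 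The one point you could make more explicit is why your starting data are legitimate: $u_{\infty,+}=G(\cdot,b;a,b)$, normalized so that $u_{\infty,+}(b)=1$ as established in Section \ref{subsec:asymptotic_increasing}, solves the homogeneous equation ${\mathcal L}u=0$ on all of $(a,b)$ (the Dirac mass sits at the endpoint $s=b$) and is $C^{2}$ up to $r=b$, so that $u_{\infty,+}''(b)$ in the statement is the one-sided second derivative and your use of the ODE at $r=b$ is justified; likewise, when $N=2$ and $a=0$ the integral in \eqref{eq:u_infty_equality1} carries the weight $r^{-1}$ but is finite because $u_{\infty,+}'(r)=O(r)$ near the origin. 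With these remarks in place, your argument is complete and is, in spirit, the same elementary integration-by-parts derivation as in the cited reference.
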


\begin{lemma}\label{BGNTL7.4}
For $0\leq a<b\leq1$ we have
\begin{equation}\label{eq:mu_u_b}
\lim_{\mu\to\infty} \mu\frac{\partial u_{\mu,+}}{\partial b}(b;a,b)=
2\dfrac{u_{\infty,+}^{\prime \prime}(b;a,b)- (u_{\infty,+}^\prime (b;a,b))^2}{u^\prime_{\infty,+}(b;a,b)}.
\end{equation}
\end{lemma}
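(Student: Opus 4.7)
The plan is to derive a weighted integral identity relating $w_\mu := \partial u_{\mu,+}/\partial b$ and $u'_{\mu,+}$, and then pass to the limit $\mu\to\infty$ isolating the boundary contribution at $r=b$. Throughout set $H := u'_{\infty,+}(b;a,b)$. Differentiating $\mathcal{L}u_{\mu,+} = e^{\mu(u_{\mu,+}-1)}$ in $r$ gives $\mathcal{L}(u'_{\mu,+}) + \frac{N-1}{r^2}u'_{\mu,+} = \mu e^{\mu(u_{\mu,+}-1)}u'_{\mu,+}$. Multiplying this by $w_\mu$, equation~\eqref{eq:u_b_equation} by $u'_{\mu,+}$, and subtracting cancels the exponential term; multiplication by $r^{N-1}$ then produces the exact-derivative identity $\bigl[r^{N-1}(u''_{\mu,+}w_\mu - u'_{\mu,+}w'_\mu)\bigr]' = (N-1)r^{N-3}u'_{\mu,+}w_\mu$. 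Integrating on $(a,b)$ and using the Neumann conditions $u'_{\mu,+}(a)=u'_{\mu,+}(b)=w'_\mu(a)=0$ together with $w'_\mu(b)=-u''_{\mu,+}(b)$ from \eqref{eq:u_b_equation} yields
\begin{equation*}
b^{N-1}u''_{\mu,+}(b)w_\mu(b) - a^{N-1}u''_{\mu,+}(a)w_\mu(a) = (N-1)\int_a^b r^{N-3}u'_{\mu,+}w_\mu\,dr,
\end{equation*}
which is the identity to be analyzed.

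To pin down the constant $C(b)$ in \eqref{eq:u_b_convergence}, I would observe that $u_{\infty,+}(r;a,b) = G(r,b;a,b) = \phi(r)/\phi(b)$, where $\phi$ is the (normalized) solution of $\mathcal{L}\phi=0$ satisfying $\phi'(a)=0$, whence a direct calculation gives $\partial_b u_{\infty,+}(r;a,b) = -H\, u_{\infty,+}(r;a,b)$. For $r_0 < b_0 < b$, the fundamental theorem of calculus combined with Lemma~\ref{lemma:C1_dependence} gives $u_{\mu,+}(r_0;a,b) - u_{\mu,+}(r_0;a,b_0) = \int_{b_0}^b w_\mu(r_0;a,\beta)\,d\beta$. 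Passing to the limit using Theorem~\ref{thm:asymptotic_increasing_sol}, \eqref{eq:u_b_convergence}, and dominated convergence (Lemma~\ref{lemma:u_b_derivative_bound}), and then differentiating in $b$, produces $\partial_b u_{\infty,+}(r_0;a,b) = C(b)u_{\infty,+}(r_0;a,b)$; comparing with the direct formula gives $C(b) = -H$.

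With $C(b)$ in hand, I would pass to the limit in the identity. Its right-hand side is uniformly bounded by Lemmas~\ref{lemma:a_priori_bounds} and~\ref{lemma:u_b_derivative_bound}; the boundary term at $a$ is bounded because $u_{\mu,+}(a)<1$ makes $e^{\mu(u_{\mu,+}(a)-1)}\to 0$ exponentially, hence $u''_{\mu,+}(a)\to u_{\infty,+}(a)$. Writing $u''_{\mu,+}(b) = u_{\mu,+}(b) - e^{\mu(u_{\mu,+}(b)-1)}$ and invoking Lemma~\ref{lemma:pohozaev} (which gives $e^{\mu(u_{\mu,+}(b)-1)}/\mu\to H^2/2>0$) forces $|\mu w_\mu(b)| = O(1)$. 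Setting $L := \lim_\mu \mu w_\mu(b)$ along a subsequence, one obtains $u''_{\mu,+}(b)w_\mu(b)\to -H^2 L/2$ and $u''_{\mu,+}(a)w_\mu(a)\to -H u_{\infty,+}(a)^2$. The right-hand integral tends by dominated convergence to $-(N-1)H \int_a^b r^{N-3}u'_{\infty,+}u_{\infty,+}\,dr$, which by \eqref{eq:u_infty_equality1} equals $-H\bigl[b^{N-1}(u''_{\infty,+}(b) - H^2) - a^{N-1}u_{\infty,+}(a)^2\bigr]$. After cancellation of the $a$-contributions, the identity reduces to $\frac{1}{2}b^{N-1}H^2 L = H b^{N-1}(u''_{\infty,+}(b) - H^2)$, giving the claimed value of $L$; uniqueness of the limit ensures convergence of the whole sequence.

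The main obstacle is the delicate cancellation in $u''_{\mu,+}(b)w_\mu(b)$: the factor $u''_{\mu,+}(b)$ diverges like $-\mu H^2/2$ while $w_\mu(b)\to 0$, so extracting a finite limit requires the precise asymptotics of Lemma~\ref{lemma:pohozaev}. A related subtlety is that $w_\mu\to -H u_{\infty,+}$ only pointwise on $(a,b)$ and breaks at $r=b$ (a boundary layer controlled by Lemma~\ref{lemma:u_b_derivative_bound}), which is precisely why the identity has to be organized so that $w_\mu(b)$ is isolated as a boundary term and not absorbed into an interior integral.
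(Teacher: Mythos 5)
Your derivation of the identity \eqref{eq:w_integrated} (your multiplier computation with $u'_{\mu,+}$ and $w_\mu=\partial u_{\mu,+}/\partial b$) coincides with the paper's first step, and your limit analysis of that identity --- boundedness of $\mu w_\mu(b)$ via Lemma \ref{lemma:pohozaev}, the cancellation of the $a$-terms through \eqref{eq:u_infty_equality1}, and the resulting relation $-\tfrac{H^2}{2}L=C(b)\bigl(u_{\infty,+}''(b)-H^2\bigr)$ --- is correct. The genuine gap is in your identification $C(b)=-H$ at the \emph{specific} $b$ under consideration. Your argument writes $u_{\mu,+}(r_0;a,b)-u_{\mu,+}(r_0;a,b_0)=\int_{b_0}^b w_\mu(r_0;a,\beta)\,d\beta$ (legitimate by Lemma \ref{lemma:C1_dependence}), passes to the limit by dominated convergence using Lemma \ref{lemma:u_b_derivative_bound} and \eqref{eq:u_b_convergence}, and then ``differentiates in $b$''. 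But the integrand of the limiting identity is $\beta\mapsto C(\beta)u_{\infty,+}(r_0;a,\beta)$, about which you know neither continuity nor any regularity: pointwise convergence of the derivatives $w_\mu(r_0;a,\cdot)$ plus uniform convergence of $u_{\mu,+}(r_0;a,\cdot)$ does \emph{not} allow you to exchange $\lim_\mu$ with $\partial_b$. Differentiating the integral identity only yields $C(\beta)=-u'_{\infty,+}(\beta;a,\beta)$ for \emph{almost every} $\beta$ (Lebesgue points), whereas your limit passage in \eqref{eq:w_integrated} uses the value $C(b)$ at the given endpoint $b$ twice (in the boundary term $u''_{\mu,+}(a)w_\mu(a)\to C(b)u_{\infty,+}(a)^2$ and in the interior integral). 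Since Lemma \ref{BGNTL7.4} is needed for every $b$ (it feeds into the uniform statement of Lemma \ref{BGNTL7.6} and Theorem \ref{thm:C1convergence}), an almost-everywhere identification is not enough, and with only one identity in hand you are left with a single equation in the two unknowns $C(b)$ and $L$.

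This is exactly the point where the paper proceeds differently: it derives a \emph{second} identity by the Pohozaev-type multiplier $z=ru'_{\mu,+}$ (equations \eqref{eq:z}--\eqref{eq:z_integrated2}), passes to the limit in both identities using \eqref{eq:u_infty_equality1}--\eqref{eq:u_infty_equality2}, and solves the resulting $2\times2$ linear system simultaneously for $C(b)$ and $\lim_\mu\mu\,\partial_b u_{\mu,+}(b;a,b)$; the value $C(b)=-u'_{\infty,+}(b)$ in \eqref{eq:C_b} is an \emph{output} of this system, not an input. Your observation that $\partial_b u_{\infty,+}=-u'_{\infty,+}(b)\,u_{\infty,+}$ follows from the explicit $b$-dependence of the normalized limit profile is correct and is a nice consistency check, but as a replacement for the second identity it needs an additional ingredient (continuity of $b\mapsto C(b)$, or a uniform-in-$b$ version of \eqref{eq:u_b_convergence}) that is not available at this stage; either supply such an argument or add the second multiplier identity, in which case you recover the paper's proof.
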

\begin{proof}
Let $w:=u_{\mu,+}^\prime$, then
\begin{equation}\label{eq:w}
\begin{cases}
-w^{\prime \prime}-\dfrac{N-1}{r}w^\prime +w = \mu e^{\mu (u_{\mu,+}-1)}w
-\dfrac{N-1}{r^2}w, \quad r\in\ (a,b)\\
w(a)=w(b)=0\\
w^\prime (a)=u_{\mu,+}^{\prime \prime}(a), w^\prime (b)= u_{\mu,+}^{\prime \prime}(b).
\end{cases}
\end{equation}
We multiply equation \eqref{eq:w} by $r^{N-1}\frac{\partial u_{\mu,+}}{\partial b}$ and equation \eqref{eq:u_b_equation} by $r^{N-1}w$; we integrate in $(a,b)$ and subtract the two, to get
\begin{equation}\label{eq:w_integrated}
b^{N-1}u_{\mu,+}^{\prime \prime}(b) \frac{\partial u_{\mu,+}}{\partial b} (b) -a^{N-1} u_{\mu,+}^{\prime \prime}(a) \frac{\partial u_{\mu,+}}{\partial b} (a)= (N-1)\int_a^b u_{\mu,+}^\prime \frac{\partial u_{\mu,+}}{\partial b} r^{N-3}\,dr.
\end{equation}
Let $z:=r u_{\mu,+}^\prime$, then
\begin{equation}\label{eq:z}
\begin{cases}
-z^{\prime \prime}-\dfrac{N-1}{r}z^\prime +z = e^{\mu (u_{\mu,+}-1)}(2+\mu z)-2u_{\mu,+}, \quad r\in\ (a,b)\\
z(a)=z(b)=0\\
z^\prime (a)=a u_{\mu,+}^{\prime \prime}(a), z^\prime (b)=b u_{\mu,+}^{\prime \prime}(b).
\end{cases}
\end{equation}
Proceeding as above we obtain
\begin{equation}\label{eq:z_integrated}
b^{N}u_{\mu,+}^{\prime \prime}(b) \frac{\partial u_{\mu,+}}{\partial b} (b) -a^{N} u_{\mu,+}^{\prime \prime}(a) \frac{\partial u_{\mu,+}}{\partial b} (a)= 2 \int_a^b  \frac{\partial u_{\mu,+}}{\partial b} (u_{\mu,+}-e^{\mu(u_{\mu,+}-1)} )r^{N-1}\,dr.
\end{equation}
Integrating \eqref{eq:u_b_equation} we deduce
\begin{equation}
\int_a^b e^{\mu(u_{\mu,+}-1)} \frac{\partial u_{\mu,+}}{\partial b} r^{N-1} \,dr
=\frac{1}{\mu} \left( \int_a^b \frac{\partial u_{\mu,+}}{\partial b} r^{N-1} \,dr + b^{N-1} u_{\mu,+}''(b) \right),
\end{equation}
hence we can rewrite \eqref{eq:z_integrated} as follows
\begin{multline}\label{eq:z_integrated2}
b^{N}u_{\mu,+}^{\prime \prime}(b) \frac{\partial u_{\mu,+}}{\partial b} (b) -a^{N} u_{\mu,+}^{\prime \prime}(a) \frac{\partial u_{\mu,+}}{\partial b} (a)+\frac{2}{\mu} b^{N-1} u_{\mu,+}''(b) \\
= 2 \int_a^b  \frac{\partial u_{\mu,+}}{\partial b} \left(u_{\mu,+}-\frac{1}{\mu}\right)r^{N-1}\,dr.
\end{multline}
We can pass to the limit in \eqref{eq:w_integrated} and in \eqref{eq:z_integrated2} by means of the estimates \eqref{eq:convergence_strong_u_mu}, \eqref{eq:pohozaev} and  \eqref{eq:u_b_convergence}. We combine the result with Lemma \ref{lemma:u_infty_integrals}, to obtain
\begin{equation*}
-\mu \left( \frac{u_{\infty,+}'(b)^2}{2}+o(1) \right) \frac{\partial u_{\mu,+}}{\partial b}(b) 
=C(b) \left( u_{\infty,+}''(b)-u_{\infty,+}'(b)^2 \right) +o(1)
\end{equation*}
\begin{multline*}
-\mu b \left( \frac{u_{\infty,+}'(b)^2}{2}+o(1) \right) \frac{\partial u_{\mu,+}}{\partial b}(b) - u_{\infty,+}'(b)^2 \\
=C(b) \left( u_{\infty,+}'(b)+b u_{\infty,+}''(b) -b u_{\infty,+}'(b)^2\right)  +o(1),
\end{multline*}
where we also used the fact that $u_{\infty,+}''(a)=u_{\infty,+}(a)$ and 
\begin{equation}
\frac{1}{\mu} \int_a^b u_{\infty,+} r^{N-1}\,dr =o(1).
\end{equation}
From the previous system we get
\begin{equation}\label{eq:C_b}
C(b)=-u_{\infty,+}'(b)
\end{equation}
and \eqref{eq:mu_u_b}.
\end{proof}



\begin{lemma}\label{BGNTL7.6}
Fix $a\in [0,1)$ and $a<B_1<B_2\leq1$. It holds
\begin{equation}
\lim_{\mu\to\infty} \sup_{b\in [B_1,B_2]} \left[ \mu\frac{\partial u_{\mu,+}}{\partial b}(b;a,b)-
2\dfrac{u_{\infty,+}^{\prime \prime}(b;a,b)- (u_{\infty,+}^\prime (b;a,b))^2}{u^\prime_{\infty,+}(b;a,b)} \right] =0
\end{equation}
\end{lemma}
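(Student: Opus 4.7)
The plan is to argue by contradiction, in the same spirit as the derivation of Lemma \ref{lemma:pohozaev_uniform} from Lemma \ref{lemma:pohozaev}. Suppose the conclusion fails; then there exist sequences $\mu_n \to +\infty$ and $b_n \to b_* \in [B_1,B_2]$ along which
\[
\left| \mu_n \frac{\partial u_{\mu_n,+}}{\partial b}(b_n;a,b_n) - 2\,\frac{u_{\infty,+}''(b_n;a,b_n) - (u_{\infty,+}'(b_n;a,b_n))^2}{u_{\infty,+}'(b_n;a,b_n)} \right| \geq \delta > 0.
\]
Since the limit function $u_{\infty,+}(\cdot;a,b) = G(\cdot,b;a,b)$ depends smoothly on $b$ (up to $C^2$ at $r=b$, by standard dependence of the Green function on its parameters), the second term above converges to the corresponding expression evaluated at $b_*$. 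Therefore it suffices to prove that $\mu_n \,\partial_b u_{\mu_n,+}(b_n;a,b_n)$ converges to the limit predicted by the pointwise formula \eqref{eq:mu_u_b} with $b=b_*$.

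The next step is to replay the integration-by-parts identities \eqref{eq:w_integrated} and \eqref{eq:z_integrated2} along the sequence $(\mu_n,b_n)$, which hold by construction, and then pass to the limit. To do so I would invoke: (i) the uniform pointwise Pohozaev estimate of Lemma \ref{lemma:pohozaev_uniform}, giving $e^{\mu_n(u_{\mu_n,+}(b_n)-1)}/\mu_n \to (u_{\infty,+}'(b_*;a,b_*))^2/2$; (ii) a uniform analogue of \eqref{eq:convergence_strong_u_mu}, namely $u_{\mu_n,+}(\cdot;a,b_n) \to u_{\infty,+}(\cdot;a,b_*)$ in $C^\infty_{loc}([a,b_*))$, which follows by combining the uniform $C^1$-bound of Lemma \ref{lemma:a_priori_bounds} with interior elliptic regularity along the converging subsequence; (iii) a uniform version of \eqref{eq:u_b_convergence}, $\partial_b u_{\mu_n,+}(\cdot;a,b_n) \to C(b_*)\,u_{\infty,+}(\cdot;a,b_*)$ in $C^0_{loc}(a,b_*)$. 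Item (iii) is itself established by a contradiction argument: starting from the uniform $L^\infty$-bound of Lemma \ref{lemma:u_b_derivative_bound}, any $C^0_{loc}$-limit of $\partial_b u_{\mu_n,+}(\cdot;a,b_n)$ must, away from the concentration point $b_*$, solve the homogeneous linearized problem \eqref{eq:v_mu} at $b_*$; the identification $C(b_*) = -u_{\infty,+}'(b_*;a,b_*)$ is then forced by the same linear algebra that produced \eqref{eq:C_b}.

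Once (i)--(iii) are in place, passing to the limit in \eqref{eq:w_integrated} and \eqref{eq:z_integrated2} and combining with the two identities of Lemma \ref{lemma:u_infty_integrals} at $b=b_*$ reproduces exactly the $2\times 2$ linear system solved at the end of Lemma \ref{BGNTL7.4}, which uniquely determines the limit of $\mu_n\,\partial_b u_{\mu_n,+}(b_n;a,b_n)$ to be $2(u_{\infty,+}''(b_*;a,b_*)-u_{\infty,+}'(b_*;a,b_*)^2)/u_{\infty,+}'(b_*;a,b_*)$, contradicting the standing assumption.

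I expect the main obstacle to be item (iii), and in particular the control of $\partial_b u_{\mu_n,+}$ near the moving endpoint $r=b_n$ where the solution concentrates. The argument there should proceed exactly as in the $\widetilde f_n$-analysis inside the proof of Lemma \ref{lemma:u_b_derivative_bound}: rescaling $r = b_n + t/(k\mu_n)$ and using Lemma \ref{lemma:u_tilde_mu_convergence} together with the classification result from \cite{Grossi2006} forces the rescaled $\partial_b u$ to vanish at the boundary, which combined with the bound \eqref{14marse1}-type estimate yields the desired uniform convergence. The point that makes this compatible with the outer variable $b\in[B_1,B_2]$ is that the blow-up constant $k = u_{\infty,+}'(b;a,b)/\sqrt 2$ depends continuously on $b$, so all rescaled estimates are uniform on the compact parameter interval.
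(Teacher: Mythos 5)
Your proposal is correct and follows essentially the same route as the paper: the paper's proof is exactly the contradiction argument along sequences $\mu_n\to\infty$, $b_n\to b_*$, invoking the $b$-uniformity of Lemma \ref{lemma:u_b_derivative_bound} (together with Lemma \ref{lemma:pohozaev_uniform}) to repeat the proof of Lemma \ref{BGNTL7.4} with $\mu=\mu_n$, $b=b_n$. Your write-up merely spells out the details the paper leaves implicit (the uniform convergence of $u_{\mu_n,+}(\cdot;a,b_n)$, of $\partial_b u_{\mu_n,+}$, and the rescaling analysis near the moving endpoint), and these details are handled correctly.
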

\begin{proof}
By contradiction, suppose that there exist two sequences $\mu_n \rightarrow +\infty$ and $b_n\rightarrow b \in [B_1,B_2]$ such that
\begin{equation}\label{eq:contradiction_BGNTL7.6}
\mu_n \dfrac{\partial u_{\mu_n,+}}{\partial b}(b_n;a,b_n)- 2 \dfrac{u_{\infty,+}^{\prime \prime}(b) - (u_{\infty,+}^\prime (b))^2}{u_{\infty,+}^\prime (b)} \rightarrow C >0.
\end{equation}
Since the bound in Lemma \ref{lemma:u_b_derivative_bound} is uniform in $b$, we can repeat the proof of Lemma \ref{BGNTL7.4} with $\mu=\mu_n$ and $b=b_n$ to obtain
$$
\mu_n \dfrac{\partial u_{\mu_n,+}}{\partial b}(b_n;a,b_n)- 2 \dfrac{u_{\infty,+}^{\prime \prime}(b) - (u_{\infty,+}^\prime (b))^2}{u_{\infty,+}^\prime (b)} \rightarrow 0,
$$
which contradicts \eqref{eq:contradiction_BGNTL7.6}.
\end{proof}

For the decreasing solution, in the same fashion, we can prove the following.

\begin{lemma}\label{lemma:uniform_decreasing}
Fix $b\in (0,1]$ and $0<A_1<A_2< b$. We have
\[
\lim_{\mu\to\infty} \sup_{a\in [A_1,A_2]} \left[ \frac{e^{\mu(u_{\mu,-}(a;a,b)-1)}}{\mu}
-\frac{(u_{\infty,-}'(a;a,b))^2}{2}\right]=0,
\]
and
\[
\lim_{\mu\to\infty} \sup_{a\in [A_1,A_2]} \left[  \mu \dfrac{\partial u_{\mu,-}}{\partial a}(a;a,b) -
2 \dfrac{u_{\infty,-}^{\prime \prime}(a) - (u_{\infty,-}^\prime (a))^2}{u_{\infty,-}^\prime (a)}\right] =0.
\]
\end{lemma}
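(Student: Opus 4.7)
The plan is to establish both limits by contradiction-plus-subsequence arguments that mirror, step by step, the proofs of Lemma \ref{lemma:pohozaev_uniform} and Lemma \ref{BGNTL7.6}, with the role of the concentration point $b$ replaced by the inner boundary $a$. The setting is actually a little cleaner than for $u_{\mu,+}$ because, in the annulus $B_b\setminus B_{A_1}$ with $A_1>0$, the uniform $C^1$-bound of Lemma \ref{lemma:a_priori_bounds_annulus} is available \emph{independently} of monotonicity and of $a\in[A_1,A_2]$.

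For the first limit, suppose by contradiction that there exist $\mu_n\to\infty$ and $a_n\to a_*\in[A_1,A_2]$ for which the quantity in brackets stays bounded away from $0$. Since $a\mapsto u_{\infty,-}'(a;a,b)$ is smooth (by the Green-function representation of $u_{\infty,-}$), one may replace $u_{\infty,-}'(a_n;a_n,b)$ by $u_{\infty,-}'(a_*;a_*,b)$ up to $o(1)$. Extend $u_{\mu_n,-}(\cdot;a_n,b)$ trivially to $[A_1,b]$; by Lemma \ref{lemma:a_priori_bounds_annulus} these extensions are uniformly bounded in $C^1$, hence converge in $C^{0,\gamma}$ and weakly in $H^1$ to $u_{\infty,-}(\cdot;a_*,b)$, and then strongly in $H^1(B_b\setminus B_{a_*})$ by multiplying the equation satisfied by $u_{\mu_n,-}$ on compacts against the difference with the limit. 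One can then pass to the limit in the Pohozaev identity on $B_b\setminus B_{a_n}$ exactly as in the proof of Lemma \ref{lemma:pohozaev}, obtaining the pointwise Pohozaev identity at $a_*$ and contradicting the assumption.

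For the second limit, the plan is again contradiction along sequences $\mu_n\to\infty$, $a_n\to a_*$, and the main preliminary step is a uniform bound
\[
\sup_{\mu>1,\,a\in[A_1,A_2]}\left\|\tfrac{\partial u_{\mu,-}}{\partial a}(\cdot;a,b)\right\|_{L^\infty(B_b\setminus B_a)}\leq C,
\]
which is the exact analogue of Lemma \ref{lemma:u_b_derivative_bound} at the inner boundary. I would prove it by setting $f_\mu(r):=\partial_a u_{\mu,-}(r;a,b)+u_{\mu,-}'(r;a,b)$, so that $f_\mu$ satisfies a linearized equation with Neumann datum $f_\mu'(b)=0$ and $f_\mu'(a)=u_{\mu,-}''(a)=u_{\mu,-}(a)-e^{\mu(u_{\mu,-}(a)-1)}\to u_{\infty,-}(a)$; assuming $\|f_n\|_{L^\infty}\to\infty$ and rescaling by $\tilde f_n(r):=f_n(a_n+r/(k\mu_n))/\|f_n\|_{L^\infty}$ with $k=u_{\infty,-}'(a_*;a_*,b)/\sqrt{2}$, the obvious variant of the blow-up Lemma \ref{lemma:u_tilde_mu_convergence} centered at $r=a$ together with the classification of \cite{Grossi2006} forces $\tilde f_n\to0$, and the Green-representation step leading to \eqref{14marse2} yields a contradiction. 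With this uniform bound, I repeat the integration-by-parts chain of Lemma \ref{BGNTL7.4}: multiply the equations for $w=u_{\mu,-}'$ and $z=ru_{\mu,-}'$ by $r^{N-1}\partial_a u_{\mu,-}$, subtract the linearized equation \eqref{eq:u_b_equation} multiplied by $r^{N-1}w$ and $r^{N-1}z$ respectively, and integrate on $(a,b)$. Passing to the limit using the strong convergence $u_{\mu_n,-}(\cdot;a_n,b)\to u_{\infty,-}(\cdot;a_*,b)$ on compacts, the first part of the present lemma (applied at $a_*$), and the integral identities of Lemma \ref{lemma:u_infty_integrals} adapted to the decreasing case, gives the pointwise limit at $a_*$, contradicting the assumption.

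The main obstacle is the uniform bound on $\partial u_{\mu,-}/\partial a$. One must check that, after rescaling at the inner boundary, the boundary contribution at $r=b$ (which now plays the role of the ``far'' endpoint) produces only $o(1)$ terms in the Green representation, and that the blow-up limit $\tilde u_\infty$ is the same standard $\mathrm{sech}$-profile as in Lemma \ref{lemma:u_tilde_mu_convergence} up to the reflection $r\mapsto -r$. Once this is verified, the rest of the argument — including the non-degeneracy-type step used to deduce $\tilde f_\infty\equiv 0$ — transposes verbatim from the increasing case.
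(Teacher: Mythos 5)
Your overall strategy is exactly what the paper intends: the paper gives no separate proof of Lemma \ref{lemma:uniform_decreasing} and simply asserts it follows ``in the same fashion'' as Lemmas \ref{lemma:pohozaev_uniform}, \ref{lemma:u_b_derivative_bound}, \ref{BGNTL7.4} and \ref{BGNTL7.6}, with the layer now at the inner radius $a$. Your contradiction-plus-compactness argument for the first limit (trivial extension, uniform $C^1$-bounds from Lemma \ref{lemma:a_priori_bounds_annulus}, strong $H^1$-convergence, passage to the limit in the Pohozaev identity) and your scheme for the second limit (uniform bound on $\partial_a u_{\mu,-}$ via an auxiliary function $f_\mu$, blow-up at $a$, Green representation, then the integration-by-parts chain of Lemma \ref{BGNTL7.4}) are precisely that transposition, and at the same level of detail as the paper.

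There is, however, one concrete error at exactly the point where care is needed: the boundary data of $f_\mu=\partial_a u_{\mu,-}+u'_{\mu,-}$ are swapped. Differentiating the Neumann conditions $u'_{\mu,-}(a;a,b)=0$ and $u'_{\mu,-}(b;a,b)=0$ with respect to $a$ gives $(\partial_a u_{\mu,-})'(a)=-u''_{\mu,-}(a)$ and $(\partial_a u_{\mu,-})'(b)=0$, hence $f_\mu'(a)=0$ and $f_\mu'(b)=u''_{\mu,-}(b)$, not the other way around. Your claim that $f_\mu'(a)=u''_{\mu,-}(a)=u_{\mu,-}(a)-e^{\mu(u_{\mu,-}(a)-1)}\to u_{\infty,-}(a)$ is false as written: $a$ is the layer point, $u_{\mu,-}(a)>1$, and by the first part of the lemma $e^{\mu(u_{\mu,-}(a)-1)}\sim \mu\,(u_{\infty,-}'(a))^2/2\to+\infty$, so $u''_{\mu,-}(a)\to-\infty$. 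The whole purpose of adding $u'_{\mu,-}$ is to make the Neumann datum homogeneous at the layer endpoint and to push the inhomogeneity to the far endpoint, where it stays bounded: here $f_\mu'(b)=u''_{\mu,-}(b)=u_{\mu,-}(b)-e^{\mu(u_{\mu,-}(b)-1)}\to u_{\infty,-}(b)$ since $u_{\mu,-}(b)<1$ uniformly away from $1$. Correspondingly, the auxiliary function $\varphi$ in the analogue of \eqref{14marse2} must satisfy $\varphi'(a)=0$ and $\varphi'(b)=u_{\infty,-}(b)$, rather than carrying its inhomogeneity at $a$. With this correction (and the blow-up of Lemma \ref{lemma:u_tilde_mu_convergence} recentred at $a$, as you indicate), the rest of your outline transposes as claimed.
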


\begin{theorem}\label{thm:C1convergence}
Fix $0\leq a<b\leq1$. Let $L_{\mu}$ and $L_{\infty}$ be defined as in \eqref{defLmu} and \eqref{defLinf} respectively. For every $\varepsilon>0$ we have
$$
L_\mu (.;a,b)\rightarrow L_\infty (.; a,b)\ in\ C^1(a+\varepsilon ,b-\varepsilon).
$$
\end{theorem}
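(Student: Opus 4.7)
The plan is to obtain $C^1$ convergence by differentiating $L_\mu$ explicitly with respect to $s$, exploiting the Neumann condition at the moving endpoint to kill the ``inner'' derivative, and then invoking the three uniform convergence lemmas already proved (Lemma \ref{lemma:pohozaev_uniform}, Lemma \ref{BGNTL7.6}, and Lemma \ref{lemma:uniform_decreasing}) to control every factor that appears. The $C^0$ convergence on $[a+\varepsilon,b-\varepsilon]$ is immediate from Lemmas \ref{lemma:pohozaev_uniform} and \ref{lemma:uniform_decreasing} applied with the moving endpoint ranging over this compact interval, so the content is really in the derivative.

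For the derivative, by Lemma \ref{lemma:C1_dependence} the maps $s\mapsto u_{\mu,+}(\cdot;a,s)$ and $s\mapsto u_{\mu,-}(\cdot;s,b)$ are $C^1$ in $s$. Differentiating $L_\mu(s;a,b)$ and using the Neumann conditions $u_{\mu,+}'(s;a,s)=u_{\mu,-}'(s;s,b)=0$, the chain rule collapses the boundary derivatives and yields
\begin{equation*}
L_\mu'(s)=\frac{e^{\mu(u_{\mu,+}(s;a,s)-1)}}{\mu}\cdot\mu\,\partial_b u_{\mu,+}(s;a,s)-\frac{e^{\mu(u_{\mu,-}(s;s,b)-1)}}{\mu}\cdot\mu\,\partial_a u_{\mu,-}(s;s,b).
\end{equation*}
Each of the four factors converges uniformly on $[a+\varepsilon,b-\varepsilon]$ by Lemmas \ref{lemma:pohozaev_uniform}, \ref{BGNTL7.6} and \ref{lemma:uniform_decreasing}, so $L_\mu'(\cdot;a,b)$ converges uniformly to
\begin{equation*}
\Lambda(s):=u_{\infty,+}'(s;a,s)\bigl[u_{\infty,+}''(s;a,s)-(u_{\infty,+}'(s;a,s))^2\bigr]-u_{\infty,-}'(s;s,b)\bigl[u_{\infty,-}''(s;s,b)-(u_{\infty,-}'(s;s,b))^2\bigr].
\end{equation*}

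The remaining task, and the only step that is not a direct application of the previous lemmas, is to verify $\Lambda=L_\infty'$. For this I would exploit the characterization of $u_{\infty,+}(r;a,b)$ as the unique solution of $\mathcal{L}u=0$ on $(a,b)$ with $u'(a)=0$ and $u(b)=1$. Differentiating the identity $u_{\infty,+}(b;a,b)=1$ in $b$ gives the boundary value $\partial_b u_{\infty,+}(b;a,b)=-u_{\infty,+}'(b;a,b)$, while differentiating the ODE and the Neumann condition $u'(a)=0$ in $b$ gives $\mathcal{L}(\partial_b u_{\infty,+})=0$ and $(\partial_b u_{\infty,+})'(a)=0$. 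Uniqueness for this linear BVP forces
\begin{equation*}
\partial_b u_{\infty,+}(r;a,b)=-u_{\infty,+}'(b;a,b)\,u_{\infty,+}(r;a,b),
\end{equation*}
and differentiating in $r$ and evaluating at $r=b=s$ yields $\partial_b u_{\infty,+}'(s;a,s)=-(u_{\infty,+}'(s;a,s))^2$. Therefore, for $g(s):=u_{\infty,+}'(s;a,s)$ we obtain $g'(s)=u_{\infty,+}''(s;a,s)-(u_{\infty,+}'(s;a,s))^2$, which is exactly the bracketed factor in $\Lambda$. The analogous computation for the decreasing solution handles the second term, and combining gives $\Lambda(s)=\frac{d}{ds}\bigl[(u_{\infty,+}'(s;a,s))^2/2\bigr]-\frac{d}{ds}\bigl[(u_{\infty,-}'(s;s,b))^2/2\bigr]=L_\infty'(s)$, completing the proof.

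The main obstacle is the identification step: one needs the infinitesimal variation formula for $u_{\infty,+}$ and $u_{\infty,-}$ under perturbation of the moving endpoint. This is where the ``Dirichlet at the source'' character of the limit Green functions is crucial, and it is what forces the clean quadratic factor $-(u_{\infty,+}'(s))^2$ in the total $s$-derivative.
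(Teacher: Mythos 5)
Your argument is correct, and its skeleton is the paper's: the $C^0$ convergence from Lemmas \ref{lemma:pohozaev_uniform} and \ref{lemma:uniform_decreasing}, and the derivative computed by the same chain rule in which the Neumann condition $u_{\mu,+}'(s;a,s)=u_{\mu,-}'(s;s,b)=0$ removes the inner derivative (justified, as you do, by Lemma \ref{lemma:C1_dependence}), after which the uniform convergence of each factor on $[a+\eps,b-\eps]$ is exactly Lemmas \ref{lemma:pohozaev_uniform}, \ref{BGNTL7.6} and \ref{lemma:uniform_decreasing}. The only point where you genuinely depart from the paper is the identification $\Lambda=L_\infty'$: the paper simply quotes \cite[Theorem 7.8]{BonheureGrossiNorisTerracini2015} for the identity $\frac{\partial}{\partial b}\bigl(\tfrac12(u_{\infty,+}'(b;a,b))^2\bigr)=u_{\infty,+}'(b;a,b)\bigl[u_{\infty,+}''(b;a,b)-(u_{\infty,+}'(b;a,b))^2\bigr]$, whereas you re-derive it from the characterization of $u_{\infty,+}(\cdot;a,b)$ as the solution of $\mathcal{L}u=0$ with $u'(a)=0$, $u(b)=1$, obtaining $\partial_b u_{\infty,+}(\cdot;a,b)=-u_{\infty,+}'(b;a,b)\,u_{\infty,+}(\cdot;a,b)$ by linearity and uniqueness of that boundary value problem. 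This is a sound, self-contained substitute: uniqueness is immediate (multiply the difference by $r^{N-1}w$ and integrate), the smooth dependence on $b$ needed to differentiate the limit problem is clear from the explicit $\xi,\zeta$ representation of $u_{\infty,+}$ (Lemma \ref{lemma:appendix1} for $N=2$, the cited proposition for $N\geq3$), and your variation formula is consistent with the paper's own computation $C(b)=-u_{\infty,+}'(b)$ in \eqref{eq:u_b_convergence}--\eqref{eq:C_b}, which the paper obtains there by integral (Pohozaev-type) identities at the level $\mu<\infty$ rather than by differentiating the limit BVP. What your route buys is independence from the external reference for this identity; what the paper's route buys is that the same machinery (Lemma \ref{BGNTL7.4} and its uniform version) is needed anyway for the $\mu$-level statement, so the citation costs nothing extra there.
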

\begin{proof}
Given $\varepsilon>0$, we can take $B_1=a+\varepsilon$, $B_2=b$, $A_1=a$ and $A_2=b-\varepsilon$ in the previous Lemmas.
By Lemmas \ref{lemma:pohozaev_uniform} and \ref{lemma:uniform_decreasing} we have
\[
\lim_{\mu\to0} \sup_{s\in [a+\varepsilon,b-\varepsilon]} |L_\mu (s;a,b)- L_\infty (s; a,b)|=0.
\]
Concerning the derivative, notice that
\[
\dfrac{\partial}{\partial b}\left(\dfrac{e^{\mu(u_{\mu,+}(b;a,b)-1 )}}{\mu} \right)=
\frac{e^{\mu(u_{\mu,+}(b;a,b)-1)}}{\mu} \mu\frac{\partial u_{\mu,+}}{\partial b}(b;a,b).
\]
Using Lemmas \ref{lemma:pohozaev_uniform} and \ref{BGNTL7.6}, we have
\begin{multline}
\lim_{\mu\to\infty} \sup_{b\in [B_1,B_2]} \left|
\dfrac{\partial}{\partial b}\left(\dfrac{e^{\mu(u_{\mu,+}(b;a,b)-1 )}}{\mu} \right) \right. \\
\left. -\dfrac{(u_{\infty,+}^\prime (b;a,b) )^2}{2}  2 \dfrac{u_{\infty,+}^{\prime \prime}(b;a,b)  - (u_{\infty,+}^\prime (b;a,b) )^2}{u_{\infty,+}^\prime (b;a,b) } \right|=0
\end{multline}
On the other side, it is proved in \cite[Theorem 7.8]{BonheureGrossiNorisTerracini2015} that
$$
\dfrac{\partial}{\partial b}\left(\dfrac{(u_{\infty,+}^\prime (b;a,b))^2}{2} \right)=u_{\infty,+}^\prime (b;a,b) \left[u_{\infty,+}^{\prime\prime} (b;a,b) - (u_{\infty,+}^\prime (b;a,b))^2\right].
$$
Since an analogous result holds for $u_{\mu,-}(a;a,b)$, this concludes the proof.
\end{proof}

\begin{corollary}\label{coro:bar_s_continuous}
Given $0\leq\bar{a}<\bar{b}\leq1$, there exists $\varepsilon>0$ and $\bar{\mu}$ such that
\begin{equation}\label{eq:s_bar_mu_regular}
\bar{s}_\mu(a,b) \text{ is of class } C^1 \text{ for } (a,b)\in [\bar{a}-\varepsilon,\bar{a}+\varepsilon,\bar{b}-\varepsilon,\bar{b}+\varepsilon],
\end{equation}
\begin{equation}\label{eq:u_mu1layercontinuous1}
u_{\mu,1-layer}(b;a,b) \text{ is continuous for } (a,b)\in [\bar{a}-\varepsilon,\bar{a}+\varepsilon,\bar{b}-\varepsilon,\bar{b}+\varepsilon],
\end{equation}
\begin{equation}\label{eq:u_mu1layercontinuous2}
u_{\mu,1-layer}(a;a,b) \text{ is continuous for } (a,b)\in [\bar{a}-\varepsilon,\bar{a}+\varepsilon,\bar{b}-\varepsilon,\bar{b}+\varepsilon],
\end{equation}
for every $\mu\geq\bar{\mu}$.
\end{corollary}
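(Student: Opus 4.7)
The plan is to apply the implicit function theorem to the defining equation $L_\mu(s;a,b)=0$ for $\bar{s}_\mu(a,b)$ introduced in Theorem \ref{BGNT6.1}, on a neighborhood of $(\bar{s}_\infty(\bar{a},\bar{b}),\bar{a},\bar{b})$. Three ingredients are needed: joint $C^1$-regularity of $L_\mu$ in $(s,a,b)$, strict positivity of $\partial_s L_\mu$ at $(\bar{s}_\mu,\bar{a},\bar{b})$ for $\mu$ large, and the already established convergence $\bar{s}_\mu(\bar{a},\bar{b})\to\bar{s}_\infty(\bar{a},\bar{b})$.

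First, I would verify that $L_\mu$ is jointly $C^1$ in $(s,a,b)$. Since
\[
L_\mu(s;a,b)=\frac{e^{\mu(u_{\mu,+}(s;a,s)-1)} - e^{\mu(u_{\mu,-}(s;s,b)-1)}}{\mu},
\]
this reduces to the $C^1$-regularity of the partial traces $(s,a)\mapsto u_{\mu,+}(s;a,s)$ and $(s,b)\mapsto u_{\mu,-}(s;s,b)$, which follow by composition from the joint $C^1$-dependence $(r,a,b)\mapsto u_{\mu,\pm}(r;a,b)$ established in Lemma \ref{lemma:C1_dependence} and its counterpart in Theorem \ref{thm:decreasing_sol_existence}.

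Next, I would check that $\partial_s L_\mu(\bar{s}_\mu(\bar{a},\bar{b});\bar{a},\bar{b})\neq 0$ for $\mu$ sufficiently large. The arguments of \cite[Lemma 2.4, Theorem 6.1]{BonheureGrossiNorisTerracini2015}, already invoked in the proof of Theorem \ref{BGNT6.1}, yield strict monotonicity of $L_\infty(\cdot;\bar{a},\bar{b})$ and in fact $\partial_s L_\infty(\bar{s}_\infty;\bar{a},\bar{b})>0$. Combined with the $C^1_{loc}$-convergence $L_\mu\to L_\infty$ on compact subsets of $(\bar{a},\bar{b})$ supplied by Theorem \ref{thm:C1convergence}, and with $\bar{s}_\mu(\bar{a},\bar{b})\to\bar{s}_\infty(\bar{a},\bar{b})$, this gives $\partial_s L_\mu(\bar{s}_\mu(\bar{a},\bar{b});\bar{a},\bar{b})>0$ for every $\mu\geq\bar{\mu}$ with $\bar{\mu}$ large enough. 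The implicit function theorem then provides \eqref{eq:s_bar_mu_regular} on a (possibly smaller) neighborhood of $(\bar{a},\bar{b})$. The continuity statements \eqref{eq:u_mu1layercontinuous1}--\eqref{eq:u_mu1layercontinuous2} follow by writing
\[
u_{\mu,1-layer}(a;a,b)=u_{\mu,+}(a;a,\bar{s}_\mu(a,b)),\qquad u_{\mu,1-layer}(b;a,b)=u_{\mu,-}(b;\bar{s}_\mu(a,b),b),
\]
and composing the continuity of $\bar{s}_\mu$ with the continuous dependence of $u_{\mu,\pm}$ on the boundary points (Lemma \ref{lemma:continuous_dependence} and the analogous statement in Theorem \ref{thm:decreasing_sol_existence}).

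The main obstacle I anticipate is securing the strict inequality $\partial_s L_\infty(\bar{s}_\infty;\bar{a},\bar{b})>0$ in a form that transfers robustly via Theorem \ref{thm:C1convergence}; the underlying computation relies on the explicit Green function representation of $u_{\infty,\pm}$ and on the characterization \eqref{eq:s_bar_def} of $\bar{s}_\infty$ as the unique critical point of $r\mapsto G(r,r;\bar{a},\bar{b})/r^{N-1}$. Once this derivative sign is in hand, the rest of the argument reduces to a standard implicit function theorem application together with composition of the $C^1$/continuity results already at our disposal.
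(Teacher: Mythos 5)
Your proposal is correct and follows essentially the same route as the paper: the implicit characterization $L_\mu(\bar{s}_\mu;a,b)=0$, the strict positivity of $\partial_s L_\infty$ at $\bar{s}_\infty$ (via the results of Bonheure--Grossi--Noris--Terracini, with the appendix lemma covering $N=2$), the $C^1$-convergence of Theorem \ref{thm:C1convergence} together with $\bar{s}_\mu\to\bar{s}_\infty$, the implicit function theorem, and finally composition with the $C^1$/continuous dependence of $u_{\mu,\pm}$ on the boundary points for \eqref{eq:u_mu1layercontinuous1}--\eqref{eq:u_mu1layercontinuous2}. Your explicit remark on the joint $C^1$-regularity of $L_\mu$ in $(s,a,b)$ is a sound elaboration of a point the paper leaves implicit, but it does not change the argument.
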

\begin{proof}
In order to prove \eqref{eq:s_bar_mu_regular}, recall that $\bar{s}_{\mu}$ is implicitly defined by the relation $L_\mu (\bar{s}_{\mu}; a,b)=0$. It is proved in \cite[Theorem 6.1]{BonheureGrossiNorisTerracini2015} that 
\[
\frac{\partial}{\partial s} L_\infty(s;a,b)>0.
\]
The proof therein is for $N\geq 3$, but it can be repeated without changes in the case $N=2$ by making use of Lemma \ref{lemma:appendix1}.
Then Theorem \ref{thm:C1convergence} implies that for every $\varepsilon>0$ there exists $\bar{\mu}$ such that
\[
\frac{\partial}{\partial s} L_\mu(s;a,b)>0 \quad\text{ for } s\in [a-\varepsilon,b+\varepsilon], \quad \mu\geq\bar{\mu}.
\]
Since $\bar{s}_\mu\to \bar{s}_\infty$, which lies in the interior of the interval $(a,b)$, we deduce that
\[
\frac{\partial}{\partial s} L_\mu(\bar{s}_\mu;a,b)>0 
\]
for $\mu$ sufficiently large. The Implicit Function Theorem then implies that $\bar{s}_{\mu}(a,b)$ is locally of class $C^1$.

Next recall that, by definition,
\[
u_{\mu,1-layer}(b;a,b)= u_{\mu,-}(b;\bar{s}_\mu(a,b),b), \quad
u_{\mu,1-layer}(a;a,b)=u_{\mu,+}(a;a,\bar{s}_\mu(a,b)).
\]
Then \eqref{eq:u_mu1layercontinuous1} and \eqref{eq:u_mu1layercontinuous2} follow by combining Lemma \ref{lemma:C1_dependence} with \eqref{eq:s_bar_mu_regular}.
\end{proof}

\begin{corollary}\label{coro:uniform_converge_1_layer}
For every $\varepsilon>0$ we have that $u_{\mu,+}(a;a,\bar{s}_{\mu}(a,b)) \to u_{\infty,+}(a;a,\bar{s}_{\infty}(a,b))$ and  $u_{\mu,-}(b;\bar{s}_{\mu}(a,b),b) \to u_{\infty,-}(b;\bar{s}_{\infty}(a,b),b)$ uniformly in the set $\{\varepsilon<a<a+\varepsilon<b<1-\varepsilon\}$ for every $\mu\geq\bar{\mu}$.
\end{corollary}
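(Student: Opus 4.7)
I plan to argue by contradiction and compactness, mirroring the strategy already used in Lemmas \ref{lemma:pohozaev_uniform}, \ref{BGNTL7.6}, and \ref{lemma:uniform_decreasing}. Let $K$ denote the closure of $\{\varepsilon<a,\,a+\varepsilon<b<1-\varepsilon\}$, which is compact in $\{0<a<b<1\}$. If the claim for the $u_{\mu,+}$ piece failed, there would exist $\delta>0$, a sequence $\mu_n\to\infty$, and points $(a_n,b_n)\in K$ such that
\begin{equation*}
\bigl|u_{\mu_n,+}(a_n;a_n,\bar{s}_{\mu_n}(a_n,b_n))-u_{\infty,+}(a_n;a_n,\bar{s}_\infty(a_n,b_n))\bigr|\geq \delta.
\end{equation*}
By compactness we may assume $(a_n,b_n)\to(a_*,b_*)\in K$; in particular $a_*>0$, so Lemma \ref{lemma:a_priori_bounds_annulus} applies uniformly along the sequence.

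The first step is to upgrade Theorem \ref{thm:C1convergence} to uniformity in the endpoints. I would check that the contradiction arguments of Lemmas \ref{lemma:pohozaev_uniform} and \ref{BGNTL7.6} carry over verbatim when both $a$ and $b$ are allowed to vary in a compact set, since those proofs only use the uniform $C^1$ a priori estimates of Lemmas \ref{lemma:a_priori_bounds}--\ref{lemma:a_priori_bounds_annulus} together with the smooth dependence of $u_{\infty,\pm}(\cdot;a,b)$ on the domain, which is inherited from the continuous dependence of the Green function $G(\cdot;a,b)$. This yields $L_{\mu_n}(\cdot;a_n,b_n)\to L_\infty(\cdot;a_*,b_*)$ uniformly on every compact subinterval of $(a_*,b_*)$. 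Since $L_\infty(\cdot;a_*,b_*)$ is strictly increasing and has a unique interior zero at $\bar{s}_\infty(a_*,b_*)$ (used already in the proof of Theorem \ref{BGNT6.1}), combining the identity $L_{\mu_n}(\bar{s}_{\mu_n}(a_n,b_n);a_n,b_n)=0$ with the continuity of $\bar{s}_\infty$ supplied by Corollary \ref{coro:bar_s_continuous} forces $\bar{s}_{\mu_n}(a_n,b_n)\to \bar{s}_\infty(a_*,b_*)$.

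The second step is to identify the limit of $u_{\mu_n,+}(\cdot;a_n,\bar{s}_{\mu_n}(a_n,b_n))$. By Lemma \ref{lemma:a_priori_bounds_annulus} these functions are uniformly bounded in $C^1$; after trivially extending them to a common annulus as in the proof of Lemma \ref{lemma:continuous_dependence} and applying Arzel\`a--Ascoli, I extract a $C^0$-convergent subsequence with limit $u_*$. On compact subsets of the open interval $(a_*,\bar{s}_\infty(a_*,b_*))$, one has $u_{\mu_n,+}<1-\eta$ eventually (by \eqref{eq:u_mu<1>1} and the monotonicity together with $u_{\mu_n,+}(\bar{s}_{\mu_n})\to 1$, as in Theorem \ref{thm:asymptotic_increasing_sol}), so the exponential term $e^{\mu_n(u_{\mu_n,+}-1)}$ vanishes there. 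Passing to the limit in the radial equation then yields $\mathcal{L}u_*=0$ on $(a_*,\bar{s}_\infty(a_*,b_*))$ with $u_*'(a_*)=0$ and $u_*(\bar{s}_\infty(a_*,b_*))=1$, hence by uniqueness $u_*\equiv u_{\infty,+}(\cdot;a_*,\bar{s}_\infty(a_*,b_*))$. Evaluating the $C^0$-convergence at $r=a_n\to a_*$ and using the joint continuity of $(a,b)\mapsto u_{\infty,+}(a;a,\bar{s}_\infty(a,b))$ contradicts the assumed lower bound $\delta$. The same scheme, invoking Theorem \ref{thm:decreasing_sol_existence} and Lemma \ref{lemma:uniform_decreasing} in place of their counterparts, settles the $u_{\mu,-}$ part.

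The principal technical obstacle is the upgrade of the one-parameter uniform estimates (Lemmas \ref{lemma:pohozaev_uniform}, \ref{BGNTL7.6}, \ref{lemma:uniform_decreasing}) to two-parameter uniformity in $(a,b)$, needed to make the convergence of $L_{\mu_n}(\cdot;a_n,b_n)$ meaningful when both endpoints vary along the sequence. Since those lemmas are already proved by compactness, and the underlying dependences on the endpoints are continuous via the continuous dependence of the limiting Green function and of $u_{\infty,\pm}(\cdot;a,b)$, the required extension is essentially routine but must be carried out carefully so that all error terms in the blow-up analysis at the concentration boundary remain controlled uniformly in $(a,b)\in K$.
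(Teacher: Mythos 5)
Your overall scheme (contradiction plus compactness along sequences $\mu_n\to\infty$, $(a_n,b_n)\to(a_*,b_*)$) is different from the paper's, which never identifies limits along such sequences: the paper instead deduces uniform convergence from pointwise convergence plus equicontinuity in the parameter $a$, the latter coming from a uniform bound on $\partial_a u_{\mu,+}(\cdot;a,\bar s_\mu(a,b))$, which in turn reduces to the already-proved bound of Lemma \ref{lemma:u_b_derivative_bound} together with a uniform bound on $\partial_a \bar s_\mu$ obtained from the implicit relation $L_\mu(\bar s_\mu;a,b)=0$, the $C^1$-convergence of Theorem \ref{thm:C1convergence} and $\partial_s L_\infty>0$. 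Your route could in principle work, but as written it has a genuine gap at its key step.

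The gap is in your Step 2, where you claim that on compact subsets of $(a_*,\bar s_\infty(a_*,b_*))$ one has $u_{\mu_n,+}<1-\eta$ eventually, ``by \eqref{eq:u_mu<1>1} and the monotonicity together with $u_{\mu_n,+}(\bar s_{\mu_n})\to1$''. Monotonicity plus convergence of the value at the right endpoint to $1$ only gives $\sup u_{\mu_n,+}\le 1+o(1)$; it does not give a strict separation below $1$ in the interior. That separation is exactly the statement that the limit profile is strictly below $1$ away from the concentration point, i.e.\ that the limit is the (normalized) Green function and not, say, the constant $1$ --- which is an exact solution of \eqref{eq:u_mu_annulus} for every $\mu$ --- or a function solving $\mathcal{L}u=0$ on a left portion and identically $1$ on a right portion (a corner at the junction is not excluded, since the uniform $C^1$ bound gives no equicontinuity of $u_{\mu_n}'$ where $u_{\mu_n}$ is close to $1$). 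Without this separation you cannot discard the exponential term, so the ODE-uniqueness identification of $u_*$ does not go through, and for a fixed domain the paper excludes these degenerate limits only through the variational characterization (energy levels, Nehari projection, uniqueness of the minimizer) in Theorem \ref{thm:asymptotic_increasing_sol} and Theorem \ref{thm:uniqueness_minimal_energy_sol}. So to make your argument complete you would have to redo that variational identification uniformly in the endpoints (which is essentially the content of the statement you are proving, and is also implicitly what your Step 1 upgrade of Lemmas \ref{lemma:pohozaev_uniform} and \ref{BGNTL7.6} to two varying endpoints already presupposes); a related smaller point is that locally uniform convergence of $L_{\mu_n}(\cdot;a_n,b_n)$ on compact subintervals does not by itself prevent the zeros $\bar s_{\mu_n}(a_n,b_n)$ from drifting to the endpoints, so that step too needs an extra argument (e.g.\ the sign of $L_{\mu_n}$ at two fixed interior points straddling $\bar s_\infty(a_*,b_*)$ together with monotonicity of $L_{\mu_n}$ for large $n$). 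The paper's equicontinuity argument avoids all of this precisely because it never needs to identify limits along sequences with varying endpoints.
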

\begin{proof}
Proceeding as in \cite[Lemma 5.9]{BonheureGrossiNorisTerracini2015}, one can see that to prove that $u_{\mu ,+}(a; a ,s_\mu (a,b))\rightarrow u_{\infty ,+}(a; a s_\infty (a,b))$ uniformly in $a$, one only needs to prove that $u_{\mu ,+}(.; a ,s_\mu (a,b))$ is equicontinuous in $a$, which is implied by
$$\left\|\dfrac{\partial}{\partial a}(u_{\mu ,+}(.; a ,s_\mu (a,b)) ) \right\|_\infty\leq C,$$
for some constant $C$ not depending on $\mu$ and $a$. We already know proceeding as in Lemma \ref{lemma:u_b_derivative_bound} that $\left\|\dfrac{\partial}{\partial a}(u_{\mu ,+}(.; a ,b) ) \right\|_\infty \leq C$. Thus, the result will follow if we can prove that $\left\|\dfrac{\partial}{\partial a} s_\mu (a,b)\right\|_\infty \leq C$. Let us recall that $s_\mu (a,b)$ is defined implicitly by the relation $L_\mu (s_\mu (a,b);a,b)=0$. Taking the derivative with respect to $a$, we find
$$\partial_s L(s_\mu ; a,b) \partial_{a}s_\mu +\partial_a L(s_\mu ;a,b)=0.$$
So
$$\partial_a s_{\mu} = - (\partial_s L(s_\mu ; a,b))^{-1} \partial_a L(s_\mu ;a,b). $$
We have $(\partial_s L(s_\mu ; a,b))^{-1}<\varepsilon^{-1}$ not depending on $\mu$ and $a$ since $\partial_s L_\infty (s_\infty ; a,b)>\tilde{\varepsilon}$ and $L_\mu (.;a,b )\rightarrow L_\infty (. ; a,b)$ uniformly in $C^1(s_\infty -\varepsilon_1 , s_\infty +\varepsilon_1 )$. We also have, proceeding as in Theorem \ref{thm:C1convergence},
$$\left\|\partial_a L(s_\mu ;a,b)\right\|\leq C.$$
The proof follows.
\end{proof}

\subsection{Proof of Theorem \ref{thm:variational}}\label{subsec:klayer}
Given $k\in \N_0$, let
$$
T=\left\{(\b_1,\ldots ,\b_{k-1})\in \R^{k-1}: 0=\b_0<\b_1<\ldots<\b_{k-1}<\b_k=1 \right\},
$$
In each interval $[\b_{j-1},\b_j]$, we consider the 1-layer solution $u_{\mu,1-layer}(r;\b_{j-1},\b_j)$ constructed in Theorem \ref{BGNT6.1} and we denote by $\a_{\mu,j}(\b_{j-1},\b_j)$ its unique maximum point. Recall that
\begin{equation}
\a_{\mu,j}\to \a_{\infty,j}= \a_{\infty,j}(\b_{j-1},\b_j), \quad\text{with}\quad
\left. \left( \frac{G(r,r;\b_{j-1},\b_j)}{r^{N-1}} \right)'\right|_{r=\a_{\infty,j}}=0.
\end{equation}

We first look for the solutions of point (i) of the statement, having $k$ interior maximum points. We aim to find (for $\mu$ sufficiently large) a zero of the function $M_\mu =(M_\mu^{(1)},\ldots , M_\mu^{(k-1)}):T\rightarrow \R^{k-1}$, defined as
$$
M_\mu^{(j)}(\b_1,\ldots , \b_{k-1})
= u_{\mu,1-layer}(\b_j;\b_j,\b_{j+1})-u_{\mu,1-layer}(\b_j;\b_{j-1},\b_j),
$$
for $j=1,\ldots,k-1$. We also define $M_\infty=(M_\infty^{(1)},\ldots , M_\infty^{(k-1)}):T\rightarrow \R^{k-1}$ as
$$M_\infty^{(j)}(\b_1,\ldots , \b_{k-1})
= \dfrac{G(\b_j,\a_{\infty, j+1};\b_j,\b_{j+1})}{G(\a_{\infty, j+1 },\a_{\infty, j+1};\b_j,\b_{j+1})}-\dfrac{G(\b_j,\a_{\infty ,j};\b_{j-1},\b_j)}{G(\a_{\infty ,j},\a_{\infty, j};\b_{j-1},\b_{j})},
$$
for $j=1,\ldots,k-1$.
It is proved in \cite[Theorem 2.14]{BonheureGrossiNorisTerracini2015} that, given $P\in T$, $M_\infty$ is homotopic to $Id-P$ in $T$,. By the excision property of the topological degree, there exists an open set $U$, with $\overline{U}\subset T$, such that
$$
deg (M_\infty,U,0)= 1.
$$
Since $M_\mu \rightarrow M_\infty$ uniformly in $U$ (Corollary \ref{coro:uniform_converge_1_layer}) and $M_\mu$ is continuous in $U$ for $\mu$ sufficiently large (Corollary \ref{coro:bar_s_continuous}), the Rouch\'e's property of the topological degree (see for example \cite[Corollary 3.4.2]{DincaMawhin2009}) yields to
$$
deg (M_\mu,U,0)= 1.
$$
Therefore $M_\mu$ has an interior zero $(\bar{\b},\ldots,\bar{\b}_{k-1}$ and
\[
u= u_{\mu,1-layer}(r;\bar{\b}_{j-1},\bar{\b}_j), \quad r\in (\bar{\b}_{j-1},\bar{\b}_j), \quad j=1,\ldots,k
\]
is the required solution.

In order to find the solutions of point (ii) of the statement, having $k-1$ interior maximum points and one maximum point at $r=1$, we similarly define $\tilde{M}_\mu =(\tilde{M}_\mu^{(1)},\ldots , \tilde{M}_\mu^{(k-1)}):T\rightarrow \R^{k-1}$ as $\tilde{M}_\mu^{(j)}=M_{\mu}^{(j)}$, for $j=1,\ldots , k-2$ and $\tilde{M}_{\mu}^{(k-1)}= u_{\mu,+}(\b_{k-1};\b_{k-1},1)-u_{\mu,1-layer}(b_{k-1};b_{k-2},b_{k-1}) $. It can be proved as in \cite[Theorem 2.14]{BonheureGrossiNorisTerracini2015} that $\tilde{M}_\infty$ is homotopic to $Id-P$, hence we can proceed as above.

\appendix

\section{Green's function in dimension 2}\label{sec:appendix}

In this appendix we prove the analogous of \cite[Proposition 2.1]{BonheureGrossiNorisTerracini2015} in the 2-dimensional case (the case $N=2$ is not treated in \cite{BonheureGrossiNorisTerracini2015}).

\begin{lemma}\label{lemma:appendix1}
There exist two positive, linearly independent solutions $\zeta \in C^2 ((0,1])$ and $\xi \in C^2 ([0,1])$ of the equation
$$-u^{\prime \prime}-\dfrac{1}{r}u^\prime +u=0\ in\ (0,1),$$
satisfying
\begin{equation}\label{eq:xi_zeta_N2}
\xi^\prime (0)=\zeta^\prime (1)=0,\quad r(\xi^\prime (r) \zeta (r) - \xi (r) \zeta^\prime (r))=1, \ \forall r\in (0,1].
\end{equation}
Moreover, $\xi$ is bounded and increasing in $[0,1]$, $\zeta$ is decreasing in $(0,1]$ and 
$$\xi (0)=1,\quad \lim_{r\rightarrow 0^+}\dfrac{\zeta (r)}{-\ln r}=1,\quad \lim_{r\rightarrow 0^+ }(-r\zeta^\prime (r))=1.$$
As a consequence, the Green function defined in \eqref{eq:L_mathcal} (for $N=2$) can be written as follows
\begin{equation}\label{eq:greenN=2}
G(r,s)=\left\{\begin{array}{ll}
s^{N-1}\xi(r)\zeta(s)\quad\text{for }r\leq s \\
s^{N-1}\xi(s)\zeta(r)\quad\text{for }r> s.
\end{array}\right.
\end{equation}
\end{lemma}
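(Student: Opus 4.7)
The equation $-u''-\frac{1}{r}u'+u=0$ is (after rearrangement) the modified Bessel equation of order zero $u''+\frac{1}{r}u'-u=0$, whose two standard linearly independent solutions are $I_0(r)$ (entire, with power series $I_0(r)=\sum_{k\geq 0}(r/2)^{2k}/(k!)^2$) and $K_0(r)$ (singular at $0$, with $K_0(r)=-\ln(r/2)I_0(r)-\gamma I_0(r)+\sum_{k\geq 1}(r/2)^{2k}\psi(k+1)/(k!)^2$). My plan is to build $\xi$ and $\zeta$ explicitly as linear combinations of $I_0$ and $K_0$, and then read off all the stated properties from the known asymptotics and from an ODE-comparison argument.

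For $\xi$, I will take $\xi(r):=I_0(r)$. The series expansion gives immediately $\xi\in C^2([0,1])$, $\xi(0)=1$ and $\xi'(0)=0$. For $\zeta$, I will look for a combination of the form $\zeta(r):=K_0(r)+\alpha I_0(r)$ and fix $\alpha:=-K_0'(1)/I_0'(1)$, which is well defined since $I_0'(1)>0$, and which forces $\zeta'(1)=0$. The asymptotic expansion of $K_0$ near zero gives $\zeta(r)=-\ln r+O(1)$ and $-r\zeta'(r)=1+O(r^2\ln r)$, which yields the two limit statements.

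Next I will establish positivity and monotonicity by a short barrier argument using the ODE itself. If $\xi$ had a first interior critical point $r_0>0$, the equation would force $\xi''(r_0)=\xi(r_0)-r_0^{-1}\xi'(r_0)=\xi(r_0)>0$ (since $\xi>0$ in a neighbourhood of $0$ by continuity and $\xi(0)=1$), so $r_0$ would be a strict local minimum; combined with the local-minimum behaviour at $0$ (where $\xi''(0)=\tfrac12$ from the series), this would force an intermediate local maximum at which the same computation gives $\xi''>0$, a contradiction. Hence $\xi'>0$ on $(0,1]$ and $\xi$ stays positive. The same argument, using $\zeta\to+\infty$ at $0$ and $\zeta'(1)=0$ with $\zeta''(1)=\zeta(1)>0$, shows $\zeta>0$ and $\zeta'<0$ on $(0,1]$.

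For the Wronskian identity, I will use that for any two solutions $u_1,u_2$ of $\mathcal L u=0$ with $N=2$, a direct differentiation of $W(r):=r(u_1'u_2-u_1u_2')$ gives $W'(r)\equiv 0$. Thus $r(\xi'\zeta-\xi\zeta')$ is constant on $(0,1]$ and I will compute this constant by letting $r\to 0^+$, using $\xi(r)\to 1$, $\xi'(r)\to 0$, $\zeta(r)\sim -\ln r$ and $-r\zeta'(r)\to 1$: the term $r\xi'\zeta$ vanishes since $\xi'(r)=O(r)$ and $\zeta=O(\ln r)$, while $-r\xi\zeta'\to 1$, giving the normalisation exactly. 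Finally, the Green function formula \eqref{eq:greenN=2} follows from the standard variation-of-constants construction: the ansatz $G(r,s)=a(s)\xi(r)$ for $r\leq s$ and $G(r,s)=b(s)\zeta(r)$ for $r>s$ automatically satisfies the boundary conditions $\partial_r G(0,s)=\partial_r G(1,s)=0$, and imposing continuity at $r=s$ together with the jump $[\partial_r G](s,s)=-1$ (obtained by integrating $\mathcal L G(\cdot,s)=\delta_s$ against $s^{N-1}$, i.e.\ multiplying by $r=s$) pins down $a(s)=s\zeta(s)$, $b(s)=s\xi(s)$ thanks to the normalised Wronskian. The main obstacle is bookkeeping the asymptotic expansions near $r=0$ accurately enough to verify both the pointwise limits and the constancy of the Wronskian; but because the log term cancels in each product that appears, nothing subtle happens and the computation is direct.
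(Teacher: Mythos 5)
Your plan is correct, but it takes a genuinely different route from the paper in two places. For the singular solution, the paper never invokes $K_0$: it passes to the logarithmic variable $s=\ln r$ and builds the second solution by an explicit reduction-of-order formula $\psi(s)=\varphi(s)\bigl\{\tfrac{1}{\varphi(0)\varphi'(0)}+\int_s^0\varphi(t)^{-2}\,dt\bigr\}$ with $\varphi(s)=I_0(e^s)$; this formula delivers at once the Neumann condition at $1$, the unit Wronskian, the sign $\psi'<0$, positivity, and the logarithmic asymptotics (via L'H\^opital), all self-contained. You instead set $\zeta=K_0+\alpha I_0$ with $\alpha=-K_0'(1)/I_0'(1)$ and import the standard expansions of $K_0$ near $0$ together with the Wronskian identity $r\bigl(I_0'K_0-I_0K_0'\bigr)=1$ (which is unchanged by adding $\alpha I_0$), recovering monotonicity by an ODE sign argument at critical points; this is shorter if one accepts the DLMF facts about $K_0$, which is in the same spirit as the paper's use of $I_0$. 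One small point to tighten: in your barrier argument for $\zeta$ you use $\zeta''(1)=\zeta(1)>0$ before positivity is established, which as written is circular; but since $K_0'(1)<0<I_0'(1)$ gives $\alpha>0$, positivity of $\zeta$ (in particular $\zeta(1)>0$) is immediate from $K_0,I_0>0$, so the gap is cosmetic. For the Green function, you use the classical continuity-plus-jump characterization at $r=s$ (the jump of $\partial_r G$ being $-1$ after multiplying by the weight), pinned down by the normalized Wronskian, whereas the paper verifies directly the weak identity $\int_0^1(\partial_r G\,\varphi'+G\varphi)r^{N-1}dr=s^{N-1}\varphi(s)$ by integration by parts; both are standard and equivalent here, and your computation of the constants $a(s)=s\zeta(s)$, $b(s)=s\xi(s)$ does reproduce \eqref{eq:greenN=2} for $N=2$.
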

\begin{proof}
Let $\xi (r):= I_0 (r)$ be the modified Bessel function of the first kind (see ($10.25$) of \cite{bessel}). It is well-known that $\xi$ is positive, bounded, increasing, and that $\xi^\prime (0)=0,\ \xi(0)=1$. Let $s=\ln r$, $s\in (-\infty ,0]$ and $\varphi (s)= \xi (e^s)$. In this new variable, we have that
$$
\begin{cases}
\varphi^{\prime \prime}+\varphi e^{2s}=0 \quad \text{ in } (-\infty ,0)\\
\displaystyle\lim_{s\rightarrow -\infty} \varphi (s)=1, \quad
\displaystyle\lim_{s\rightarrow -\infty} \varphi^\prime (s)=0,\\
\varphi (0)=\xi (1)>0, \quad \varphi^\prime (0)=\xi^\prime (1)>0.
\end{cases}
$$
We set 
$$\psi (s) := \varphi (s) \left\{\dfrac{1}{\varphi (0) \varphi^\prime (0)}+ \int_s^0 \dfrac{1}{\varphi^2 (t)}dt \right\}.$$
By direct calculations one can check that $\psi^\prime (0)=0$, 
\begin{equation*}
-\psi^{\prime \prime}+e^{2s}\psi =0,  \text{ and } 
\varphi^\prime (s) \psi (s) -\psi^\prime (s) \varphi (s) =1,
\end{equation*}
for every $s\in (-\infty ,0)$.
We also have $\psi^\prime (s)= - \int_s^0 \psi(t) e^{2t} dt <0$. Moreover, the relation $\displaystyle\lim_{s\rightarrow -\infty} \varphi (s)=1$ implies 
$$
\lim_{s\rightarrow -\infty} \psi^\prime (s)=-1,
$$ 
and using L'hospital rule,
$$
\lim_{s\rightarrow -\infty} \dfrac{\psi (s)}{s}=\lim_{s\rightarrow -\infty}\psi^\prime (s)=-1.
$$
Thus letting $\zeta (r):= \psi (\ln r)$, we have all the claimed properties.

Finally, in order to prove \eqref{eq:greenN=2}, we have to show that, for all $\varphi \in C^\infty([0,1])$,
\begin{equation}\label{eq:Green_delta}
\int_0^1 \left(\frac{\partial G}{\partial r}(r,s) \varphi'(r)+ G(r,s)\varphi(r) \right)r^{N-1} \,dr =s^{N-1} \varphi(s).
\end{equation}
By the defintion $G$ in \eqref{eq:greenN=2}, the left hand side of \eqref{eq:Green_delta} rewrites as
\begin{multline*}
\int_0^s \left( \xi'(r)\varphi'(r) +  \xi(r) \varphi(r) \right) \zeta(s) s^{N-1} r^{N-1} \,dr \\
+ \int_s^1 \left( \zeta'(r)\varphi'(r) + \zeta(r) \varphi(r) \right) \xi(s) s^{N-1} r^{N-1} \,dr.
\end{multline*}
We integrate by parts and we use the equation satisfied by $\xi$ and $\zeta$ and the respective boundary conditions, to obtain
\begin{multline*}
\int_0^s s^{N-1} \zeta(s) \varphi(r) \left[-(r^{N-1}\xi'(r))'+\xi(r)r^{N-1} \right] \,dr + s^{2N-2}\xi'(s)\zeta(s)\varphi(s) \\
+\int_s^1 s^{N-1} \xi(s) \varphi(r) \left[-(r^{N-1}\zeta'(r))'+\zeta(r)r^{N-1} \right] \,dr - s^{2N-2}\xi(s)\zeta'(s)\varphi(s) \\
=s^{2N-2}\xi'(s)\zeta(s)\varphi(s) - s^{2N-2}\xi(s)\zeta'(s)\varphi(s).
\end{multline*}
Then using \eqref{eq:xi_zeta_N2} we obtain $s^{N-1} \varphi(s)$, so that \eqref{eq:Green_delta} is proved.
\end{proof}


\end{document}